\DeclareMathOperator*{\argmin}{arg\,min}
\DeclareMathOperator*{\argmax}{arg\,max}
\newcommand{\norm}[1]{\left \lVert #1 \right \rVert}
\newcommand{\N}{\mathbb{N}}
\newcommand{\R}{\mathbb{R}}
\newcommand{\F}{\mathcal{F}}
\newcommand{\E}{\mathbb{E}}
\newcommand{\pr}{\mathbb{P}}
\renewcommand{\Pr}{\pr}
\newcommand{\NDist}{\mathcal{N}}
\newcommand{\BDist}{\mathcal{B}}
\newcommand{\policy}{\mathcal{A}}
\newcommand{\Nmean}{\mu_{\NDist}}
\newcommand{\Bmean}{\mu_{\BDist}}
\newcommand{\Bstd}{\sigma_{\BDist}}
\newcommand{\Nstd}{\sigma_{\NDist}}
\newcommand{\maxAlloc}{A_{\max}}
\newcommand{\Nmax}{N_{\max}}
\newcommand{\Bmax}{B_{\max}}
\newcommand{\ALG}{{Bang-Bang}\xspace}
\newcommand{\Static}{\textsf{Static}\xspace}
\DeclarePairedDelimiterX{\infdivx}[2]{(}{)}{%
  #1\;\delimsize\|\;#2%
}
\newcommand{\ind}[1]{\mathds{1}\left\{#1\right\}}
\newcommand{\Exp}[1]{\mathbb{E} \left[ #1 \right]} 
\newcommand{\Var}[1]{\mathrm{Var} \left[ #1 \right]} 
\newcommand{\Ttaulow}{\widetilde{\tau}_1}
\newcommand{\Ttauhigh}{\widetilde{\tau}_2}
\newcommand{\Ind}[1]{\mathds{1}\left\{ #1 \right\}}
\newcommand{\Deff}{\Delta_{\text{Efficiency}}\xspace}
\newcommand{\Dfair}{\Delta_{\text{Fair}}\xspace}
\newcommand{\Norm}{\textup{Normal}}
\newcommand{\Exponential}{\textup{Exponential}}
\newcommand{\Poisson}{\textup{Poisson}}
\newcommand{\EG}{\textsf{EG}}
\newcommand{\Yplusk}{Y_+^{(k)}}
\newcommand{\Yminusk}{Y_{-}^{(k)}}
\newcommand{\Bk}{B^{(k)}}
\DeclarePairedDelimiter{\floor}{\lfloor}{\rfloor}
\renewenvironment{proof}[1][Proof]{%
  \par\noindent{\itshape #1.} \ignorespaces
}{%
  \hfill\Halmos\par
}
\renewcommand{\paragraph}[1]{\subsubsection*{#1}}
\begin{document}


\RUNAUTHOR{Onyeze et al.}

\RUNTITLE{Sequential Fair Allocation With Replenishments}

\TITLE{Sequential Fair Allocation With Replenishments:\\ A Little Envy
Goes An Exponentially Long Way}

\ARTICLEAUTHORS{%

\AUTHOR{Chido Onyeze}
\AFF{School of Computer Science,
Cornell University, \EMAIL{aco59@cornell.edu}}

\AUTHOR{Sean R. Sinclair}
\AFF{Department of Industrial Engineering and Management Sciences,
Northwestern University, \EMAIL{sean.sinclair@northwestern.edu}}

\AUTHOR{Chamsi Hssaine}
\AFF{Department of Data Sciences and Operations, University of Southern California, Marshall School of Business, \EMAIL{hssaine@usc.edu}}

\AUTHOR{Siddhartha Banerjee}
\AFF{School of Operations Research and Information Engineering,
Cornell University, \EMAIL{sbanerjee@cornell.edu}}

} 

\ABSTRACT{%
We study the trade-off between envy and inefficiency in repeated resource allocation settings with stochastic replenishments, motivated by real-world systems such as food banks and medical supply chains. Specifically, we consider a model in which a decision-maker faced with stochastic demand and resource donations must trade off between an {\it equitable} and {\it efficient} allocation of resources over an infinite horizon. The decision-maker has access to storage with fixed capacity $M$, and incurs efficiency losses when storage is empty (stockouts) or full (overflows). We provide a nearly tight (up to constant factors) characterization of achievable envy-inefficiency pairs. Namely, we introduce a class of Bang-Bang control policies whose inefficiency exhibits a sharp phase transition, dropping from $\Theta(1/M)$ when $\Delta = 0$ to $e^{-\Omega(\Delta M)}$ when $\Delta > 0$, where $\Delta$ is used to denote the target envy of the policy. We complement this with matching lower bounds, demonstrating that the trade-off is driven by {\it supply}, as opposed to {\it demand} uncertainty. Our results demonstrate that envy-inefficiency trade-offs not only persist in settings with dynamic replenishment, but are shaped by the decision-maker's available capacity, and are therefore qualitatively different compared to previously studied settings with fixed supply.
}%




\KEYWORDS{Dynamic resource allocation, fairness, envy-inefficiency trade-off, inventory management, stochastic control} 

\maketitle


\section{Introduction}
\label{sec:intro}

In many shared resource allocation settings --- food banks, power systems, medical supply chains, donation centers, etc. --- a decision-maker faced with stochastic supply and demand seeks to achieve two objectives: allocating as much supply as she has on-hand, while also serving the agents sharing the resource as equitably as possible (equivalently, minimizing the {\it envy} agents incur as a result of inequitable allocations).  In attempting to achieve these objectives, the decision-maker is faced with two sources of inefficiency: stockouts in periods where demand exceeds supply, and overflow in periods where supply exceeds demand. For example, soon after the start of COVID lockdowns, Feeding America reported that $98\%$ of food banks experienced an increase in demand for food assistance, with
$59\%$ of these having low inventory levels~\citep{feedingamerica2020}.
More recently, in the aftermath of the January 2025 wildfires in Los Angeles, 
the overwhelming influx of clothing donations impeded relief efforts due to inventory overflow~\citep{guardian2025wildfires}. 
These high-profile examples reflect the persistent challenges faced in shared resource environments, and underscore the need for equitable allocation policies that are robust to these two sources of inefficiency.

The trade-off between envy and inefficiency has been extensively studied of late, with many works characterizing the Pareto frontier between these objectives under various models~\citep{bertsimas2011price,lien2014sequential,donahue2020fairness,manshadi2021fair,sinclair2021sequential,banerjee2024online,liao2022nonstationary,yin2022optimal}. 
Building on this line of work, this paper examines how stochasticity in \emph{supply} (e.g., incoming donations in the social-good applications described above) shapes a capacitated decision-maker's allocation decisions in dynamic settings. In particular, stochastic incoming supply (also referred to as {\it inventory replenishments}) introduces an important new consideration to existing studies of envy-inefficiency trade-offs: while the decision-maker must allocate conservatively enough to prevent stock-outs, she must also allocate aggressively enough to avoid inventory overflow due to the accumulation of incoming inventory. As a result, perfectly equitable solutions that are not adaptive to current inventory levels are bound to be inefficient, either via frequent stockouts or overflows.

Against this backdrop, this work seeks to answer the following research questions:
\begin{center}
{\em 
Do established envy-inefficiency trade-offs in dynamic environments persist in the presence of stochastic supply with capacity constraints? Can we design policies that achieve the optimal trade-off?
}
\end{center}

\subsection{Main Contributions}
\label{ssec:setting}

We consider a model in which a decision-maker allocates divisible resources between agents arriving online over an infinite horizon. The decision-maker has access to a storage of size $M$. We first consider a setting with a single resource and homogeneous agents. At the beginning of period $t$, the decision-maker receives a stochastic resource donation. She then observes a random number of new agents, each requesting a share of the resource, and chooses how much to allocate to each agent. If the total amount allocated to agents exceeds the amount of available inventory (an event we refer to as a {\it stockout}), we assume the decision-maker covers the additional amount required using an outside option (at a cost). On the other hand, if the amount of remaining inventory exceeds the available storage capacity $M$, any overflow must also be thrown out, similarly at a cost.

The decision-maker evaluates the quality of her policy according to the following three metrics: (i) ex-post envy $\Dfair$, which represents the difference between the maximum and minimum allocations across all rounds, (ii) the overflow (or waste) $\overline{W}$, which represents the long-run average amount of resource thrown out at the end of each period due to overflow, and (iii) the stockout rate $\overline{V}$, representing the long-run average amount of resource that needs to be acquired from an outside option due to stockouts. Given $\overline{W}$ and $\overline{V}$, we define a combined inefficiency metric $\Deff = h\overline{W}+b\overline{V}$, where $h,b$ respectively denote the per-unit overflow/stockout costs.

\begin{figure}[t]
  \centering
    \includegraphics[scale=0.4]{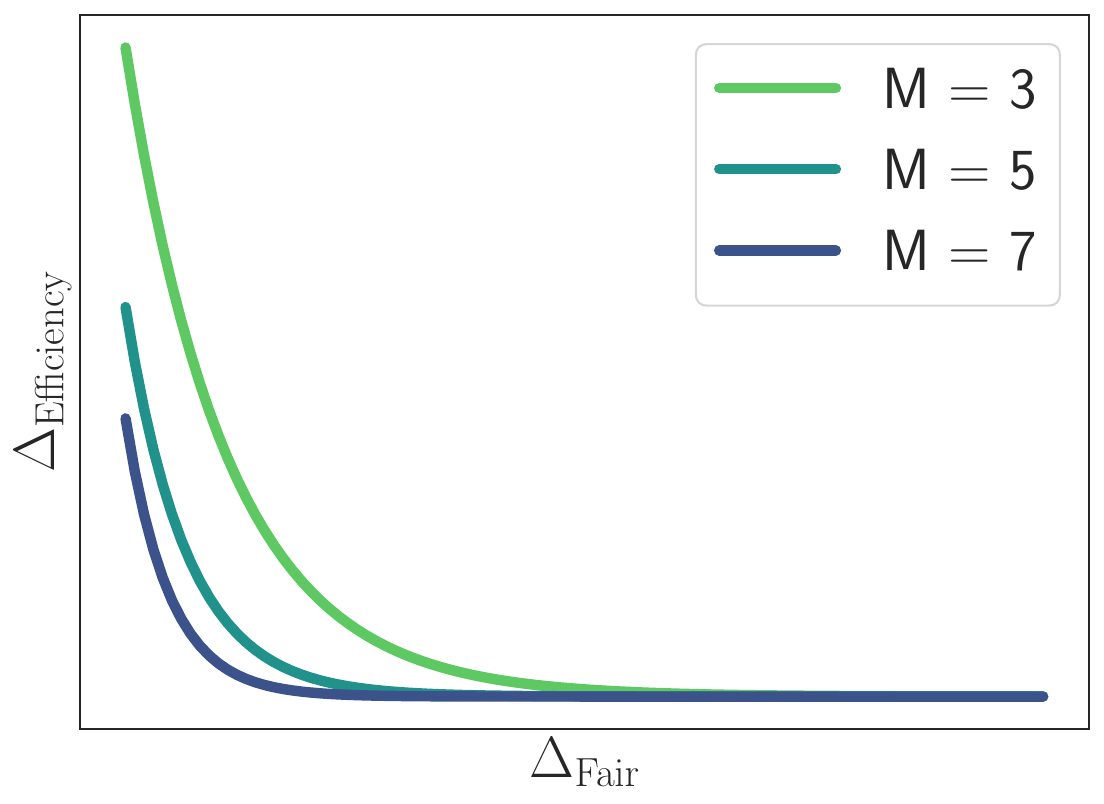}
    \caption{$\Deff$ vs $\Dfair$ for various values of $M$. Each curve illustrates the theoretical lower bound on the achievable inefficiency as a function of $\Dfair$. Even modest increases in $\Dfair$ lead to exponential reductions in inefficiency. Moreover, for fixed $\Dfair$, increasing capacity (darker lines) improves efficiency.}
    \label{fig:intro}
\end{figure}

Our main contribution is to demonstrate that the trade-off between envy and inefficiency exhibits a {\it sharp phase transition} in capacitated settings with stochastic replenishments. In particular:
\begin{enumerate}[leftmargin=0.5cm]
\item We first demonstrate the inefficiency of static allocation policies by showing that the natural \emph{proportional allocation rule} that allocates {the expected donation per agent in each period} (thereby incurring $\Dfair = 0$) leads to $\Deff$ scaling as {$\Theta(M^{-1})$} (\cref{thm:Inefficient_for_fix_0_mean}). This is furthermore the best scaling for {\it any} static allocation, with all other static allocation policies incurring \mbox{$\Deff = \Theta(1)$} (\Cref{thm:Inefficient_for_fix_non-0_mean}).
\item We subsequently show that introducing a limited amount of envy generates {\it exponential} gains in efficiency. Namely, we propose a simple Bang-Bang policy parameterized by $\Dfair > 0$, which over- \mbox{(resp., under-) allocates by $\frac{\Dfair}{2}$, depending on the current inventory level}. We prove that this algorithm achieves
$\Deff = e^{-\Omega(\Dfair M)}$ (\cref{thm:bang-bang_inefficiency_bound}).
\item We prove that the above trade-off is optimal in terms of scaling
(\cref{thm:main_lower_bound}):
even under deterministic demand, any policy that guarantees $\Dfair>0$ must incur {$\Deff =  e^{-O(\Dfair M)}$}. 
\end{enumerate}
Finally, we show that our results naturally extend to settings with multiple resources and heterogeneous agents, thereby demonstrating the universality of our scaling laws.

From a managerial perspective, our results (illustrated in~\Cref{fig:intro}) underscore the central role of capacity constraints in shaping fairness-efficiency trade-offs in real-world settings. In particular, not only do larger capacities directly mitigate inefficiency in perfectly fair settings (since it naturally takes longer for inventory levels to deplete, under a fixed allocation), but they also magnify the efficiency gains achievable from modest relaxations in fairness. These findings highlight the importance of explicitly incorporating capacity considerations into the design of fair allocation policies.

\paragraph{Paper organization.} We next survey the related literature.  In \cref{sec:model} we introduce our basic model, and provide intuition for our results and associated technical challenges in \Cref{ssec:warmup}.  
\cref{sec:achievability} presents our main technical results.  We demonstrate the inefficiency of static allocation policies in \Cref{ssec:fixed_policy}, propose and analyze our Bang-Bang policy in \cref{ssec:Bang-Bang_Policy}. We moreover establish its optimality by providing lower bounds on the envy-inefficiency trade-off in \cref{sec:impossibility}. We extend the model to multiple resources and heterogeneous agents in~\cref{sec:extensions}. Finally, we complement our theoretical results with numerical experiments in \cref{sec:simulations}, and conclude in \Cref{sec:conclusion}.
\subsection{Related Work}
\label{ssec:relwork}

The study of fair resource allocation has deep roots in economics and operations research~\citep{varian1973equity,rawls2017theory,scarf1957min}. We focus here on fair allocation in dynamic settings; see \citet{aleksandrov2019online} for a comprehensive survey of other settings.

\paragraph{Fair resource allocation.} Dynamic fair allocation typically falls into two categories: (i) a fixed set of agents with online resource arrivals~\citep{aleksandrov2015online,mattei2018fairness,gorokh2020fair,bansal2020online,bogomolnaia2022fair,ijcai2019-49,aziz2016control,zeng2019fairness,friedman_2017}, and (ii) a fixed resource budget with agents arriving online. Our work fits into the latter stream, where past works have considered alternative fairness objectives, such as Nash Social Welfare~\citep{azar2010allocate,gorokh2020fair}, max-min fairness~\citep{lien2014sequential,manshadi2021fair}, and variants of envy-freeness~\citep{friedman_2017,cole_2013,10.1145/2764468.2764495,freund2025fair}. Closest to our work are \citet{sinclair2021sequential}, which studied envy-inefficiency trade-offs under a fixed budget, and \citet{banerjee2024online}, which extended this setting to perishable resources.
Similar to these works, our goal is to characterize the trade-off between envy and inefficiency. However, our setting differs fundamentally due to stochastic supply, which causes the decision-maker's capacity constraints to become central to the envy-inefficiency trade-off. Moreover, stochastic replenishments align our model with classical inventory frameworks, naturally leading to an infinite-horizon formulation. As a result, our algorithmic and analytical tools diverge significantly from prior work.

\paragraph{Trade-offs in queueing control.}
Our model builds on the design of fair control policies in queuing systems, beginning with the foundational work of \citet{kelly1998rate}. Much of this literature focuses on \emph{proportional fairness} and, more generally, $\alpha$-fair objectives that balance equity and efficiency \citep{abbas2015fairness,raz2004resource}. Our emphasis on explicit trade-offs is also related to staffing and capacity-sizing results in the Halfin–Whitt regime \citep{halfin1981heavy}.  There, many-server scaling balances two opposing performance metrics: server idleness (inefficiency) and customer delay (a fairness-related service disparity).  In contrast, we study a fixed-capacity (finite-buffer) setting where inefficiency is determined when the inventory level hits the boundaries $\{0,M\}$.  This also has connections to large-buffer asymptotics and large deviations~\citep{ganeshbig}. In our analysis, we show that the Bang-Bang policy induces a ``rate-function'' for the stationary distribution at $\{0,M\}$ that decays as $e^{-\Theta(\Delta M)}$.

A related thread is the quality-of-service (QoS) control literature, which designs scheduling and routing policies to meet deadlines, while ensuring throughput and stability. Foundational work integrates delay, throughput, and reliability constraints~\citep{hou2009theory}, followed by extensions to multi-hop networks with end-to-end deadlines, and predictive scheduling \citep{singh2018throughput,chen2018timely}. We view this line of work as complementary: whereas QoS models trade deadline/latency against throughput and reliability, our model trades ex-post envy against boundary inefficiency under fixed capacity and no waiting.
\section{Preliminaries}
\label{sec:model}

In this section we present our basic model, focusing on the setting with a single resource and homogeneous agents, deferring the extension to multiple resources and heterogeneous agents in \Cref{sec:extensions}.

\paragraph{Model Primitives.} We consider a decision-maker who manages a warehouse (also referred to as a {\em store}) of a single divisible resource over an infinite horizon of discrete time periods.  The store has a capacity $M > 0$ and initially contains $S_0 \in [0,M]$ units of the resource. At the start of period $t \in \N$, the decision-maker receives an additional budget of $B_t \in \R_{\geq 0}$ resources (also referred to as a {\em donation}); moreover, $N_t\in \N$ homogeneous agents arrive, each requesting a share of the resource.
The decision-maker must then decide on a per-agent allocation $A_t \in [0,\maxAlloc]$, where $\maxAlloc \in \mathbb{R}_{> 0}$ is a finite upper bound on the maximum possible allocation in each period.\footnote{The assumption that each individual receives the same allocation $A_t$ follows from the fact that agents are homogeneous~\citep{varian1973equity}.} We let $S_{t-1} \in [0,M]$ be the inventory level at the start of round $t$. When the total allocation exceeds the post-donation inventory level (i.e., $S_{t-1}+B_t < N_tA_t$), the decision-maker supplements the allocation via an outside option (at a cost). On the other hand, if the post-allocation inventory level exceeds the store's capacity $M$, any excess is thrown away. The store's inventory level is updated at the end of round $t$ via the following inductive relation: 
\begin{equation}
\label{eq:state_dynamics}
S_{t} = (S_{t-1} + B_t - N_t A_t)\Big|_0^{M},
\end{equation}
where we use the notation $x|_0^{M} = \max\{0,\min\{x,M\}\}$ to denote the projection of $x$ in $[0,M]$. For ease of notation, we define the {\it inventory drift} in round $t$ to be $Z_t = B_t-N_tA_t$, noting that $S_{t} = \left.(S_{t-1} + Z_t)\right|_0^{M}$, by \Cref{eq:state_dynamics}. For ease of notation, we let $Z_{\min} = \maxAlloc\Nmax$ and $Z_{\max} = \Bmax$, noting that $Z_t \in [-Z_{\min},Z_{\max}]$ for all $t$.

We assume $B_t$ is drawn i.i.d. from a distribution $\BDist$ supported on a subset of $[0, B_{\max}]$, and let $\Bmean {> 0}$ denote the mean of $\BDist$, known to the decision-maker. Similarly, $N_t$ is drawn i.i.d. from a distribution $\NDist$ supported on a subset of $\{0,\ldots,N_{\max}\}$, and let $\Nmean {> 0}$ denote its known mean.\footnote{The assumption that $N_t$ is discrete and both $N_t$ and $B_t$ are bounded is for technical simplicity. Our results extend naturally for any $\NDist$ and $\BDist$ which are sub-Gaussian.} Finally, we assume $\BDist$ and $\NDist$ are independent of $M$, and that $\BDist$ has strictly positive variance $\Bstd^2$.

\begin{remark}
The assumption that the supply and demand processes are i.i.d. across time is for analytical tractability. In Section \ref{sec:experiments_time_varying} we numerically demonstrate that our envy-inefficiency laws continue to hold under seasonal supply and demand patterns that one expects to observe in practice.
\end{remark}

\paragraph{Policies.} We define an allocation policy $\mathcal{A}$ to be a mapping from the current round $t$, the realized supply and demand $B_t$ and $N_t$, and the current inventory level $S_{t-1}$, to a per-individual allocation quantity $A_t$.
Policy $\mathcal{A}$ is said to be {\it time-homogeneous} if 
it only depends on the current round $t$ through the inventory level $S_{t-1}$ and the realized demand and supply. 
Our achievability results all involve time-homogeneous policies, while our impossibilities hold under arbitrary policies.

\paragraph{Measures of Envy and Inefficiency.}  Our aim is to understand the trade-off between {\it envy} (unfairness) and {\it inefficiency}, as described in the introduction. 
To formalize this trade-off, we measure the {\it unfairness} of any policy $\mathcal{A}$ via its \emph{ex-post} --- or {\it hindsight} --- {\it envy}, in line with prior literature~\citep{varian1973equity,sinclair2021sequential,banerjee2024online}.
\begin{definition}[Ex-post envy]
\label{def:fairness}
The \emph{ex-post envy} of a sequence of allocations $(A_t, t\in \N)$ is given by:
\begin{equation}
\Dfair \;=\; \sup_{t,t': N_t, N_{t'} > 0} \, |A_t - A_{t’}|.
\end{equation}
\end{definition}

This metric is deliberately stringent: it bounds the disparity between any two agents \emph{anywhere} along the sample path, rather than on average, or with time-discounting. We choose this worst-case metric since average-case metrics can obscure persistent disparities or biases in allocations.  For instance, a food bank that distributes equal amounts on most days but shuts down entirely on specific days may exhibit low average envy, but still leaves certain populations (such as those only available on closure days) underserved.

This standard metric of unfairness also helps to motivate the necessity of supplementing (at cost) the allocation in periods of stockout. It is easy to see that if the algorithm is restricted to not allocate anything in periods where the inventory level is zero, it necessarily has $\Dfair = \Omega(1)$; such events are likely to occur over sample paths with prolonged spells of zero donations or surge demand.
This helps motivate our inefficiency metric, which is in line with standard models of inventory control~\citep{scarf1957min}.
In particular, we define the {\it overflow} (or {\it waste}) $W_t$ incurred in round $t$ as the amount of resources lost due to the inability to store more resources than capacity $M$. Formally:
\begin{equation}\label{eq:waste}W_t = (S_{t-1} + B_t - N_t A_t - M)^+,
\end{equation}
where $(\cdot)^+ = \max\{\cdot, 0\}$ is used to define the positive part. We moreover define the {\it stockout quantity} $V_t$ in round $t$ to be the additional resource the decision-maker needs to acquire in order to allocate $A_t$. Formally:
\begin{equation}\label{eq:subsidy}V_t = (S_{t-1}+B_t-N_t A_t)^-,\end{equation}
where $(\cdot)^- = -\min\{\cdot, 0\}$.
With these definitions in hand, we can define our measure of inefficiency.
\begin{definition}[Inefficiency]
\label{def:efficiency}
Given per-unit overflow cost $h \in \mathbb{R}_{> 0}$ and per-unit stockout cost $b \in \mathbb{R}_{> 0}$, the \emph{inefficiency} of a sequence of allocation decisions $(A_t, t\in \N)$ is given by:
\begin{equation}
\label{eq:deff}
\Deff = \limsup_{T \rightarrow \infty} \left( \frac{1}{T} \sum_{t=1}^T hW_t + bV_t \right) =  h\overline{W} + b\overline{V},
\end{equation}
where $\overline{W}$ and $\overline{V}$ respectively denote the long-run average overflow and stockouts under $(A_t, t \in \mathbb{N})$, i.e.:
\begin{equation*}
\overline{W} = \limsup_{T \rightarrow \infty} \frac{1}{T} \sum_{t=1}^T W_t, \quad \overline{V} = \limsup_{T \rightarrow \infty} \frac{1}{T} \sum_{t=1}^T V_t.
\end{equation*}
\end{definition}

\begin{remark}
While our measure of inefficiency is natural for the applications motivating our work, our analysis provides explicit bounds on the inventory level's hitting times of states $\{0,M\}$, as well as the stationary probabilities of these states. Thus, a corollary of our analysis is a scaling law for {\it any} inefficiency metric that is a function of these states.
\end{remark}

\paragraph{Envy-Inefficiency Trade-off.}  We say that a policy achieves the \emph{optimal envy-inefficiency trade-off} if it lies on the Pareto frontier of the two metrics $(\Deff,\Dfair)$, 
defined as all pairs $(\Deff,\Dfair)$ that are the solution to the following optimization problem:
\begin{equation}
\label{eq:goal}
    \min_{A_t, t \in \mathbb{N}} \Deff \quad \text{s.t. } \quad \Dfair \leq \Delta \,\text{ almost surely},
\end{equation}
where $\Delta \geq 0$ can be thought of as an ``unfairness budget'' given to the allocation policy. It is easy to see that, for any $\Delta \geq 0$, there exists an optimal solution to \Cref{eq:goal} such that $\Dfair = \Delta$. As a result, in the remainder of this work we use $\Delta$ and $\Dfair$ interchangeably. 

Note that a trivial part of the Pareto frontier is obtained by considering $\Delta \rightarrow +\infty$, in which case one optimal policy is to simply set $A_t = (S_{t-1}+B_t) / N_t$ (i.e., to allocate the entirety of the available inventory) whenever $N_t>0$.
This results in the minimum possible $\Deff \approx 0$, where the inefficiency corresponds to the average amount by which the total incoming budget exceeds $M$ over a stretch of periods with zero demand. Our focus is thus on the scaling behavior of $\Deff$ as a function of $M$ and $\Delta$.
\section{Envy-Inefficiency Scaling Laws: Intuition and Technical Challenges}
\label{ssec:warmup}

Our key observation in the derivation of our envy-inefficiency scaling laws is that the long-run average overflow (respectively, stockout quantity) is proportional to the long-run average fraction of time for which $S_t = M$ (resp., $S_t = 0$). Therefore, it suffices to characterize the steady-state probability that $S_t \in \{0,M\}$. In general, however, $S_t$ is a complex Markov chain over a continuous state space; consequently, there are few standard tools for characterizing its steady-state measure. In this section, we build some intuition into the form of our final envy-inefficiency scaling laws by considering more tractable processes that approximate the inventory dynamics of our real system at a high level. We then preview the technical challenges in translating these insights to the real dynamics.

\paragraph{Understanding achievable scaling laws via a birth-death process.} 
We begin by illustrating our bounds on $\Deff$ via a stylized birth–death process, where a birth represents the incoming budget exceeding the allocated quantity, and a death represents the case where the allocation exceeds the incoming budget. While this simplified model does not directly correspond to the inventory dynamics of our system, it captures some of its key structural features. Hence, one would expect a similar scaling. In this section we overload notation and let $S_t$ be this stylized birth-death process, with discrete state space $\{0, 1, \ldots, M\}$. Moreover, $Z_t \in \{-1,0,1\}$ is now used to denote the drift of this birth-death process.

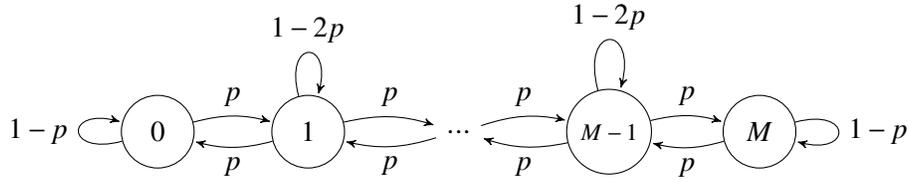
\begin{figure}[!t]
    \centering
\scalebox{1}{
    \begin{tikzpicture}[->,>=stealth',shorten >=1pt]
    \node[state] (1) at (0,0) {$0$};
    \node[state] (2) at (2,0) {$1$};
    \node (3) at (4,0) {$...$};
    \node[state] (4) at (6,0) {\footnotesize $M-1$};
    \node[state] (5) at (8,0) {$M$};
        \draw (1) edge[bend left=15] node[above] {$p$} (2);
        \draw (1) edge[loop left] node{$1-p$} (1);
        \draw (2) edge[bend left=15] node[below] {$p$} (1);
        \draw (2) edge[bend left=15] node[above] {$p$} (3);
        \draw (3) edge[bend left=15] node[below] {$p$} (2);
        \draw (3) edge[bend left=15] node[above] {$p$} (4);
        \draw (4) edge[bend left=15] node[below] {$p$} (3);
        \draw (4) edge[bend left=15] node[above] {$p$} (5);
        \draw (5) edge[bend left=15] node[below] {$p$} (4);
        \draw (5) edge[loop right] node{$1-p$} (5);
        \draw (2) edge[loop above] node{$1-2p$} (2);
        \draw (4) edge[loop above] node{$1-2p$} (4);
\end{tikzpicture}}
    \caption{Markov chain representation of the birth-death process $S_t$ for $\Dfair = 0$. 
    }
    \label{fig:gamblers ruin chain}
\end{figure}

Fix $p \in (0,1/2)$, and consider the birth-death process  shown in~\Cref{fig:gamblers ruin chain}, formally defined by:
\begin{align*}
\begin{cases}
\pr(Z_t = 1 \mid S_{t-1}) = \pr(Z_t = -1 \mid S_{t-1}) = p \qquad &\text{if } S_{t-1} \in \{1,2,\ldots, M-1\}\\
\pr(Z_t=1 \mid S_{t-1})=0 \quad &\text{if } {S}_{t-1}=M \\
\pr({Z}_t=-1 \mid S_{t-1})=0 \quad &\text{if } {S}_{t-1}=0.
\end{cases}
\end{align*}

Note that $\Exp{Z_t \mid S_{t-1}} = 0$ whenever $S_{t-1} \in \{1, \ldots, M-1\}$.  This setting can be viewed as approximating the true inventory level under the static allocation rule \mbox{$A_t = \Bmean / \Nmean$} for all $t \in \mathbb{N}$, under which $\mathbb{E}[Z_t \mid S_{t-1}] = 0$ as well, and for which $\Dfair = 0$. It is easy to verify that, for any $p \leq 1/2$, the stationary distribution $\pi$ of the ${S}_t$ birth-death Markov chain is given by $\pi[s] = \frac{1}{M+1}$ for all $s\in\{0,1,\ldots,M\}$. Since the long-run average overflow (respectively, stockout quantity) is proportional to the fraction of time for which ${S}_t=M$ (resp., ${S}_t=0$), we obtain $\Deff = \Theta(1 / M)$ when $\Dfair=0$.

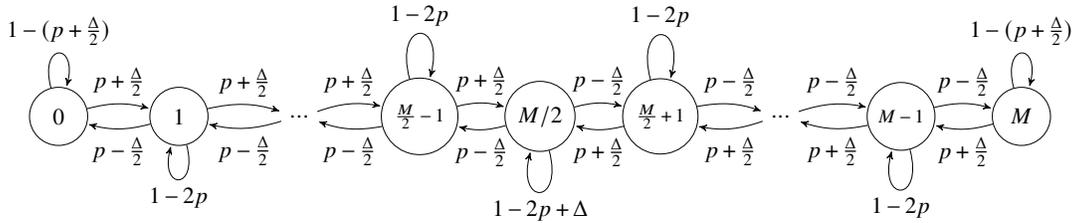
\begin{figure}[!t]
    \centering
\scalebox{.8}{
\begin{tikzpicture}[->,>=stealth',shorten >=1pt]
    \node[state] (1) at (0,0) {$0$};
    \node[state] (2) at (2,0) {$1$};
    \node (3) at (4,0) {$...$};
    \node[state] (4) at (6,0) {\footnotesize $\frac{M}{2}-1$};
    \node[state] (5) at (8,0) {$M/2$};
    \node[state] (6) at (10,0) {\footnotesize $\frac{M}{2}+1$};
    \node (7) at (12,0) {$...$};
    \node[state] (8) at (14,0) {\footnotesize$M-1$};
    \node[state] (9) at (16,0) {$M$};
        \draw (1) edge[bend left=15] node[above] {$p+\frac{\Delta}{2}$} (2);
        \draw (1) edge[loop above] node{$1-(p+\frac{\Delta}{2})$} (1);
        \draw (2) edge[bend left=15] node[below] {$p-\frac{\Delta}{2}$} (1);
        \draw (2) edge[bend left=15] node[above] {$p+\frac{\Delta}{2}$} (3);
        \draw (3) edge[bend left=15] node[below] {$p-\frac{\Delta}{2}$} (2);
        \draw (3) edge[bend left=15] node[above] {$p+\frac{\Delta}{2}$} (4);
        \draw (4) edge[bend left=15] node[below] {$p-\frac{\Delta}{2}$} (3);
        \draw (4) edge[bend left=15] node[above] {$p+\frac{\Delta}{2}$} (5);
        \draw (5) edge[bend left=15] node[below] {$p-\frac{\Delta}{2}$} (4);
        \draw (5) edge[bend left=15] node[above] {$p-\frac{\Delta}{2}$} (6);
        \draw (6) edge[bend left=15] node[below] {$p+\frac{\Delta}{2}$} (5);
        \draw (6) edge[bend left=15] node[above] {$p-\frac{\Delta}{2}$} (7);
        \draw (7) edge[bend left=15] node[below] {$p+\frac{\Delta}{2}$} (6);
        \draw (7) edge[bend left=15] node[above] {$p-\frac{\Delta}{2}$} (8);
        \draw (8) edge[bend left=15] node[below] {$p+\frac{\Delta}{2}$} (7);
        \draw (8) edge[bend left=15] node[above] {$p-\frac{\Delta}{2}$} (9);
        \draw (9) edge[bend left=15] node[below] {$p+\frac{\Delta}{2}$} (8);
        \draw (9) edge[loop above] node{$1-(p+\frac{\Delta}{2})$} (9);
        \draw (2) edge[loop below] node{$1-2p$} (2);
        \draw (4) edge[loop above] node{$1-2p$} (4);
        \draw (5) edge[loop below] node{${1-2p+\Delta}$} (5);
        \draw (6) edge[loop above] node{$1-2p$} (6);
        \draw (8) edge[loop below] node{$1-2p$} (8);
    \end{tikzpicture}
}
    \caption{Markov chain representation of the approximation of the birth-death process ${S}_t$ under a Bang-Bang policy for which $\Dfair = \Delta > 0$.
    }
    \label{fig:biased_gamblers_ruin_chain}
\end{figure}

Consider now the case where $\Dfair \in (0,2p)$. By allowing our policy to incur envy, we can now vary the allocations (and hence the inventory drift) in order to drive the system away from the boundary states $\{0,M\}$.  To do so, we construct a modified birth-death chain, shown in \Cref{fig:biased_gamblers_ruin_chain}. Specifically, for this Markov chain, for $S_{t-1} \in (M/2,M)$:
\begin{align*}
\begin{cases}
\pr({Z}_t=1 \mid S_{t-1})&=p-\frac{\Dfair}{2}\\
\pr({Z}_t=-1 \mid S_{t-1})&=p+\frac{\Dfair}{2}.
\end{cases}
\end{align*}
Moreover, for $S_{t-1} \in (0, M/2)$, we have:
\begin{align*}
\begin{cases}
\pr({Z}_t=1 \mid S_{t-1})&=p+\frac{\Dfair}{2} \\
\pr({Z}_t=-1 \mid S_{t-1})&=p-\frac{\Dfair}{2}.
\end{cases}
\end{align*}
Finally, at $S_{t-1} = M/2$, we have $\mathbb{P}(Z_t = 1 \mid S_{t-1}) = \mathbb{P}(Z_t = -1 \mid S_{t-1}) = p-\frac{\Dfair}{2}$.

Note that $\Exp{Z_t \mid S_{t-1}} = \Dfair > 0$ for $S_{t-1}  \in (0,M/2)$, and $\Exp{Z_t \mid S_{t-1}} = -\Dfair < 0$ {for $S_{t-1} \in (M/2,M)$}, inducing $M/2$-reversion in the drift of the birth-death process.  This setting can be viewed as approximating the true inventory level under a state-dependent allocation rule $A_t$ that perturbs $\Bmean / \Nmean$ by $\pm \Dfair/2$, as our \ALG policy will do in \cref{ssec:Bang-Bang_Policy}.

Solving for the stationary distribution $\pi$ in this case,
we obtain $\pi[0] = \pi[M] = \left(2\cdot\frac{\rho^{M/2}-1}{\rho-1}+\rho^{M/2}\right)^{-1}$, where $\rho = \frac{2p+\Dfair}{2p-\Dfair}$. (We refer the reader to Appendix \ref{mc:deriv} for a derivation of this fact.) Noting that, for constant $p \in (0,1/2)$ and small $\Dfair > 0$, $\rho = e^{\Theta(\Dfair)}$, we obtain \mbox{$\pi[0] = \pi[M] = e^{-\Theta(\Dfair M)}$}. As a result, $\Deff = e^{-\Theta(\Dfair M)}$.

\paragraph{Overview of technical challenges.} While the above heuristic arguments provide some intuition into the form of our envy-inefficiency trade-off, the technical challenge lies in formally establishing these results for the actual inventory dynamics, and under a wide range of demand and supply distributions. We briefly highlight some of the main technical ideas.

The main difficulty in bounding the overflow $\overline{W}$ and stockout quantity $\overline{V}$ under the true inventory dynamics stems from the structure of the process $S_t$ (as defined in~\Cref{eq:state_dynamics}).
This process forms a Markov chain that evolves continuously within the interior $(0, M)$ but has absorbing states $\{0, M\}$, making it challenging to analyze or characterize the steady-state probabilities at these boundaries using standard techniques.
We eschew this difficulty by using a novel renewal-reward argument to express the steady-state mass of states $0$ and $M$ in terms of their hitting times and hitting probabilities. We then derive bounds for these quantities under the static and Bang-Bang policies we consider via a careful martingale analysis.

\section{Envy-Inefficiency Trade-offs under Time-Homogeneous Allocation Policies}
\label{sec:achievability}

As described in \Cref{sec:model}, the overflow and stockout quantities $\overline{W}$ and $\overline{V}$ are the driving measures of inefficiency in our setting. In this section, we develop a general framework for analyzing these for arbitrary time-homogeneous allocation policies, leveraging tools from renewal theory. Building on this, we characterize the extreme point of the envy-inefficiency trade-off when $\Dfair = 0$. We then introduce a simple class of adaptive control policies  parameterized by a target fairness bound $\Delta$, and use our framework to demonstrate that a limited amount of adaptivity generates exponential gains in efficiency. 

\subsection{Bounds on Overflow and Stockout}
\label{ssec:timehomogen}

We first establish the intuitive fact that the long-run average waste and stockout quantity respectively grow with the fraction of time the warehouse is full (i.e., $S_t = M$) and empty (i.e., $S_t = 0$). For ease of notation, we let $H_M = \lim_{T \rightarrow \infty}\frac{1}{T}\sum_{t=1}^T \ind{S_{t} = M}$ and $H_0 = \lim_{T \rightarrow \infty}\frac{1}{T}\sum_{t=1}^T \ind{S_{t} = 0}$ under any policy $\mathcal{A}$. Additionally, for any $\alpha \in [0,A_{\max}]$, we let $Z_t(\alpha) = B_t - \alpha N_t$ be the drift in round $t$ for any policy that allocates $\alpha$ in that round.

\begin{restatable}{proposition}{MainCorr}\label{Cor:Main_expression_for_inefficiency}
Consider any allocation policy such that $A_t = \alpha_M$ if $S_{t-1} = M$, and $A_t = \alpha_0$ if $S_{t-1} = 0$, for some $\alpha_M, \alpha_0 > 0$.  If $H_M$ and $H_0$ exist, then:
\begin{align*}
{\Exp{Z_1(\alpha_M)^+} H_M \leq \overline{W} \leq Z_{\max} \cdot H_M \quad \text{and} \quad \Exp{Z_1(\alpha_0)^-} H_0 \leq \overline{V} \leq Z_{\min} \cdot H_0.}
\end{align*}
\end{restatable}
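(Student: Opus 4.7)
The plan is to establish the four inequalities by combining (i) deterministic pointwise bounds that relate $W_t$ and $V_t$ to the boundary indicators $\ind{S_t = M}$ and $\ind{S_t = 0}$, with (ii) a strong law of large numbers for a bounded martingale difference sequence that decouples the incoming supply/demand $(B_t, N_t)$ from the history-dependent event $\{S_{t-1} = M\}$ (resp.\ $\{S_{t-1} = 0\}$).

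For the upper bounds, I would first observe that $W_t > 0$ requires $S_{t-1} + Z_t > M$, which by the dynamics in \eqref{eq:state_dynamics} forces $S_t = M$; moreover, since $S_{t-1} \leq M$, we have $W_t \leq B_t \leq Z_{\max}$ whenever $W_t > 0$. This yields the pointwise bound $W_t \leq Z_{\max} \ind{S_t = M}$; summing, dividing by $T$ and letting $T \to \infty$ gives $\overline{W} \leq Z_{\max} H_M$. The bound $\overline{V} \leq Z_{\min} H_0$ follows symmetrically from $V_t \leq N_t A_t \leq Z_{\min}$ and $V_t > 0 \Rightarrow S_t = 0$.

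For the lower bounds, the key observation is that when $S_{t-1} = M$ the policy plays $A_t = \alpha_M$, so $Z_t = Z_t(\alpha_M)$ and $W_t = (M + Z_t(\alpha_M) - M)^+ = Z_t(\alpha_M)^+$; thus pointwise $W_t \geq Z_t(\alpha_M)^+\, \ind{S_{t-1} = M}$. To evaluate the Cesàro limit of the right-hand side, I would introduce
\[
X_t \;=\; \bigl(Z_t(\alpha_M)^+ - \Exp{Z_1(\alpha_M)^+}\bigr)\, \ind{S_{t-1} = M},
\]
which is a martingale difference w.r.t.\ $\F_{t-1} = \sigma(B_1, N_1, \ldots, B_{t-1}, N_{t-1})$: the indicator $\ind{S_{t-1}=M}$ is $\F_{t-1}$-measurable, while $(B_t, N_t)$ is independent of $\F_{t-1}$ and i.i.d., so the conditional mean of the bracketed term vanishes. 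Since $|X_t| \leq 2 Z_{\max}$, the strong law for bounded martingale differences (e.g.\ via Azuma--Hoeffding plus Borel--Cantelli) gives $\tfrac{1}{T}\sum_{t=1}^T X_t \to 0$ almost surely. Combining this with the elementary fact that $\tfrac{1}{T}\sum_{t=1}^T \ind{S_{t-1}=M}$ and $\tfrac{1}{T}\sum_{t=1}^T \ind{S_{t}=M}$ differ by at most $1/T$ (so both share the limit $H_M$) yields $\overline{W} \geq \Exp{Z_1(\alpha_M)^+}\, H_M$. The bound $\overline{V} \geq \Exp{Z_1(\alpha_0)^-}\, H_0$ follows by the mirror-image argument, using $V_t = Z_t(\alpha_0)^-$ whenever $S_{t-1} = 0$.

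The main obstacle is the decoupling step: although the sequence $\{S_{t-1}\}$ depends on the full history of $(B_s, N_s)_{s<t}$ through the (arbitrary) adaptive policy, the freshness of $(B_t, N_t)$ is precisely what allows the time-average of $Z_t(\alpha_M)^+ \ind{S_{t-1}=M}$ to factor into the product $\Exp{Z_1(\alpha_M)^+} H_M$. The martingale SLLN is the probabilistic engine that makes this "naive" factorization rigorous even in the absence of Markovian regeneration at the boundary.
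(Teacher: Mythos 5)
Your proposal is correct and follows essentially the same route as the paper: the same pointwise bounds $Z_t(\alpha_M)^+\ind{S_{t-1}=M} \le W_t \le Z_{\max}\ind{S_t=M}$ (and their stockout analogues), and the same centering $X_t = (Z_t(\alpha_M)^+ - \Exp{Z_1(\alpha_M)^+})\ind{S_{t-1}=M}$ handled by a strong law for bounded martingale differences (the paper cites a martingale SLLN where you invoke Azuma--Hoeffding plus Borel--Cantelli, which is equivalent here). Your explicit remark that the Ces\`aro averages of $\ind{S_{t-1}=M}$ and $\ind{S_t=M}$ share the same limit is a detail the paper passes over silently, but there is no substantive difference in the argument.
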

We provide some intuition for \Cref{Cor:Main_expression_for_inefficiency}, deferring a formal proof to Appendix \ref{apx:aux-res}. The upper bound on $\overline{W}$ follows from the fact that waste is only incurred in periods where $S_t = M$; in these periods, the amount thrown away is the excess over $M$, i.e., $S_{t-1}+B_t-N_tA_t-M \leq Z_{\max}$. The lower bound, on the other hand, follows from the fact that, in periods where $S_{t-1} = M$, the waste incurred is given by $(B_t-N_tA_t)^+ = Z_1(\alpha_M)^+$, by definition. The bounds on $\overline{V}$ follow similar lines. Namely, a stockout only occurs in periods $t$ such that $S_t = 0$; in these periods, the maximum amount by which demand can exceed the available budget is $Z_{\min}$. Moreover, whenever $S_{t-1} = 0$, the policy allocates $\alpha_0$ by assumption, incurring an underage of $Z_1(\alpha_0)^-$.

Both the static and Bang-Bang policies we analyze in the remainder of this section satisfy the condition that a fixed amount is allocated at the boundaries.  Hence, it suffices to characterize $H_M$ and $H_0$ in order to obtain tight bounds on the inefficiency of these policies.
Toward this goal, we introduce the following notation:
\begin{itemize}
    \item For any initial state $S \in [0, M]$, we let $\tau(S) = \inf\left\{t > 0 : S_{t} \in \{0, M\} \mid S_0 = S \right\}$ be the first time $S_t$ hits $0$ or $M$, and define $E(S) = \mathbb{E}[\tau(S)]$. Additionally, let {$\tau_M(S) = \inf\{t > 0 : S_t = M \mid S_0 = S\}$ and $\tau_0(S) = \inf\{t > 0 : S_t = 0 \mid S_0 = S\}$ respectively denote the first time the process reaches $M$ or $0$, starting from state $S$.}
    \item We let $p_M = \pr(S_{\tau(M)} = M \mid S_0 = M)$ be the probability that, given initial state $M$, $S_t$ reaches $M$ before 0. Similarly  $p_0 = \pr(S_{\tau(0)} = 0 \mid S_0 = 0)$ is used to denote the probability that $S_t$ reaches 0 before $M$, given that initial state 0.
\end{itemize}

The following theorem, one of the main drivers of our results, relates $H_M$ and $H_0$ to the hitting times introduced above.

\begin{restatable}{theorem}{InefficiencyExpression} 
\label{Thm:Main_expression_for_inefficiency}
{Fix a time-homogeneous policy $\mathcal{A}$ such that $E(M) < \infty$, $E(0) < \infty$, $p_M \in (0,1)$ $p_0 \in (0,1)$, and $\tau_M(S_0)$ and $\tau_0(S_0)$ are finite almost surely.  Then $H_M$ and $H_0$ exist almost surely, and satisfy:}
\begin{equation}
\label{eq:hit_count}
    H_M = \frac{1}{E(M) + E(0)\cdot \frac{1-p_{M}}{1-p_{0}}},\;\;\;\;\;\;\;\;\;\;\;\;\;\; \text{and} \;\;\;\;\;\;\;\;\;\;\;\;\;\; H_0 = \frac{1 }{E(M)\cdot \frac{1-p_{0}}{1-p_{M}} + E(0)}.
\end{equation} 
\end{restatable}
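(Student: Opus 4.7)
The plan is to reduce the problem to the stationary behavior of an embedded Markov chain obtained by sampling $S_t$ at successive hits of the boundary set $\{0, M\}$, and then invoke a renewal-reward argument. Define $T_0 = \inf\{t \geq 0 : S_t \in \{0, M\}\}$ and recursively $T_{k+1} = \inf\{t > T_k : S_t \in \{0, M\}\}$, and set $Y_k = S_{T_k}$. The hypothesis that $\tau_M(S_0)$ and $\tau_0(S_0)$ are a.s.\ finite ensures $T_0 < \infty$ a.s., while the fact that $E(M), E(0) < \infty$, combined with the strong Markov property (which holds because $\mathcal{A}$ is time-homogeneous and $(B_t, N_t)$ are i.i.d.), implies that each inter-hitting time $\xi_k := T_k - T_{k-1}$ is a.s.\ finite with conditional mean $E(M)$ or $E(0)$ depending on $Y_{k-1}$. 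Hence $(Y_k)_{k \geq 0}$ is a Markov chain on $\{0, M\}$ whose transition matrix has diagonal entries $(p_M, p_0)$ and off-diagonal entries $(1-p_M, 1-p_0)$.

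Since $p_M, p_0 \in (0, 1)$, both states have positive self-loop probability, so $(Y_k)$ is irreducible and aperiodic. Solving the balance equations yields the unique stationary distribution $\pi_M = (1-p_0)/[(1-p_M)+(1-p_0)]$ and $\pi_0 = (1-p_M)/[(1-p_M)+(1-p_0)]$. Applying the ergodic theorem to $(Y_k)$, together with the pathwise SLLN applied to the reward sequence $(\xi_k)$ (which has uniformly bounded conditional mean), gives the a.s.\ limits
\begin{equation*}
\frac{1}{n} \sum_{k=1}^n \ind{Y_k = M} \longrightarrow \pi_M \quad \text{and} \quad \frac{1}{n} \sum_{k=1}^n \xi_k \longrightarrow \pi_M E(M) + \pi_0 E(0).
\end{equation*}
Letting $N(T) = \max\{k \geq 0 : T_k \leq T\}$, a standard renewal inversion then yields $N(T)/T \to [\pi_M E(M) + \pi_0 E(0)]^{-1}$ a.s.

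The conclusion follows from the observation that $S_t \in (0, M)$ strictly between consecutive boundary hits, so $\sum_{t=1}^T \ind{S_t = M} = \sum_{k : T_k \leq T} \ind{Y_k = M}$ up to an $O(1)$ boundary term. Dividing by $T$, taking $T \to \infty$, and combining the two displays above yields
\begin{equation*}
H_M \;=\; \lim_{T \to \infty} \frac{N(T)}{T} \cdot \frac{1}{N(T)} \sum_{k=1}^{N(T)} \ind{Y_k = M} \;=\; \frac{\pi_M}{\pi_M E(M) + \pi_0 E(0)} \;=\; \frac{1}{E(M) + \frac{1-p_M}{1-p_0} E(0)},
\end{equation*}
and the formula for $H_0$ follows by swapping the roles of $M$ and $0$.

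The main obstacle I anticipate is the careful bookkeeping needed to transfer the SLLN from the embedded chain index $k$ to the long-run time average over $t$. This amounts to verifying that (i) the initial transient $T_0$ contributes $o(T)$, which follows from $T_0 < \infty$ a.s., and (ii) the terminal fragment $T - T_{N(T)}$ is also $o(T)$, which follows because $\xi_k$ has bounded conditional mean, so by a Wald-type argument $T_{N(T)}/T \to 1$ a.s. Both corrections are standard in renewal theory but must be written out explicitly since $(\xi_k)$ is not i.i.d.\ but only conditionally so given $(Y_{k-1})$.
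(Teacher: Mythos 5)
Your proof is correct and reaches the same formula, but it is organized around a genuinely different decomposition than the paper's. The paper defines a single renewal cycle as ``start at $M$, hit $0$, return to $M$,'' observes that the number of $M$-visits per cycle is $1$ plus a geometric random variable with success probability $1-p_M$ (giving $\E[R_1]=\frac{1}{1-p_M}$), and computes the expected cycle length via Wald's equation as $\frac{E(M)}{1-p_M}+\frac{E(0)}{1-p_0}$ before applying the renewal--reward theorem once. You instead pass to the embedded two-state Markov chain $(Y_k)$ at successive boundary hits, solve its balance equations for $\pi_M/\pi_0=(1-p_0)/(1-p_M)$, and recover $H_M=\pi_M/(\pi_M E(M)+\pi_0 E(0))$ by a semi-Markov (Markov-reward) time-average argument; the two expressions agree. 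Your route is the more systematic one and generalizes immediately to boundary sets with more than two states, while the paper's is more self-contained, needing only the elementary geometric computation and Wald rather than the ergodic theorem for Markov chains. The one step you should tighten is the a.s.\ convergence $\frac{1}{n}\sum_{k\le n}\xi_k \to \pi_M E(M)+\pi_0 E(0)$: ``uniformly bounded conditional mean'' is not by itself a sufficient hypothesis for a pathwise SLLN of dependent, possibly heavy-tailed summands. The clean fix is to regenerate at the return times of $(Y_k)$ to state $M$, note that the cycle sums of $\xi_k$ are i.i.d., nonnegative, and of finite mean (by Wald, since the cycle length in $k$ is geometric and $E(M),E(0)<\infty$), and then invoke the ordinary renewal--reward theorem --- at which point your argument essentially contains the paper's cycle structure as a subroutine. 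You correctly flag this issue yourself, and the remaining bookkeeping (the $o(T)$ initial and terminal fragments, and the identity $\sum_{t\le T}\ind{S_t=M}=\sum_{k:T_k\le T}\ind{Y_k=M}+O(1)$) is handled appropriately.
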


\Cref{Thm:Main_expression_for_inefficiency} highlights the driving forces behind $H_M$ and $H_0$. Namely, it describes the intuitive phenomenon that policies which take longer to reach either boundary (via an increase in either $E(M)$ or $E(0)$) will spend less time at these boundaries. Additionally, it highlights the saliency of the quantity $\frac{1-p_M}{1-p_0}$. In words, $1-p_M$ corresponds to the probability that $S_t$ reaches 0 before $M$ given initial state $M$; $1-p_0$ corresponds to the probability that $S_t$ reaches $M$ before 0 given initial state 0. For higher values of $\frac{1-p_M}{1-p_0}$, the inventory level drifts away from $M$ and toward 0, resulting in a smaller (resp., larger) fraction of time spent at $M$ (resp., 0). This relationship also underpins our envy-inefficiency scaling laws: policies that slow the drift toward \(0\) or \(M\) (lengthening \(E(0)\) and \(E(M)\)) and that avoid heavily favoring one boundary over the other (letting \(p_0 \approx p_M\)) reduce both stockouts and overflows, at the cost of occasionally larger disparities in per-period allocations.

We defer the full proof of \Cref{Thm:Main_expression_for_inefficiency} to Appendix \ref{apx:h-thm}, and provide a proof sketch below.

\begin{proof}[Proof sketch] The result follows from a careful application of the Renewal Reward theorem~\citep[Chapter 10]{grimmett2020probability}. 
Since $\tau_M(S_0) < \infty$ almost surely, we can assume without loss of generality that the starting state $S_0 = M$.  We prove the result for $H_M$; the argument for $H_0$ is analogous.

A natural way to express $H_M$ would be to decompose the horizon into cycles which start at $S_t = M$ and end at the smallest $t' > t$ for which $S_{t'} = M$. While this can yield succinct expressions for $H_M$, it turns out that the distribution of these cycles is difficult to analyze directly for the allocation policies we study. We overcome this difficulty by decomposing the horizon into inventory-dependent cycles that begin at $M$, hit zero, and return to $M$.
In more detail, we define a sequence of stopping times $0 = \tau_0,\tau_1,\tau_2,\ldots$ such that the $n$-th renewal cycle of the process begins in round $\tau_{n-1}$ and ends in round $\tau_n-1$, with
\[\tau_n = \inf\{t > \tau_{n-1}: S_t = M \text{ and there exists } \tau_{n-1} < t' < t \text{ such that } S_{t'} = 0\}.\]
Note that $S_t$ may hit 0 or $M$ multiple times within a cycle. One such cycle is depicted in \Cref{fig:renewal}.

\begin{figure}[t]
\centering
\scalebox{0.5}{
\input{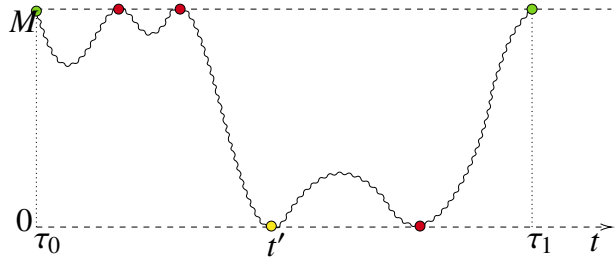}
}
\caption{{Renewal cycle used in proof of \Cref{Thm:Main_expression_for_inefficiency}. The first cycle begins $\tau_0 = 0$ with $S_{\tau_0}=M$; the inventory level hits $0$ for the first time at $t'$, and then returns to $M$ for the first time afterwards at $\tau_1$.}}
\label{fig:renewal}
\vspace{-0.5em}
\end{figure}

Let $L_n = \tau_{n} - \tau_{n-1}$ be the length of the $n$-th renewal cycle, and $R_n = \sum_{t=\tau_{n-1}}^{\tau_n-1} \ind{S_t = M}$ be the number of times in which $S_t = M$ in the cycle. (For instance, $R_1 = 3$ in \Cref{fig:renewal}.) Since $\policy$ is time-homogeneous, 
$(L_n, R_n)$ form a renewal-reward process. By the Renewal Reward Theorem, we then have
\begin{equation}\label{eq:renewal-reward_expression}
    \lim_{T \rightarrow \infty}\frac{\sum_{t=1}^T\mathds{1}\{S_t = M\}}{T} \stackrel{a.s.}{=} \frac{\E[R_1]}{\E[L_1]}.
\end{equation}
Thus, it suffices to compute $\E[R_1]$ and $\E[L_1]$. 

Our first key observation is that $R_1-1$ is geometrically distributed. To see this, consider the counting process that increments each time $S_t \in \{0,M\}$, and define a success to be any trial for which $S_t = 0$, an event which occurs with probability $1-p_M$. Then, $R_1-1$ corresponds to the number of failures before first success, and is therefore geometric with mean $\frac{p_M}{1-p_M}$. Adding back the first time $S_t = M$ (which occurs at the beginning of the cycle, by assumption), we obtain $\E[R_1] = \frac{1}{1-p_M}$.
\end{proof}

\medskip 
Note that $S_t$ is a reflected random walk with barriers at 0 and $M$. In order to bound $E(M), E(0), p_0$ and $p_M$, we consider the ``unreflected'' counterpart of $S_t$, which we denote by $Q_t$ and which is coupled to $S_t$ prior to $\tau(S)$. The following lemma establishes that it suffices to focus on this process in the remainder of our analysis; this will allow us to leverage martingale arguments in bounding these four quantities of interest. We defer its straightforward proof to Appendix \ref{apx:aux-res}.

\begin{restatable}{lemma}{RandomWalkEquivalence}
\label{lem:Random_walk_equivalence_1}
Let $Q_t = \sum_{s=1}^t Z_s + S_0$, and define stopping time \mbox{$\tau = \inf\{t > 0: Q_t \not\in (0, M)\}$}. Then, for any $S \in [0, M]$, $E(S) = \E[\tau \mid S_0 = S]$. Furthermore, \mbox{$p_M = \pr(Q_{\tau} \geq M \mid S_0 = M)$} and $p_0 = \pr(Q_{\tau} \leq 0 \mid S_0 = 0)$.
\end{restatable}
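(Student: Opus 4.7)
The plan is to exhibit a coupling that makes $S_t$ and $Q_t$ coincide path-by-path up until the first exit time, from which the three claimed equalities follow immediately. Since the policy is time-homogeneous, the increment $Z_t = B_t - N_t A_t$ is a deterministic function of $(B_t, N_t, S_{t-1})$; I construct $Q_t$ on the same probability space using the very increments $\{Z_s\}$ produced by the reflected dynamics, so that both processes share the same driving noise as long as their states agree.

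Next, I would argue by induction on $t$ that $S_t = Q_t$ for every $t < \tau$. The base case is $S_0 = Q_0$ by definition. For the inductive step, the hypothesis gives $S_{t-1} = Q_{t-1}$, and because $t - 1 < \tau$ we have $Q_{t-1} \in (0, M)$. Therefore $S_{t-1} + Z_t = Q_{t-1} + Z_t = Q_t$; and since $t < \tau$ forces $Q_t \in (0, M)$, the projection $(\cdot)|_0^M$ in~\eqref{eq:state_dynamics} acts as the identity, yielding $S_t = Q_t$. In particular $S_t \in (0, M)$ for all $t < \tau$, so the reflected process has not yet reached the boundary by time $\tau - 1$.

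It remains to examine time $\tau$. Applying the update once more with $S_{\tau-1} = Q_{\tau-1}$ gives $S_\tau = (Q_\tau)|_0^M$; since $Q_\tau \notin (0, M)$, the projection sends $Q_\tau$ to $0$ if $Q_\tau \leq 0$ and to $M$ if $Q_\tau \geq M$, so $S_\tau \in \{0, M\}$. Combined with the previous paragraph, this establishes $\tau(S_0) = \tau$ almost surely, which after taking expectations yields $E(S) = \mathbb{E}[\tau \mid S_0 = S]$ for any $S \in [0, M]$. Setting $S_0 = M$, the event $\{S_{\tau(M)} = M\}$ coincides with $\{Q_\tau \geq M\}$, giving $p_M = \pr(Q_\tau \geq M \mid S_0 = M)$; the identification of $p_0$ is symmetric with $S_0 = 0$.

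The proof is largely mechanical, and the only subtlety to respect is that the coupling is well-defined precisely because time-homogeneity allows us to reuse the same mapping $(B_t, N_t, s) \mapsto Z_t$ when defining both processes, so that the increments driving $Q_t$ are exactly those generated along the $S_t$ path up to $\tau$. No martingale or distributional argument is needed at this stage, which is what makes this lemma a clean stepping-stone to the more delicate hitting-time and hitting-probability estimates used later.
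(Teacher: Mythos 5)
Your proposal is correct and follows essentially the same route as the paper's proof: an induction showing $S_t = Q_t$ while the unreflected walk stays in $(0,M)$ (so the projection acts as the identity), followed by the identification of the exit time and the boundary hit. The only cosmetic quibble is the parenthetical claim that $t-1<\tau$ forces $Q_{t-1}\in(0,M)$, which fails at $t-1=0$ when $S_0\in\{0,M\}$, but that observation is not used anywhere in the argument.
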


\subsection{Inefficiency of Static Allocation Policies ($\Dfair=0$)} 
\label{ssec:fixed_policy} 

We now characterize the inefficiency of {\it static} policies, wherein $A_t=\alpha$ for all $t \in \mathbb{N}$, for some $\alpha \geq 0$. Note that by definition, $\Dfair = 0$ for this class of policies.

We first turn our attention to the natural static policy that sets $\alpha^* := \frac{\mu_{\BDist}}{\mu_{\NDist}}$ in each period. In the remainder of this work, we refer to this policy as the {\it proportional} allocation policy. \Cref{thm:Inefficient_for_fix_0_mean} recovers the scaling behavior of the stylized birth-death process analyzed in \Cref{ssec:warmup}.

\begin{restatable}{theorem}{InefficiencyZeroMean} \label{thm:Inefficient_for_fix_0_mean}
The static policy that allocates $\alpha^* = \Bmean / \Nmean$ in each round {achieves}: $$\overline{W} = \Theta(M^{-1}) \;\;\;\;\;\;\;\; \text{ and } \;\;\;\;\;\;\;\; \overline{V} = \Theta(M^{-1}).$$
    Therefore, $\Deff = \Theta(M^{-1})$.
\end{restatable}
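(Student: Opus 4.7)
The plan is to apply the framework of Proposition~\ref{Cor:Main_expression_for_inefficiency} and Theorem~\ref{Thm:Main_expression_for_inefficiency}. Under the proportional policy, $Z_t = B_t - \alpha^* N_t$ satisfies $\E[Z_t] = \Bmean - \alpha^* \Nmean = 0$ and $\sigma^2 := \var{Z_t} \geq \Bstd^2 > 0$. Since the policy allocates $\alpha^*$ uniformly, in particular when $S_{t-1} \in \{0, M\}$, Proposition~\ref{Cor:Main_expression_for_inefficiency} applies with $\alpha_0 = \alpha_M = \alpha^*$; because $Z_1$ is mean-zero and non-degenerate, $\E[Z_1^+]$ and $\E[Z_1^-]$ are strictly positive constants, yielding $\overline{W} = \Theta(H_M)$ and $\overline{V} = \Theta(H_0)$. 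Hence it suffices to prove $H_M = H_0 = \Theta(M^{-1})$.

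By Theorem~\ref{Thm:Main_expression_for_inefficiency} and Lemma~\ref{lem:Random_walk_equivalence_1}, this reduces to estimating the hitting statistics $E(M), E(0), p_M, p_0$ of the unreflected walk $Q_t$. Since $\E[Z_t] = 0$ and $Z_t \in [-Z_{\min}, Z_{\max}]$, both $Q_t$ and $Q_t^2 - \sigma^2 t$ are martingales with bounded increments. Starting from $S_0 = M$, I would condition on the sign of $Z_1$: with probability $\pr(Z_1 \geq 0) > 0$ the walk exits at $M$ at time $\tau = 1$, while with probability $\pr(Z_1 < 0) > 0$ it enters the interior at $M + Z_1 \in [M - Z_{\min}, M)$, where optional stopping applies; the analogous conditioning applies at $S_0 = 0$.

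From an interior point $x$, optional stopping applied to $Q_t$ gives that the probability of exiting at the upper barrier equals $x/M$ up to an $O(1/M)$ overshoot correction (since $Q_\tau$ overshoots $\{0, M\}$ by at most $Z_{\min} \vee Z_{\max}$). Averaging over $Z_1 < 0$ then yields $1 - p_M = \Theta(1/M)$, and a symmetric argument gives $1 - p_0 = \Theta(1/M)$, so that $(1-p_M)/(1-p_0) = \Theta(1)$. Optional stopping applied to $Q_t^2 - \sigma^2 t$ yields $\sigma^2 \E[\tau \mid Q_0 = x] = \E[Q_\tau^2 \mid Q_0 = x] - x^2 = x(M-x) + O(M)$; evaluated at a post-first-step state within $Z_{\min}$ of $M$, this gives $\E[\tau] = \Theta(M)$, and therefore $E(M) = \Theta(M)$, with $E(0) = \Theta(M)$ following symmetrically. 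Substituting into \eqref{eq:hit_count} gives $H_M, H_0 = \Theta(M^{-1})$, whence $\Deff = \Theta(M^{-1})$.

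The main technical obstacle I anticipate is controlling the boundary overshoot uniformly in $M$. Each optional-stopping identity is exact only when the walk exits at $\{0, M\}$ precisely, so every estimate above carries an additive error of size $Z_{\min} \vee Z_{\max}$; the argument must verify that these $O(1)$ absolute corrections translate into $O(1/M)$ relative errors without destroying the matching \emph{lower} bounds on $1 - p_M$, $1 - p_0$, $E(M)$, and $E(0)$, not merely the upper bounds. The regularity hypotheses of Theorem~\ref{Thm:Main_expression_for_inefficiency} --- finiteness of $E(M), E(0)$, $p_M, p_0 \in (0,1)$, and a.s.\ finiteness of $\tau_M(S_0), \tau_0(S_0)$ --- then follow routinely from the positive variance of $Z_t$ and standard random-walk arguments on the bounded interval $[0, M]$.
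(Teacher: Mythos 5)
Your proposal is correct and follows essentially the same route as the paper's proof: reduce to $H_M, H_0$ via Proposition~\ref{Cor:Main_expression_for_inefficiency} and Theorem~\ref{Thm:Main_expression_for_inefficiency}, condition on the sign of $Z_1$, and apply optional stopping to $Q_t$ and $Q_t^2-\sigma^2 t$ with $O(1)$ overshoot control to get $1-p_M, 1-p_0 = \Theta(1/M)$ and $E(M), E(0) = \Theta(M)$. The only cosmetic difference is that for the lower bound on $E(M)$ the paper uses the martingale $Q_\tau(M-Q_\tau)+\sigma^2 t$ (so that $\Phi(Q_\tau)\le 0$ kills the overshoot term automatically), whereas you recover the same estimate by combining $Q_t^2-\sigma^2 t$ with the exit-probability bound --- a linear combination of the same two martingales.
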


Note that under this policy, the inventory drift $Z_t(\alpha^*)$ satisfies $\Exp{Z_t(\alpha^*)} = 0$. We will later see that this choice is uniquely optimal across the space of all static policies. Furthermore, \cref{thm:Inefficient_for_fix_0_mean} illustrates the poor performance of static policies; in particular, we will later see that extremely simple adaptive policies achieve an inefficiency scaling as $e^{-\Omega(M \Delta)}$, an exponential improvement over the proportional allocation policy.

We provide a sketch of the proof of \Cref{thm:Inefficient_for_fix_0_mean}, deferring the complete proof to Appendix \ref{apx:thm-2-proof}.

\begin{proof}[Proof sketch]
For ease of notation, we let $Z_t = Z_t(\alpha^*)$ for all $t$, and let $\sigma^2 = \mathbb{E}[Z_1^2]$. We first argue that the stopping times $E(M), E(0)$ and hitting probabilities $p_M, p_0$ are well defined and finite, since $\text{Var}[B_t] > 0$. Thus, by \cref{Cor:Main_expression_for_inefficiency} and \cref{Thm:Main_expression_for_inefficiency}, it suffices to deduce expressions for $E(M), p_M, E(0)$, and $p_0$. 

We then prove that $p_M = 1-\Theta(M^{-1})$. We approach this by conditioning on the first step of our random walk. Namely, starting from $S_0 = M$, if $Z_1 \geq 0$, then $Q_t$ necessarily hits $M$ before 0. Therefore, it suffices to analyze the likelihood that $Q_t$ hits $M$ before 0 if $Z_1 < 0$. We formalize this via the law of total probability, which gives us:
\[p_M = 1-\Pr(Z_1 < 0)\Pr(Q_\tau \leq 0 \mid Z_1 < 0, S_0 = M).\]
We provide a closed-form expression for $\Pr(Q_\tau \leq 0 \mid Z_1 < 0, S_0 = M)$ by applying the optional stopping theorem to the martingale $(Q_t, t \geq 1)$, i.e., $\E[Q_\tau \mid Z_1 < 0, S_0 = M] = \E[Q_1 \mid Z_1 < 0, S_0 = M]$.
Conditioning the left-hand side of this equation on the event $\{Q_\tau \leq 0\}$ and solving for $\Pr(Q_\tau \leq 0 \mid Z_1 < 0, S_0 = M)$, we establish that
\[\Pr(Q_\tau \leq 0 \mid Z_1 < 0, S_0 = M) =\frac{\E[Q_\tau \mid S_0 = M, Z_1 < 0, Q_\tau \geq M]-M-\E[Z_1 \mid Z_1 < 0]}{\E[Q_\tau \mid S_0 = M, Z_1 < 0, Q_\tau \geq M]-\E[Q_\tau \mid S_0 = M, Z_1 < 0, Q_\tau \leq 0]},\]
which can be seen to be $\Theta(M^{-1})$ by inspection. Therefore, $p_M = 1-\Theta(M^{-1})$. Symmetric arguments establish that $p_0 = 1-\Theta(M^{-1})$. Plugging these bounds into \eqref{eq:hit_count} in \Cref{Thm:Main_expression_for_inefficiency}, it remains to argue that $E(M)$ and $E(0)$ scale as $\Theta(M)$.

These bounds similarly follow from applications of the optional stopping theorem on appropriately defined martingales. For the upper bound on $E(M)$, we consider the martingale $Q_t^2 - \sigma^2t$, where $\sigma^2 = \E[Z_1^2]$. Then, by the optional stopping theorem,
\begin{align}\label{eq:opt-stop-em-sketch}
\E[Q_\tau^2 \mid S_0 = M]-\sigma^2 E(M) = M^2.
\end{align}
Solving for $E(M)$, and using the fact that $Q_{\tau}^2 \leq (M + Z_{\max})^2$ almost surely, we obtain $E(M) = O(M)$. 

Unfortunately, \eqref{eq:opt-stop-em-sketch} produces too loose a lower bound on $E(M)$. We overcome this difficulty by conditioning on the first step of the process, as we had done before. In particular, suppose $Z_1 \geq 0$. Then, by definition, $\tau = 1$. If $Z_1 < 0$, on the other hand, $\tau = 1 + \tau(M+Z_1)$, where, recall, $\tau(M+Z_1)$ denotes the first time the process hits $0$ or $M$, starting from state $M+Z_1$. Therefore, it suffices to lower bound $\E[\tau(M+Z_{1})\ind{Z_1 < 0}]$. Applying the optional stopping theorem to the martingale $Q_t(M-Q_t) + \sigma^2 t$ yields the following key inequality, for any $S \in [0,M]$:
\[\E[\tau(S) \mid S] \geq \frac{S(M-S)}{\sigma^2}.\]
Plugging $S = M+Z_1 {< M}$ into the above, and integrating over all $Z_1 < 0$, we obtain $\E[\tau(M+Z_1)\ind{Z_1 < 0}] = \Omega(M)$, thereby establishing that $E(M) = \Omega(M)$. The lower bound on $E_0$ follows similar lines; we omit it as such.
\end{proof}

\begin{remark}
An immediate corollary of the Central Limit Theorem is that $\overline{W}$ and $\overline{V}$ scale as $\Omega(1/M^2)$, looser than the lower bound given in \Cref{thm:Inefficient_for_fix_0_mean}. This illustrates the power of \Cref{Thm:Main_expression_for_inefficiency}, and the necessity of a finer-grained analysis of the process's hitting times. 
\end{remark}


\Cref{thm:Inefficient_for_fix_non-0_mean} next establishes that, for any choice of $\alpha \neq \alpha^*$, static allocation policies incur a constant inefficiency, in the long-run. Therefore, the natural proportional allocation policy is optimal amongst all static policies.
\begin{restatable}{theorem}{InefficiencyDrift} 
\label{thm:Inefficient_for_fix_non-0_mean}
    {Any static policy that allocates $\alpha \neq \alpha^*$ in each round incurs}
    $\overline{V} = \Theta(1)$ when
  $\alpha > \frac{\Bmean}{\Nmean}$, and $\overline{W} = \Theta(1)$ when $\alpha < \frac{\Bmean}{\Nmean}$. Therefore, for any $\alpha \neq \alpha^*$, $\Deff = \Theta(1)$.   
\end{restatable}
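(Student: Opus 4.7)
The plan is to treat the case $\alpha > \alpha^* := \Bmean/\Nmean$ and establish $\overline{V} = \Theta(1)$; the complementary case $\alpha < \alpha^*$ giving $\overline{W} = \Theta(1)$ will follow by a symmetric argument that swaps the roles of the boundaries $0$ and $M$, and the conclusion $\Deff = \Theta(1)$ is then immediate from $\Deff = h\overline{W} + b\overline{V}$ together with the trivial upper bounds $\overline{W},\overline{V} \leq Z_{\max}$ from Proposition \ref{Cor:Main_expression_for_inefficiency}. Throughout, the per-period drift $\mu := \E[Z_1(\alpha)] = \Bmean - \alpha\Nmean$ is strictly negative. Proposition \ref{Cor:Main_expression_for_inefficiency} sandwiches $\overline{V}$ between $\E[Z_1(\alpha)^-] H_0$ and $Z_{\min} H_0$; since $\E[Z_1(\alpha)^-] \geq |\mu| > 0$ and $H_0 \in [0,1]$ are both $M$-independent, the upper bound $\overline{V} = O(1)$ is immediate and the lower bound reduces to showing $H_0 = \Omega(1)$. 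By Theorem \ref{Thm:Main_expression_for_inefficiency}, it then suffices to bound $E(M)(1-p_0)/(1-p_M) + E(0)$ by a constant.

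The next step is to pass to the unreflected walk $Q_t = S_0 + \sum_{s \leq t} Z_s(\alpha)$ with exit time $\tau = \inf\{t > 0 : Q_t \notin (0,M)\}$ via Lemma \ref{lem:Random_walk_equivalence_1}, and to bound the four ingredients in turn. The critical input is an exponentially small bound on $1-p_0$. When $\Pr(Z_1(\alpha) > 0) = 0$, starting from $Q_0 = 0$ forces $Q_1 \leq 0$ so $1-p_0 = 0$ trivially. Otherwise, the log moment generating function $\Lambda(\theta) = \log\E[e^{\theta Z_1(\alpha)}]$ is strictly convex with $\Lambda(0) = 0$ and $\Lambda'(0) = \mu < 0$, and diverges as $\theta \to \infty$ because $Z_1(\alpha)$ is bounded with $\Pr(Z_1(\alpha) > 0) > 0$; a Cram\'er root $\theta^* > 0$ with $\Lambda(\theta^*) = 0$ therefore exists. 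Applying the optional stopping theorem to the bounded positive martingale $e^{\theta^* Q_{t \wedge \tau}}$ with $Q_0 = 0$ yields $1 = \E[e^{\theta^* Q_\tau}] \geq (1-p_0) e^{\theta^* M}$, whence $1 - p_0 \leq e^{-\theta^* M}$.

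The remaining three quantities follow with less work. First, $1 - p_M = \Omega(1)$ holds by conditioning on $\{Z_1(\alpha) \leq \mu\}$, an event with strictly positive probability since $\E[Z_1(\alpha)] = \mu$, and invoking the same large-deviation estimate from the interior state $M + Z_1 \leq M + \mu < M$: the walk reaches $0$ before returning to $M$ with probability at least $1 - e^{\theta^* \mu}$, a positive constant. Second, Wald's identity $\E[Q_\tau - S] = \mu \cdot E(S)$ combined with the boundary range $Q_\tau \in [-Z_{\min}, M + Z_{\max}]$ and the exponential bound on $1 - p_0$ gives $E(0) \leq (Z_{\min} + (1-p_0)(M+Z_{\max}))/|\mu| = O(1)$ and $E(M) \leq (M + Z_{\min})/|\mu| = O(M)$. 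Combining, $E(M)(1-p_0)/(1-p_M) + E(0) = O(M e^{-\theta^* M}) + O(1) = O(1)$, delivering $H_0 = \Omega(1)$ and hence $\overline{V} = \Omega(1)$.

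The main obstacle will be the exponential decay $1 - p_0 \leq e^{-\theta^* M}$: one must verify existence of the Cram\'er root $\theta^* > 0$ (using bounded support and strictly positive variance of $\BDist$) and apply optional stopping to the bounded martingale $e^{\theta^* Q_{t \wedge \tau}}$ while properly handling the bounded overshoot past $M$ and confirming $\tau < \infty$ almost surely (which follows from the negative drift and the strong law of large numbers). Once this tail bound is in hand, the Wald computations for the hitting times and the constant lower bound on $1 - p_M$ are routine applications of the same machinery, and the arithmetic collapses to the claimed $\Theta(1)$ scaling.
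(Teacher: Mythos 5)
Your proposal is correct in its essentials and reaches the same reduction as the paper (via \Cref{Cor:Main_expression_for_inefficiency} and \Cref{Thm:Main_expression_for_inefficiency} down to bounding $E(M)$, $E(0)$, $p_M$, $p_0$ for the unreflected walk), but the key estimate is obtained by a genuinely different route. The paper only ever proves the weaker bound $1-p_{\text{far boundary}} = O(M^{-1})$, using optional stopping on the \emph{linear} sub/supermartingale $Q_t$ conditioned on the first step (\Cref{lem:static-bad}); it then avoids needing anything stronger by first writing $E(0) \le (1-p_0)(M+Z_{\max})/|\mu|$ via Wald so that the $(1-p_0)$ factors cancel in the ratio $E(0)\cdot\frac{1-p_M}{1-p_0}$, leaving only $(M+Z_{\max})(1-p_M)=O(1)$. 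You instead establish the exponential Lundberg-type bound $1-p_0\le e^{-\theta^* M}$ via the Cram\'er root of $\log\E[e^{\theta Z_1(\alpha)}]$ and the bounded exponential martingale $e^{\theta^* Q_{t\wedge\tau}}$, then pay for the cruder bounds $E(M)=O(M)$ and $1-p_M=\Omega(1)$ out of the exponentially small factor. Both arguments are valid; yours delivers a strictly stronger intermediate estimate (and mirrors the exponential-martingale machinery the paper deploys later for the Bang-Bang policy in \Cref{lem:Mean_non-0_case_marginales}), while the paper's is more elementary, requiring only linear martingales and no moment-generating-function analysis. Two details you should make explicit: (i) Wald's identity for $Q_t-\mu t$ requires $\E[\tau]<\infty$, not merely $\tau<\infty$ a.s.; this is supplied by \Cref{lem:policy_constants}, whose two-sided condition holds in your non-degenerate case since $\Pr(Z_1(\alpha)>0)>0$ and $\mu<0$ forces $\Pr(Z_1(\alpha)<0)>0$; and (ii) in the degenerate case $\Pr(Z_1(\alpha)>0)=0$ you cannot invoke \Cref{Thm:Main_expression_for_inefficiency} at all, since its hypothesis $p_0\in(0,1)$ fails; the paper handles this (its Case 2) by observing the reflected process absorbs at the boundary, giving $H_0=1$ directly, and your proof needs the same short separate argument rather than just the remark that $1-p_0=0$.
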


We provide some intuition behind the result, deferring a formal proof to Appendix \ref{apx:thm-3-proof}. Fix $\alpha \neq \alpha^*$. If $\alpha > \alpha^*$, $S_t$ has negative drift; as a result, one would expect it to spend a constant fraction of time in state 0 in steady state. For $\alpha < \alpha^*$, on the other hand, $S_t$ has positive drift, resulting in a constant fraction of time spent in state $M$. In the former case, the long-run average stockout rate will be large; in the latter, the long-run average overflow dominates.
\subsection{Envy-Inefficiency Trade-off Under a Bang-Bang Policy} 
\label{ssec:Bang-Bang_Policy} 

In this section we demonstrate that simple adaptive policies --- namely, the popular class of \ALG policies from the stochastic control literature \citep{scokaert2002constrained,cerf1995bang} --- can yield an exponential improvement in efficiency, at the price of limited envy. At a high level, our algorithm proceeds as follows: when the store is more than half empty, our algorithm allocates slightly less than $\mu_{\BDist}/\mu_{\NDist}$ in order to avoid the risk of a stockout; when it is more than half full, it allocates slightly more than $\mu_{\BDist}/\mu_{\NDist}$ in order to avoid the risk of overflow.

Formally, our algorithm takes as input a target envy level $\Delta > 0$. Then, for each round $t\in\N$ it sets:
\begin{equation}
\label{eq:bang_bang_pt_2}
A_t = \begin{cases}
\frac{\mu_{\mathcal{B}}}{\mu_{\mathcal{N}}} - \frac{\Delta}{2} &\text{if } S_{t-1} < \frac{M}{2} \\
\frac{\mu_{\mathcal{B}}}{\mu_{\mathcal{N}}} + \frac{\Delta}{2} &\text{if } S_{t-1} \geq \frac{M}{2}.
\end{cases}
\end{equation}

Note that under this algorithm, we have:
\[
\begin{aligned}
\mathbb{E}[Z_t \mid S_{t-1} \ge M/2] 
&= \mu_{\mathcal{B}} - \mu_{\mathcal{N}}\!\left(\frac{\mu_{\mathcal{B}}}{\mu_{\mathcal{N}}} + \frac{\Delta}{2}\right)
= -\,\frac{\mu_{\mathcal{N}} \Delta}{2}, \\[4pt]
\mathbb{E}[Z_t \mid S_{t-1} < M/2] 
&= \mu_{\mathcal{B}} - \mu_{\mathcal{N}}\!\left(\frac{\mu_{\mathcal{B}}}{\mu_{\mathcal{N}}} - \frac{\Delta}{2}\right)
= \;\;\frac{\mu_{\mathcal{N}} \Delta}{2}.
\end{aligned}
\]
Thus, the expected inventory drift is negative when the inventory is above $M/2$, and positive when it is below $M/2$. In other words, \ALG($\Delta$) uses a slack of $\Delta$ in fairness to induce an $M/2$-reverting drift of magnitude $\mu_{\mathcal{N}} \Delta/2$. This steers the process away from the boundary states $\{0,M\}$, reducing the stationary boundary mass (and hence overflow/stockout) relative to static policies for which $\Delta = 0$.
 \Cref{thm:bang-bang_inefficiency_bound} below formalizes this intuition, establishing that such a simple $M/2$-reverting policy yields exponential gains in efficiency.

\begin{restatable}{theorem}{BangBangInefficiency} 
\label{thm:bang-bang_inefficiency_bound}
    Fix $\Delta \in (0, 2\Bmean/\Nmean)$, and suppose there exist strictly positive $\epsilon$ and $\delta$ such that:
    \begin{align}\label{eq:thm-cond}
    \Pr(B_t - N_t(\Bmean / \Nmean + \Delta / 2) \geq \epsilon) \geq \delta \quad \text{ and } \quad \Pr(B_t - N_t(\Bmean / \Nmean - \Delta / 2) \leq - \epsilon) \geq \delta.
    \end{align}
    {Then, the} \ALG($\Delta$) policy achieves
    $$\overline{W} = e^{-\Omega(\Delta M)} \;\;\;\;\;\;\;\; \text{ and }\;\;\;\;\;\;\;\; \overline{V} =e^{-\Omega(\Delta M)}.$$
    Therefore, $\Deff = e^{-\Omega(\Delta M)}$.
\end{restatable}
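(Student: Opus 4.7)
The plan is to use Corollary~\ref{Cor:Main_expression_for_inefficiency} and Theorem~\ref{Thm:Main_expression_for_inefficiency} to reduce the result to showing that, under $\ALG(\Delta)$, the expected boundary hitting times $E(M)$ and $E(0)$ are both exponentially large in $\Delta M$. The structural reason this should hold is already exposed in \Cref{ssec:warmup}: since $\ALG(\Delta)$ enforces an $M/2$-reverting conditional drift of magnitude $\Nmean\Delta/2$, reaching either boundary from the interior requires a macroscopic deviation of order $M/2$ against this drift, a rare event whose probability decays like $e^{-\Theta(\Delta M)}$ by a Cram\'er/Chernoff argument. The non-trivial work is to turn this heuristic into sharp hitting-time bounds on the actual continuous-state, two-regime walk.

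The first step is to verify the hypotheses of \Cref{Thm:Main_expression_for_inefficiency}. Since $\ALG(\Delta)$ is time-homogeneous and sets $A_t = \alpha_M := \Bmean/\Nmean + \Delta/2$ at $S_{t-1}=M$ and $A_t = \alpha_0 := \Bmean/\Nmean - \Delta/2$ at $S_{t-1}=0$, \Cref{Cor:Main_expression_for_inefficiency} applies, and its lower bounds are nontrivial: condition~\eqref{eq:thm-cond} gives $\Exp{Z_1(\alpha_M)^+} \ge \epsilon\delta$ and $\Exp{Z_1(\alpha_0)^-} \ge \epsilon\delta$. The same condition guarantees that from either boundary the chain moves into the interior in one step with probability at least $\delta$, so $p_M, p_0 \in (0,1)$ and $\tau_M, \tau_0 < \infty$ almost surely; finiteness of $E(M),E(0)$ then follows from a standard geometric Foster--Lyapunov argument applied to the mean-reverting chain.

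The technical core is showing $E(M), E(0) = e^{\Omega(\Delta M)}$. Passing to the unreflected walk $Q_t$ via \Cref{lem:Random_walk_equivalence_1}, the increments are bounded and the conditional drift equals $+\Nmean\Delta/2$ when $Q_{t-1} < M/2$ and $-\Nmean\Delta/2$ when $Q_{t-1} \ge M/2$. A second-order Taylor expansion of the two conditional moment generating functions (valid by the boundedness of $B_t, N_t$) yields Cram\'er exponents $\lambda_+, \lambda_- = \Theta(\Delta)$ with $\Exp{e^{\lambda_+ Z_t}\mid Q_{t-1}\ge M/2} \le 1$ and $\Exp{e^{-\lambda_- Z_t}\mid Q_{t-1} < M/2} \le 1$. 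Consequently $e^{\lambda_+(Q_t - M/2)}$ is a supermartingale while $Q_t \in [M/2,M]$, and optional stopping at the first exit of $(M/2,M)$ yields that, starting from $M/2$, the probability of exiting through $M$ before returning below $M/2$ is at most $e^{-\lambda_+ M/2} = e^{-\Omega(\Delta M)}$; a symmetric bound holds in the lower regime. A regenerative argument around the crossing level $M/2$ then shows that the number of excursions before $Q_t$ exits $(0,M)$ is geometric with mean $e^{\Omega(\Delta M)}$, while each returning excursion has expected length $O(1/\Delta)$ by classical first-passage bounds for drifted walks. Combining this with condition~\eqref{eq:thm-cond} to move off the boundary into the interior in the first step gives $E(M), E(0) = e^{\Omega(\Delta M)}$.

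Substituting into \eqref{eq:hit_count}, both denominators $E(M) + E(0)(1-p_M)/(1-p_0)$ and $E(M)(1-p_0)/(1-p_M) + E(0)$ are at least $\max\{E(M), E(0)\} = e^{\Omega(\Delta M)}$ regardless of the hitting-probability ratio (each summand is nonnegative). Hence $H_M, H_0 = e^{-\Omega(\Delta M)}$, and \Cref{Cor:Main_expression_for_inefficiency} gives $\overline{W} \le Z_{\max} H_M = e^{-\Omega(\Delta M)}$ and $\overline{V} \le Z_{\min} H_0 = e^{-\Omega(\Delta M)}$, so $\Deff = h\overline{W} + b\overline{V} = e^{-\Omega(\Delta M)}$. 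The main obstacle I anticipate is the regenerative/excursion argument around $M/2$: a single exponential martingale does not cover both drift regimes, so one must carefully stitch the upper- and lower-regime supermartingales together through a geometric trials decomposition and, crucially, verify that the Cram\'er roots $\lambda_\pm$ scale linearly in $\Delta$, so that the exponents $\lambda_\pm M/2$ become $\Theta(\Delta M)$ and not merely $\Theta(\Delta^2 M)$ or smaller.
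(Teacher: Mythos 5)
Your proposal is correct and follows essentially the same route as the paper: reduce to showing $E(M), E(0) = e^{\Omega(\Delta M)}$ via \Cref{Cor:Main_expression_for_inefficiency} and \Cref{Thm:Main_expression_for_inefficiency}, build regime-dependent exponential supermartingales with exponent \emph{linear} in $\Delta$ (the paper confirms this via Hoeffding's lemma in \Cref{lem:Mean_non-0_case_marginales}, resolving exactly the $\Theta(\Delta)$-vs-$\Theta(\Delta^2)$ concern you flag), and count the geometrically many $M/2$-crossings before boundary exit (the paper's $\kappa$ and tail-sum bound). The only step you gloss over is the initial passage from $M$ down to the $M/2$ regeneration level: a single step off the boundary does not reach $M/2$, and the paper instead bounds the probability of crossing $M/2$ before returning to $M$ by $\Omega(1/M)$ via optional stopping on the plain supermartingale $Q_t$ (\Cref{eq:hit-prob-from-M}), a polynomial prefactor that is then absorbed into the exponential.
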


Before proving our main result, we comment on the conditions on $\Delta$ required for \Cref{thm:bang-bang_inefficiency_bound} to hold. The upper bound $\Delta < 2\Bmean/\Nmean$ simply enforces that $A_t > 0$ for all $t$. \Cref{eq:thm-cond} on the other hand guarantees that the inventory drift $Z_t$ is lower (resp., upper) bounded by a positive (resp., negative) constant with constant probability. These are mild regularity conditions that ensure that the hitting times are well-defined, thereby allowing us to apply \Cref{Thm:Main_expression_for_inefficiency}. They effectively impose an upper bound on $\Delta$ with respect to the distributional primitives $\BDist$ and $\NDist$. We provide a sufficient condition on $\Delta$ for these to hold in Appendix \ref{apx:delta-cond}, noting that the range of $\Delta$ for which this condition holds increases with the variance of $\BDist$.

\Cref{thm:bang-bang_inefficiency_bound} implies the optimality of the popular class of Bang-Bang policies which, in contrast to the optimal policies derived in existing work on fair allocation~\citep{sinclair2021sequential,banerjee2024online}, are not forward-looking. While the optimality of these policies is well-established for {\it linear} stochastic control problems~\citep{scokaert2002constrained,cerf1995bang}, our inefficiency metric and dynamics are non-linear in the allocations; it is therefore not a priori obvious that this simple policy class is scaling-optimal in our setting. We prove the theorem below.

\begin{proof}[Proof]
For notational convenience, we analyze \ALG($2\Delta$) instead of \ALG($\Delta$) throughout. 

Note that our algorithm only ever allocates $A_t \in \{\Bmean / \Nmean - \Delta, \Bmean / \Nmean + \Delta\}$. By our assumptions on $\Delta$, there exist $\epsilon, \delta > 0$ such that:
\begin{align*}
&\Pr\left(B_t-N_t\left(\frac{\mu_{\BDist}}{\mu_{\NDist}} + \Delta \right) \geq \epsilon \right) \geq \delta \implies \Pr\left(B_t-N_t\left(\frac{\mu_{\BDist}}{\mu_{\NDist}} - \Delta \right) \geq \epsilon \right) \geq \delta,\\
\text{and } &\Pr\left(B_t-N_t\left(\frac{\mu_{\BDist}}{\mu_{\NDist}} - \Delta \right) \leq -\epsilon \right) \geq \delta \implies \Pr\left(B_t-N_t\left(\frac{\mu_{\BDist}}{\mu_{\NDist}} + \Delta \right) \leq -\epsilon \right) \geq \delta.
\end{align*}
Hence, the conditions of \Cref{lem:policy_constants} (see Appendix \ref{apx:aux-res}) hold, and $H_M$ and $H_0$ exist. By \Cref{Cor:Main_expression_for_inefficiency}, it suffices to upper bound these two terms. 

Recall, by \Cref{Thm:Main_expression_for_inefficiency}, $H_M \leq \frac{1}{E(M)}$ and $H_0 \leq \frac{1}{E(0)}$. We show that
$E(M) = e^{\Omega(\Delta M)}$, and therefore \mbox{$H_M = e^{-\Omega(\Delta M)}$}. The proof that $E(0) = e^{\Omega(\Delta M)}$ (and therefore $H_0 = e^{-{\Omega}(\Delta M)}$) follows similarly. We omit it as such.

In order to bound $E(M)$, we define the following sequence of stopping times {(see \Cref{fig:flip_stopping_times} for a diagram)}. Let $\tau_0 = 0$. For all $k \geq 1$, let $\tau_k$ denote the $k$-th time $Q_t$ crosses state $M/2$. Formally:
\[\tau_{k} = \inf\left\{t > \tau_{k-1}: \left(Q_t \geq \frac{M}{2} \text{ and } Q_{t-1} < \frac{M}{2}\right) \text{ or } \left(Q_t<\frac{M}{2} \text{ and } Q_{t-1} \geq \frac{M}{2}\right)\right\}.\]
Moreover, let $\kappa = \max\{k \in \mathbb{N}: \tau_k \leq \tau\}$ be the number of times $Q_t$ crosses $M/2$ before hitting one of the boundaries. We use the convention that $\kappa = 0$ if $\tau < \tau_1$. Notice that $\tau \geq \kappa$. To see this, observe that
\begin{align*}
\tau = \sum_{s=1}^{\kappa}\left(\tau_s-\tau_{s-1}\right) + \tau-\tau_{\kappa} \geq \kappa,
\end{align*}
since $\tau_s - \tau_{s-1} \geq 1$ by definition, and moreover $\tau_\kappa \leq \tau$. Therefore, $E(M) := \E[\tau] \geq \E[\kappa]$.

\begin{figure}[!t]
\centering
\scalebox{1}{
\input{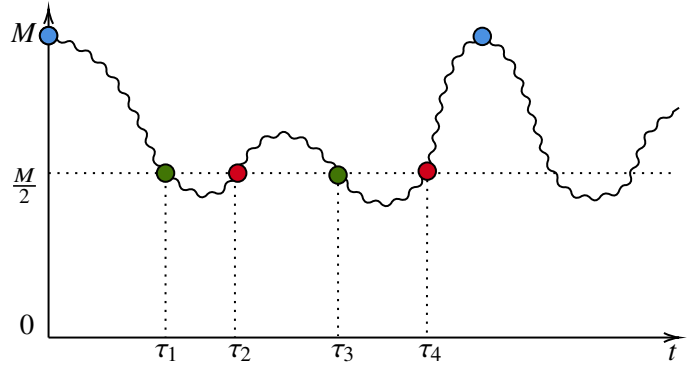}
}
\caption{Sequence of stopping times $(\tau_k)_{k \in [N]}$ used in the proof of \cref{thm:bang-bang_inefficiency_bound}. The process starts at $S_0 = M$.  
For $k$ odd (green marker), the process crosses $\tfrac{M}{2}$ from above; for $k$ even (red marker), it crosses $\tfrac{M}{2}$ from below. Here, $\kappa = 4$.
}
\label{fig:flip_stopping_times}
\end{figure}

To see why $\E[\kappa]$ is large under our Bang-Bang policy, notice that the stopping times $\tau_k$ represent the times at which the Bang-Bang policy switches its allocation. Hence, between any two stopping times, the Bang-Bang policy is a static policy whose inventory level drifts {\it toward} $M/2$. As a result, $Q_t$ will hit $M/2$ before the closest boundary with high probability; it will then flip over to the other side of $M/2$, and the same phenomenon will occur. Putting these facts together, the Bang-Bang policy enforces oscillation around $M/2$, with high probability, therefore resulting in a high value of $\E[\kappa]$.

We formalize this intuition by first lower bounding $\E[\kappa]$ using the tail sum formula, i.e.,:
\begin{align}\label{eq:tail-sum}
\E[\kappa] &= \sum_{k\geq 1}\Pr(\kappa \geq k) \geq \sum_{k \geq 1}\Pr(\kappa\geq k \mid \kappa \geq k-1)\Pr(\kappa \geq k-1) \geq \Pr\left(\kappa \geq 1\right)\sum_{k\geq 1}\prod_{i=2}^{k}\Pr\left(\kappa \geq i \mid \kappa \geq i-1\right).
\end{align}

In order to analyze the above expression, we define $\Ttaulow = \inf\{t > 0: Q_t \not\in (0, \frac{M}{2})\}$ and $\Ttauhigh = \inf\{t > 0: Q_t \not\in [ \frac{M}{2}, M)\}$. The following lemma --- the workhorse of our theorem --- establishes that (i) the likelihood that $Q_t$ crosses $M/2$ before hitting $M$ scales as $\Omega(1/M)$, and (ii) once $Q_t$ is close to $M/2$, it hits the closest boundary before crossing $M/2$ with probability that is exponentially decreasing in $\Delta M$. 

\begin{restatable}{lemma}{HitProbabilityLemma}\label{lem:bound_on_hit_probabilities}
There exists a constant $C > 0$ independent of $M, \Delta$ such that
\begin{align}
\pr\!\left(\left. Q_{\Ttauhigh} < \frac{M}{2}\,\right|\, S_0=M\right)
&\;\ge\; \frac{C}{M}. \label{eq:hit-prob-from-M}
\end{align}
Moreover, there exist positive constants $C_1, C_2$, independent 
of $M, \Delta$, such that
\begin{align}
\pr\!\left(\left. Q_{\Ttaulow} \ge \frac{M}{2}\,\right|\, S_0=S\right)
&\;\ge\; 1 -  e^{-C_1\Delta M},
&&\text{for } S \in \bigl[M/2 - Z_{\min},\, M/2\bigr), \label{eq:hit-prob-below} \\
\pr\!\left(\left. Q_{\Ttauhigh} < \frac{M}{2}\,\right|\, S_0=S\right)
&\;\ge\; 1 - e^{-C_2\Delta M},
&&\text{for } S \in \bigl[M/2,\, M/2 + Z_{\max}\bigr] \label{eq:hit-prob-above}.
\end{align}
\end{restatable}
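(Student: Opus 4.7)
The plan is to analyze the unreflected walk $Q_t$ separately within each half of $[0,M]$, exploiting the fact that \ALG($2\Delta$) applies a constant allocation on each side of $M/2$ and thus induces a random walk with \emph{constant} drift within each strip. Specifically, until $Q_t$ exits the relevant strip, its increments have mean $+\mu_{\NDist}\Delta$ on $(0, M/2)$ and mean $-\mu_{\NDist}\Delta$ on $[M/2, M)$, both pointing toward $M/2$. The three bounds then split into two qualitatively distinct regimes: \eqref{eq:hit-prob-from-M} starts at the far boundary $M$ and asks for the probability of ever reaching back to $M/2$, yielding only the gambler's-ruin-type $\Omega(M^{-1})$ scaling; whereas \eqref{eq:hit-prob-below} and \eqref{eq:hit-prob-above} start within $O(1)$ of $M/2$ and ask for the probability of reaching the $\Omega(M)$-far opposite boundary, where a Cram\'er-type exponential supermartingale yields the $e^{-\Theta(\Delta M)}$ rate.

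For \eqref{eq:hit-prob-from-M}, I would first condition on $Z_1$. By \Cref{eq:thm-cond}, the event $\{Z_1 \leq -\epsilon\}$ has probability at least $\delta$, placing $Q_1 \in [M/2, M-\epsilon]$ for $M$ sufficiently large. Since the upper-strip drift is negative, $Q_t$ is a supermartingale on $[M/2, M]$, so the optional stopping theorem applied at $\Ttauhigh$ gives $\E[Q_{\Ttauhigh} \mid Q_1] \leq Q_1$. Decomposing $\E[Q_{\Ttauhigh} \mid Q_1]$ as $p \cdot \E[Q_{\Ttauhigh} \mid \text{exit above } M] + (1-p) \cdot \E[Q_{\Ttauhigh} \mid \text{exit below } M/2]$, and using the overshoot bounds $\E[Q_{\Ttauhigh} \mid \text{exit above}] \geq M$ and $\E[Q_{\Ttauhigh} \mid \text{exit below}] \geq M/2 - Z_{\min}$, straightforward algebra yields $1 - p \geq \epsilon/(M/2 + Z_{\min}) = \Omega(M^{-1})$. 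Multiplying by the $\delta$ first-step probability completes the proof of \eqref{eq:hit-prob-from-M}.

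For \eqref{eq:hit-prob-below} and \eqref{eq:hit-prob-above}, I would deploy a Cram\'er-type exponential supermartingale. Focusing on \eqref{eq:hit-prob-below}: since the increments $Z_t$ on the lower strip are bounded in $[-Z_{\min}, Z_{\max}]$ and have mean $+\mu_{\NDist}\Delta > 0$, the Taylor expansion $\log \E[e^{-\lambda Z_1}] = -\lambda \mu_{\NDist}\Delta + \tfrac{1}{2}\lambda^2 \Var{Z_1} + O(\lambda^3)$ produces $\gamma = \Theta(\Delta)$ with $\E[e^{-\gamma Z_1}] \leq 1$. Then $e^{-\gamma Q_t}$ is a nonnegative supermartingale on the lower strip, and applying OST at $\Ttaulow$ together with $e^{-\gamma Q_{\Ttaulow}} \geq 1$ on $\{Q_{\Ttaulow} \leq 0\}$ gives
\[
\pr(Q_{\Ttaulow} \leq 0 \mid S_0 = S) \;\leq\; e^{-\gamma S} \;\leq\; e^{-\gamma(M/2 - Z_{\min})} \;=\; e^{-\Omega(\Delta M)}.
\]
The argument for \eqref{eq:hit-prob-above} is symmetric: pick $\gamma' = \Theta(\Delta)$ satisfying $\E[e^{\gamma' Z_1}] \leq 1$ for the upper-strip increments (which have mean $-\mu_{\NDist}\Delta$), so that $e^{\gamma' Q_t}$ is a supermartingale on the upper strip, and OST at $\Ttauhigh$ then yields $\pr(Q_{\Ttauhigh} \geq M \mid S_0 = S) \leq e^{-\gamma'(M-S)} = e^{-\Omega(\Delta M)}$.

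The main obstacle I anticipate is showing that the Cram\'er rate $\gamma$ can be taken to be $\Theta(\Delta)$ with a constant depending only on the bounds $Z_{\min}, Z_{\max}$ and the variance of the increments (rather than on $M$ or degrading as $\Delta \to 0$). This requires the above Taylor expansion to be valid on a neighborhood of $0$ of size independent of $\Delta$; because $Z_1$ is bounded, its MGF is analytic near $0$, and the quadratic term is well-controlled since $\Var{Z_1} \geq \Bstd^2 > 0$. This gives $\gamma \asymp 2\mu_{\NDist}\Delta / \Var{Z_1}$ uniformly, with the cubic correction absorbed by taking $\Delta$ below a distribution-dependent threshold. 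Ancillary technicalities --- finiteness of $\Ttaulow$ and $\Ttauhigh$ (guaranteed by \Cref{lem:policy_constants}) and a truncation justification of OST using the bounded-increment property --- are straightforward and can be absorbed into the constants $C, C_1, C_2$.
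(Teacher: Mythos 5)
Your proposal is correct and follows essentially the same route as the paper: first-step conditioning combined with the linear supermartingale and optional stopping for the $\Omega(1/M)$ bound in \eqref{eq:hit-prob-from-M}, and exponentially tilted supermartingales $e^{\mp \gamma Q_t}$ with $\gamma = \Theta(\Delta)$ plus optional stopping for the two exponential bounds. The only substantive difference is that the paper establishes the tilt via Hoeffding's lemma (yielding a valid constant for the entire admissible range of $\Delta$), whereas your Taylor expansion of the log-MGF requires $\Delta$ below a distribution-dependent threshold --- a minor weakening that you already flag and that could be removed by switching to the Hoeffding bound.
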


To derive these two facts, we construct appropriately defined supermartingales, to which we then apply the optional stopping theorem; we defer the formal proof to Appendix \ref{apx:bang-bang-aux}. 

We use \cref{eq:hit-prob-from-M} to bound $\Pr(\kappa \geq 1)$. In particular, by definition, the event $\{\kappa \geq 1\}$ is the event that $Q_t$ crosses $M/2$ before it hits $M$, given $S_0 = M$. Therefore, 
\[\Pr(\kappa \geq 1) = \Pr\left(Q_{\Ttauhigh} < M/2 \mid S_0 = M\right) \geq C/M,\] 
for some constant $C > 0$.

Consider now $k \geq 2$, and suppose first that $k$ is even. Conditional on the event $\{\kappa \geq k-1\}$, $Q_{\tau_{k-1}} < M/2$. Moreover, the event $\{\kappa \geq k\}$ in this case is the event that $Q_t$ crosses $M/2$ before it hits 0. ``Restarting'' the process at $\tau_{k-1}$, we then have:
\begin{align*}
\Pr(\kappa \geq k \mid \kappa \geq k-1, Q_{\tau_{k-1}}) = \Pr\left({Q}_{\Ttaulow} \geq M/2 \mid S_0 = Q_{\tau_{k-1}}\right).
\end{align*}
Since $k$ is even, we have $Q_{\tau_{k-1}-1} \geq M/2$ on the event $\{\kappa \geq k-1\}$. Therefore, $Q_{\tau_{k-1}} \in [M/2-Z_{\min},M/2]$. Applying Lemma~\ref{lem:bound_on_hit_probabilities},  \cref{eq:hit-prob-below} to the above, we obtain:
\begin{align*}
\Pr(\kappa \geq k \mid \kappa \geq k-1, Q_{\tau_{k-1}}) \geq 1-e^{-C_1\Delta M},
\end{align*}
for some $C_1 > 0$. Integrating over all possible $Q_{\tau_{k-1}} \in [M/2-Z_{\min},M/2]$, we have that $\Pr(\kappa \geq k \mid \kappa \geq k-1) \geq 1-e^{-C_1\Delta M}$.

Suppose now that $k$ is odd. In this case, $Q_{\tau_{k-1}} \geq M/2$. Moreover, the event $\{\kappa \geq k\}$ is the event that $Q_t$ crosses $M/2$ before it hits $M$. Similarly here, ``restarting'' the process at $\tau_{k-1}$, we have:
\begin{align*}
\Pr(\kappa \geq k \mid \kappa \geq k-1, Q_{\tau_{k-1}}) = \Pr\left({Q}_{\Ttauhigh} < M/2 \mid S_0 = Q_{\tau_{k-1}}\right) \geq 1-e^{-C_2\Delta M},
\end{align*}
for some $C_2 > 0$, where we have used the fact that $Q_{\tau_{k-1}-1} < M/2 \implies Q_{\tau_{k-1}} \leq M/2+Z_{\max}$ and applied Lemma~\ref{lem:bound_on_hit_probabilities},  \cref{eq:hit-prob-above}. Integrating over all possible $Q_{\tau_{k-1}} \in [M/2,M/2+Z_{\max}]$, we obtain \mbox{$\Pr(\kappa \geq k \mid \kappa \geq k-1) \geq 1-e^{-C_2\Delta M}$}. 

Let $C_3 = \min\{C_1, C_2\}$. Plugging these lower bounds into \eqref{eq:tail-sum}, we have:
\begin{align*}
\E[\kappa] \geq \frac{C}{M}\sum_{k\geq 1}\left(1-e^{-C_3\Delta M}\right)^{k-1} = \frac{C}{M}\cdot e^{C_3\Delta M} \geq e^{C_4\Delta M},
\end{align*}
for some $C_4 > 0$, for large enough $M$. Recalling that $E(M) \geq \E[\kappa]$, the proof is complete.
\end{proof}

\section{Impossibility Result}
\label{sec:impossibility}

We now provide a lower bound on the inefficiency of {\it any} policy that achieves an ex-post envy of $\Delta$. Our results will then imply that the \ALG policy analyzed in \Cref{ssec:Bang-Bang_Policy} lies on the envy-inefficiency Pareto frontier in a small-$\Delta$ regime, and is therefore order-wise optimal.

\begin{restatable}{theorem}{LowerBound} \label{thm:main_lower_bound}
Suppose $B_t \sim \text{Bernoulli}(1/2)$ and $N_t = 1$ almost surely, for all $t \in \mathbb{N}$.  Fix $0 < \Delta \leq 1/9$. Then, for any policy such that $\max\limits_{t,t'}|A_t - A_{t'}| = \Delta$, {$\Deff = e^{-O(\Delta M)}$.}
\end{restatable}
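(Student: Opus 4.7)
The plan is to lower bound the long-run boundary mass $H_M+H_0$ under any admissible policy, which by Proposition~\ref{Cor:Main_expression_for_inefficiency} translates into $\Deff \ge e^{-O(\Delta M)}$. First I would perform a structural reduction: the envy constraint forces every realized allocation to lie in some window $[a,a+\Delta]$, and if $a\ge 1/2$ or $a+\Delta\le 1/2$ then $\E[Z_t\mid\mathcal F_{t-1}]=1/2-A_t$ is always of one sign, which by the drift argument used in \Cref{thm:Inefficient_for_fix_non-0_mean} yields $\Deff=\Omega(1)$ and hence the claim trivially. So I would restrict attention to $A_t\in[1/2-\Delta/2,\,1/2+\Delta/2]$.

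Next, I would construct a pair of exponential submartingales. Since $B_t\sim\mathrm{Bern}(1/2)$, a direct computation gives $\E[e^{\lambda Z_t}\mid A_t] = e^{\lambda(1/2-A_t)}\cosh(\lambda/2)$. Choosing $\lambda$ to be the unique positive solution of $\cosh(\lambda/2)=e^{\lambda\Delta/2}$ (well-defined precisely because $\Delta\le 1/9$ keeps the Taylor remainder controlled, with $\lambda = 4\Delta + O(\Delta^3) = \Theta(\Delta)$) makes this quantity uniformly $\ge 1$ over the allocation window. Consequently both $e^{\lambda S_t}$ and $e^{\lambda(M-S_t)}$ are submartingales while $S_t\in(0,M)$.

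Applying optional stopping at $\tau=\inf\{t>0:S_t\in\{0,M\}\}$ then yields the key estimates. Starting from a state one step into the interior from $M$ (reached with probability $1/2$ when $B_1=0$), the submartingale $e^{\lambda(M-S_t)}$ produces $\Pr(S_\tau=0)\ge (e^{\lambda/2}-1)e^{-\lambda M}$, hence $q_M=1-p_M\ge e^{-O(\Delta M)}$; a symmetric argument on $e^{\lambda S_t}$ gives $q_0\ge e^{-O(\Delta M)}$. Separately, optional stopping on the quadratic martingale $Q_t^2-\sigma^2 t$ from \Cref{lem:Random_walk_equivalence_1}, with $\sigma^2=\Var[Z_t]=\Theta(1)$ by the Bernoulli structure, bounds $E(M),E(0)\le O(M^2)$. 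Plugging into the identity $H_M+H_0=(q_0+q_M)/(q_0 E(M)+q_M E(0))$ obtained from \Cref{Thm:Main_expression_for_inefficiency}, the $q$-terms factor out and give $H_M+H_0\ge \Omega(1/M^2)$; for $\Delta M \gtrsim \log M$ this is already at least $e^{-O(\Delta M)}$, and the complementary regime $\Delta M = O(\log M)$ is handled by the trivial bound $e^{-O(\Delta M)}=\Omega(M^{-O(1)})$. Combining with \Cref{Cor:Main_expression_for_inefficiency} gives $\Deff\ge \min(h,b)\cdot\Omega(H_M+H_0)\ge e^{-O(\Delta M)}$.

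The main obstacle is that \Cref{Thm:Main_expression_for_inefficiency} is stated for time-homogeneous policies, while this theorem concerns arbitrary (possibly history- and time-dependent) allocation rules, so $(S_t)$ is not in general a Markov chain. I expect to handle this by redoing its renewal-reward decomposition directly at the sample-path level: the submartingale properties used in Steps~2--3 depend only on the one-step conditional expectation of $Z_t$ given $\mathcal F_{t-1}$ and transfer unchanged, and one can decompose trajectories into successive boundary-hit cycles to obtain an analogous identity (up to multiplicative constants) linking the long-run boundary fraction to the hitting-time/hitting-probability statistics derived above.
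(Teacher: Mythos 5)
Your proposal contains a decisive error at the step where you bound the expected exit times. You claim that optional stopping applied to $Q_t^2-\sigma^2 t$ yields $E(M),E(0)\le O(M^2)$ for an arbitrary admissible policy. But $Q_t^2-\sigma^2 t$ is a martingale only when the conditional drift $\E[Z_t\mid\mathcal F_{t-1}]$ is zero; for a policy whose allocation depends on the state (which is exactly the interesting case), the increment satisfies $\E[Q_{t+1}^2-Q_t^2\mid\mathcal F_t]=2Q_t\,\E[Z_{t+1}\mid\mathcal F_t]+\E[Z_{t+1}^2\mid\mathcal F_t]$, and a mean-reverting drift makes the first term large and negative, so no upper bound on $\E[\tau]$ follows. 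Indeed, under the paper's own \ALG policy one has $E(M)=e^{\Theta(\Delta M)}\gg M^2$. Worse, if your chain of inequalities were valid it would give $H_M+H_0=\Omega(1/M^2)$ for \emph{every} policy, contradicting \Cref{thm:bang-bang_inefficiency_bound}. The entire difficulty of the lower bound is precisely to show that $E(M),E(0)$ cannot exceed $e^{O(\Delta M)}$, and your proposal supplies no mechanism for that. A second, independent gap is your reliance on \Cref{Thm:Main_expression_for_inefficiency}: for history-dependent (let alone anticipatory) policies the process $S_t$ is not Markov, the quantities $p_M,p_0,E(M),E(0)$ are not well-defined scalars, and the renewal cycles are not i.i.d., so the identity $H_M+H_0=(q_0+q_M)/(q_0E(M)+q_ME(0))$ has no obvious analogue; your closing paragraph acknowledges this but offers only a sketch of a fix. (A smaller issue: your Step-1 reduction asserts $\Deff=\Omega(1)$ whenever the allocation window sits entirely on one side of $1/2$, but a window with endpoint exactly $1/2$ admits a policy with vanishing drift and $\Deff=\Theta(1/M)$, so that case analysis also needs repair.)

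For contrast, the paper's proof avoids hitting-time machinery entirely. It partitions time into epochs of length $L=\Theta(M/\Delta)$, observes that over any epoch the cumulative allocation lies in $[aL,(a+\Delta)L]$ while the cumulative supply is $\mathrm{Binomial}(L,1/2)$, and concludes that an overflow (resp.\ stockout) of at least one unit is forced whenever the supply exceeds $(a+\Delta)L+M$ (resp.\ falls below $aL$). A binomial anti-concentration bound (Proposition 7.3.2 of Alon--Spencer) then shows one of these events has probability $e^{-O(\Delta M)}$ per epoch, and the strong law over epochs converts this into a bound on $\overline W$ or $\overline V$. This argument uses only the window constraint on allocations and therefore applies verbatim to arbitrary, even anticipatory, policies — which is the robustness your martingale route cannot deliver.
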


We highlight two key features of \Cref{thm:main_lower_bound}. First, the instance we construct for our lower bound highlights that the Pareto frontier is driven by {\it supply}, rather than demand, uncertainty. Second, the proof of our results considers the wider class of \emph{anticipatory} policies wherein the decision-maker has full knowledge of future randomness. This shows that the impossibility is due to inevitable stochastic fluctuations in the environment, rather than an information-theoretic loss due to the fact that the policy is non-anticipatory. 

\begin{proof}
Fix $a > 0$, and consider any allocation policy such that $A_t \in [a, a+\Delta]$. 
We partition the time horizon into epochs of length $L \in \mathbb{N}$. For $k \in \mathbb{N}$, let $\Bk$ be the total budget arrival in epoch $k$; note that $\Bk\sim\text{Binomial}(L,1/2)$. Moreover, let $\Yplusk \coloneqq \Bk - (a+\Delta)L$ and $\Yminusk  \coloneqq  \Bk - aL$. 

Notice that the policy necessarily incurs an overflow of at least 1 whenever $\Yplusk \geq M+1$. This follows from the fact that the unreflected inventory level $Q_t \geq S_0 + \Yplusk$ at the end of epoch $k$ (since $a+\Delta$ is the {\it most} it can allocate in each period). 
Therefore, on this event $Q_t \geq M+1$. We use this to lower bound $\overline{W}$ as follows:
\begin{align*}
\label{eq:w_v_lower_bounds}
\overline{W} & = \limsup_{T \rightarrow \infty} \frac{1}{T} \sum_{t=1}^T W_t \geq \lim_{T \rightarrow \infty} \frac{1}{T} \sum_{k=1}^{\floor{T / L}} \frac{\Ind{\Yplusk \geq M+1}}{\floor{T/L}} = \Pr(\Yplusk \geq M+1),
\end{align*}
where the final equality follows from the Strong Law of Large Numbers.

In a similar vein, the policy incurs an underage of at least one whenever $\Yminusk \leq -1$. Therefore:
\begin{align*}
\overline{V} & = \limsup_{T \rightarrow \infty} \frac{1}{T} \sum_{t=1}^T V_t \geq \lim_{T \rightarrow \infty} \frac{1}{T} \sum_{k=1}^{\floor{T/L}} \frac{\Ind{\Yminusk \leq -1}}{\floor{T/L}} = \Pr(\Yminusk \leq -1).
\end{align*}

Hence, it suffices to lower bound $\Pr(\Yplusk \geq M+1)$ {or} $\Pr(\Yminusk \leq -1)$ {to obtain a lower bound on $\Deff$}. We partition our analysis into three cases.

\textbf{Case 1:} $M+1+(a+\Delta)L \leq L/2$. Note that we can always choose $L$ to satisfy this inequality as long as $a < \frac12-\Delta$. In this case:
\[
\Pr(\Yplusk \ge M{+}1)
= \Pr(\Bk \geq M+1 + (a + \Delta)L) \geq \Pr(\Bk \geq L/2) \geq \frac12,
\]
since $\Bk \sim \text{Binomial}(L, \tfrac12)$ is symmetric around its mean $L/2$. Therefore, in this case we have $\overline{W} \geq \frac12$.

\medskip
\textbf{Case 2:} $-1 + aL \geq L/2$. Note that we can always choose $L$ to satisfy this inequality as long as $a > \frac12$. In this case, we have:
\[
\Pr(\Yminusk \le -1) = \Pr(\Bk \leq aL - 1) \geq \Pr(\Bk \leq L/2) \geq \frac12,
\]
again, by symmetry of $\Bk$. Therefore, $\overline{V} \geq \frac{1}{2}$. 

\medskip 

Cases 1 and 2 imply that, if $a > \frac12$ or $a < \frac12-\Delta$, we can always choose $L$ such that the policy incurs constant inefficiency. It remains to prove the claim for $a \in \left[\frac12-\Delta,\frac12+\Delta\right]$.

\medskip 

\textbf{Case 3:} $a \in \left[\frac12-\Delta,\frac12+\Delta\right]$. To obtain positive lower bounds on $\Pr(\Yminusk \le -1)$ and $\Pr(\Yplusk \geq M+1)$, we require $aL-1 \geq 0$ and $M+1+(a+\Delta)L \leq L$, which hold for $L \geq \max\left\{\frac1a, \frac{M+1}{1-a-\Delta}\right\}$, which is satisfied for any $L \geq \frac{M+1}{\frac12-2\Delta}$.

We bound $\Pr(\Yminusk \leq -1)$. We have:
\begin{align*}
\Pr(\Yminusk \leq -1) = \Pr(\Bk \leq aL-1) = \Pr(L-\Bk \geq L(1-a)+1) =  \Pr(L-\Bk \geq L/2+L(1/2-a)+1) \\ \geq \Pr(L-\Bk \geq L/2+L\Delta+1),
\end{align*}
where the inequality follows from $a \geq \frac12-\Delta$. By symmetry, $L-\Bk\sim \text{Binomial}(L, 1/2)$.

We use the following bound on the Binomial tail.
\begin{proposition}[Adapted from Proposition 7.3.2 in \citet{alon2016probabilistic}]\label{prop:binomial-tail}
Suppose $L$ is even. Then, for any integer $t \in (0, L/8]$:
\[
\Pr\left(\Bk \,\ge\, \tfrac{L}{2}+t\right) \;\ge\; \tfrac{1}{15}\, e^{-16t^2/L}.
\]
\end{proposition}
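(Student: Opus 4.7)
The plan is to establish a termwise lower bound on the binomial pmf in a neighborhood of $L/2+t$ via Stirling's formula, and then sum $\Theta(\sqrt{L})$ consecutive terms to absorb the $1/\sqrt{L}$ normalization. This is the standard strategy behind tail lower bounds for sums of Bernoulli$(1/2)$ random variables (see Alon--Spencer, Ch.~7).

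First I would apply Stirling's formula $n! = \sqrt{2\pi n}(n/e)^n e^{\lambda_n}$ with $\lambda_n\in\bigl(1/(12n+1),\,1/(12n)\bigr)$ to the central term to obtain
\[
\binom{L}{L/2} 2^{-L} \;\ge\; \frac{c_0}{\sqrt{L}},
\]
for an explicit absolute constant $c_0$ (close to $\sqrt{2/\pi}$). Next, for $0 \le i \le L/8$, I would control the offset via the identity
\[
\frac{\binom{L}{L/2+i}}{\binom{L}{L/2}} \;=\; \prod_{j=0}^{i-1}\frac{L/2-j}{L/2+j+1},
\]
taking logarithms and applying the elementary inequality $\log(1-x)\ge -x-x^2$ valid on $x\in[0,1/2]$. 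The restriction $i\le L/8$ ensures each factor $(2j+1)/(L/2+j+1)$ stays at most $1/2$, so summing the termwise bound and converting back yields
\[
\frac{\binom{L}{L/2+i}}{\binom{L}{L/2}} \;\ge\; e^{-\alpha\, i^2/L}
\]
for some explicit absolute constant $\alpha$. Combining the two estimates gives $\pr(\Bk=L/2+i)\ge (c_0/\sqrt{L})\,e^{-\alpha i^2/L}$ for every integer $i\in[0,L/8]$.

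To absorb the $1/\sqrt{L}$ prefactor, I would sum this pointwise bound over a block of $\lfloor\sqrt{L}\rfloor$ consecutive integers starting at $t$. By monotonicity of $i\mapsto e^{-\alpha i^2/L}$ on $[t,\infty)$,
\[
\pr(\Bk \ge L/2+t) \;\ge\; \sum_{i=t}^{\,t+\lfloor\sqrt{L}\rfloor-1}\frac{c_0}{\sqrt{L}}\,e^{-\alpha i^2/L} \;\ge\; c_0\,e^{-\alpha(t+\sqrt{L})^2/L}.
\]
Using the crude bound $(t+\sqrt{L})^2\le 2t^2+2L$ converts this into $c_0\,e^{-2\alpha}\,e^{-2\alpha t^2/L}$, which has the required shape $C_1\,e^{-C_2 t^2/L}$ claimed in the proposition.

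The main obstacle is the careful constant-tracking needed to hit exactly $(C_1,C_2) = (1/15, 16)$: this requires (i) an explicit constant in Stirling, (ii) choosing $\alpha$ by cleanly bounding the sum $\sum_{j<i}\bigl(\tfrac{2j+1}{L/2+j+1}+(\tfrac{2j+1}{L/2+j+1})^2\bigr)$ by $\alpha i^2/L$, and (iii) absorbing the cross term from expanding $(t+\sqrt{L})^2$. A secondary technicality arises when $t$ is near the upper endpoint $L/8$, since the window $[t,t+\lfloor\sqrt{L}\rfloor]$ may leak past the range in which the ratio estimate holds. The generous slack in the target constants (16 is far from the asymptotically optimal $2$) leaves room to use a shorter summation window, or to handle the regime $t\in(L/16,L/8]$ by a direct case analysis on a small number of central terms.
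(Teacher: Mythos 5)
The paper does not actually prove this proposition --- it is imported verbatim (as ``Adapted from Proposition 7.3.2'') from Alon and Spencer, so there is no in-paper argument to compare against. Your strategy is the standard textbook route behind that result (pointwise Stirling bound on the central binomial coefficient, a ratio estimate $\binom{L}{L/2+i}/\binom{L}{L/2}\ge e^{-\alpha i^2/L}$, then summing a block of consecutive terms to cancel the $1/\sqrt{L}$ normalization), and in outline it is sound.

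However, the chain of inequalities as you have written it does not establish the proposition with the stated constants, and the issue is more than cosmetic constant-tracking. Your ratio estimate naturally gives $\alpha\approx 3$ (the linear part of $\sum_{j<i}\frac{2j+1}{L/2+j+1}$ already contributes $2i^2/L$), so after the step $(t+\sqrt{L})^2\le 2t^2+2L$ your prefactor is $c_0e^{-2\alpha}\approx 0.8\cdot e^{-6}\approx 2\times 10^{-3}$, which is an order of magnitude below $1/15$; the exponent you obtain ($2\alpha\approx 6$) is comfortably below $16$, but you cannot trade that slack for prefactor without changing the argument, because the two constants enter the final bound independently. Concretely, for $t=O(\sqrt{L})$ the right-hand side of the proposition is of constant order (up to $1/15$ at $t=1$), and no bound of the form $c_0e^{-2\alpha}e^{-2\alpha t^2/L}$ with $c_0e^{-2\alpha}<1/15$ can dominate it, so the small-$t$ regime genuinely requires a separate direct estimate (e.g., $\Pr(\Bk>L/2)\ge\frac{1}{2}(1-\binom{L}{L/2}2^{-L})\ge\frac14$ and interpolation), not just ``room from the generous slack.'' Likewise the window $[t,t+\lfloor\sqrt{L}\rfloor)$ must be checked against the range where the ratio estimate is valid when $t$ is near $L/8$. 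These repairs are all feasible --- a window of length $\min\{t,\sqrt{L}\}$ or $[t,2t)$ together with a two- or three-case split on the size of $t$ closes the argument --- but as written the displayed conclusion does not follow, so the case analysis you defer is a required part of the proof rather than optional polish.
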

Letting $t = L\Delta+1$, it suffices to show that $\frac{(L\Delta+1)^2}{L} = O(M\Delta)$, which holds for $L = cM/\Delta$, for some $c > 0$. (Without loss of generality, we assume $L$ is even.)

We conclude by showing that there exists $c > 0$ such that (i) $L\Delta+1 \leq L/8$, and (ii) $L \geq \frac{M+1}{\frac12-2\Delta}$. For the first property, we have: 
\begin{align*}
L\left(\frac18-\Delta\right) \geq 1 \iff c \geq \frac{\Delta}{M\left(\frac18-\Delta\right)}.
\end{align*}
The above holds for any $c \geq 8$, as long as $\Delta \leq \frac{1}{9}$. For the second property, we require $c$ such that:
\begin{align*}
c \geq \frac{(M+1)\Delta}{M\left(\frac12-2\Delta\right)},
\end{align*}
which holds for any $c \geq 0.8$, for $\Delta \leq \frac19$.  
\end{proof}

\section{Extension to Multiple Resources, Multiple Agent Types}
\label{sec:extensions}

Up to this point, our analysis has considered a single-resource, homogeneous-agent setting. In this section, we outline how our results extend naturally to more complex models involving multiple resources and agent types. While the proofs require technical modifications of the arguments already presented, the core insights remain unchanged.  Since the single-resource setting is a special case of this more general model, the lower bound from \cref{thm:main_lower_bound} still applies.

\paragraph{Generalized Model.} The decision-maker manages a warehouse with total capacity $M > 0$, and $K$ different types of resources. Initially there are $S_{0,k} \in \mathbb{R}_{\geq 0}$ units of each resource $k \in [K]$ (with $\norm{S_0}_1 \leq M$).  At the beginning of each period, the decision-maker receives an additional budget of $B_t \in \mathbb{R}^K_{\geq 0}$ resources, drawn i.i.d. from a distribution $\BDist$ supported on a subset of $[0,B_{\max}]^K$, with mean $\mu_{\BDist} \in \mathbb{R}^K_{\geq 0}$ and strictly positive variance for each resource.  She then observes a random number $N_{t,\theta} \in \mathbb{N}$ new agents of type $\theta$.  Agents of type $\theta$ have utility 
$u(x, \theta) = w_{\theta}^\top x$ for an allocation $x \in \mathbb{R}^K_{\geq 0}$, where $w_{\theta} \in \mathbb{R}^K_{>0}$ corresponds to the preference weights that type $\theta$ assigns to each resource. 
We assume $N_{t,\theta}$ is drawn i.i.d. from a distribution $\mathcal{N}_{\theta}$ supported on a subset of $[0,N_{\max}]$. We use $\Theta$ to denote the set of all possible types, and let $\mu_{\NDist, \theta} \in \mathbb{R}_{\geq 0}$ denote the respective means of $(\mathcal{N}_{\theta}, \theta \in \Theta)$, with  $\Nmean = \sum_{\theta \in \Theta} \mu_{\mathcal{N}, \theta}$. The decision-maker must then decide on an allocation $A_{t, \theta} \in \mathbb{R}^{K}_{\geq 0}$ of the resources to allocate to each of the $N_{t,\theta}$ agents.  If the on-hand inventory exceeds $M$ at the end of the period, the decision-maker decides which of the resources to throw out in order to satisfy the capacity constraint. Let $W_{t,k}$ and $V_{t,k}$ respectively denote the overflow and stockout quantities of resource $k$ at the end of period $t$. As before, these quantities respectively represent the amount of resource $k$ wasted or purchased via an outside option at the end of the period. Formally:
\begin{equation*}
W_{t,k} = (S_{t-1,k} + B_{t,k} - \sum_{\theta} N_{t, \theta} A_{t,\theta,k}  - S_{t,k})^+ \quad,\quad V_{t,k} = (S_{t,k} - (S_{t-1,k} + B_{t,k} - \sum_{\theta} N_{t, \theta} A_{t,\theta,k} ))^+.
\end{equation*}

We have the following metrics of inefficiency and envy in the multi-resource setting.
\begin{definition}[Inefficiency]
\label{def:efficiency_multi}
Given per-unit overflow cost $h \in \mathbb{R}_{> 0}$ and per-unit stockout cost $b \in \mathbb{R}_{> 0}$, the \emph{inefficiency} of a sequence of allocation decisions $(A_{t,\theta}, t\in \N, \theta\in\Theta)$ is given by:\footnote{It is a straightforward extension to have different overflow and stockout costs $h_k$ and $b_k$ per resource type.}
\begin{equation*}
\Deff = \lim \sup_{T \rightarrow \infty} \left( \frac{1}{T} \sum_{t=1}^T \sum_{k=1}^K hW_{t,k} + bV_{t,k} \right) =  h\overline{W} + b\overline{V},
\end{equation*}
where
\begin{equation*}
\overline{W} = \lim \sup_{T \rightarrow \infty} \frac{1}{T} \sum_{t=1}^T \sum_{k=1}^K W_{t,k} \quad,\quad \overline{V} = \lim  \sup_{T \rightarrow \infty} \frac{1}{T} \sum_{t=1}^T \sum_{k=1}^K V_{t,k}.
\end{equation*}
\end{definition}

\smallskip 

\begin{definition}[Ex-post envy]
\label{def:fairness_multi}
The \emph{ex-post envy} of allocation decisions \mbox{$(A_{t,\theta}, t\in \N, \theta\in\Theta)$} is given by:
\begin{equation*}
\Dfair \;=\; \sup_{t,\theta, t', \theta': N_{t, \theta} > 0, N_{t', \theta'} > 0} \, | u(A_{t', \theta'}, \theta) - u(A_{t,\theta}, \theta)|.
\end{equation*}
\end{definition}

\paragraph{The \ALG policy.} We now describe the modification to the \ALG policy for the multi-resource, multi-type setting.  At a high level, we split the capacity $M$ into $K$ ``virtual stores'', each with capacity $M/K$.  We then run \ALG independently on every resource $k \in [K]$ using the same fairness tolerance $\Delta$.  Formally, for fixed $\Delta > 0$, the \ALG($\Delta$) policy allocates:
\begin{equation}
\label{eq:bang_bang_pt_2_multi}
A_{t,\theta,k} = \begin{cases}
\frac{\mu_{\mathcal{B},k}}{\mu_{\mathcal{N}}} - \frac{\Delta}{2} & S_{t-1,k} < \frac{M}{2K} \\
\frac{\mu_{\mathcal{B},k}}{\mu_{\mathcal{N}}} + \frac{\Delta}{2} & S_{t-1,k} \geq \frac{M}{2K}.
\end{cases}
\end{equation}
At the end of the period, the policy chooses any $S_{t,k}$ that minimizes the instantaneous overflow and stockout costs, such that $\norm{S_t}_1 \leq M$. Note that for $K = 1$, we recover our original \ALG($\Delta$) policy. Moreover, it is easy to see that this policy yields an envy bound of $O(\Delta)$ since:
\[
\Dfair = \max_{t,\theta, t’, \theta': N_{t, \theta} > 0, N_{t', \theta'} > 0} | u(A_{t', \theta'}, \theta) - u(A_{t,\theta}, \theta)| \leq \max_{\theta} \sum_{k} w_{\theta,k} \Delta = O(\Delta).
\]

The following theorem establishes that the fundamental envy-inefficiency trade-off from the single-resource setting is unchanged.
\begin{restatable}{theorem}{ExtensionThm}
\label{thm:extensions}
Fix $\Delta \in (0, 2\min_k \frac{\mu_{\BDist,k}}{\Nmean})$, and suppose there exist strictly positive $\epsilon$ and $\delta$ such that, for all $k \in [K]$:
\[
    \Pr\left(B_{t,k} - \left(\sum_\theta N_{t,\theta}\right)(\mu_{\mathcal{B},k} / \Nmean + \Delta / 2) \geq \epsilon\right) \geq \delta \quad \text{ and } \quad \Pr\left(B_{t,k} - \left(\sum_\theta N_{t,\theta}\right)(\mu_{\mathcal{B},k} / \Nmean - \Delta / 2) \leq - \epsilon\right) \geq \delta.
\]
Then the \ALG($\Delta$) policy {achieves}
    $$\overline{W} = e^{-\Omega(\Delta M)} \;\;\;\;\;\;\;\; \text{ and }\;\;\;\;\;\;\;\; \overline{V} = e^{-\Omega(\Delta M)}.$$
    As a result, $\Deff = e^{-\Omega(\Delta M)}$.  Moreover, the static policy that allocates $\frac{\mu_{\BDist,k}}{\mu_{\NDist}}$ of resource $k$ to each type $\theta$ individual in each period {achieves}
    {$$\overline{W} = \Theta\left( \frac{1}{M} \right) \;\;\;\;\;\;\;\; \text{ and }\;\;\;\;\;\;\;\; \overline{V} = \Theta\left(\frac{1}{M}\right).$$
    As a result, $\Deff = \Theta(1 / M)$.} 
\end{restatable}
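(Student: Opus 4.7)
The plan is to reduce the multi-resource, multi-type problem to $K$ independent copies of the single-resource problem analyzed in Sections~\ref{ssec:fixed_policy}--\ref{ssec:Bang-Bang_Policy}, each with effective capacity $M/K$. The key structural observation is that under both the \ALG($\Delta$) policy defined in~\eqref{eq:bang_bang_pt_2_multi} and the static proportional policy, the per-capita allocation $A_{t,\theta,k}$ depends only on the resource index $k$ (and on $S_{t-1,k}$ in the Bang-Bang case), and not on the type $\theta$. Writing $\widetilde{N}_t := \sum_{\theta} N_{t,\theta}$, which is i.i.d.\ with mean $\Nmean$, the per-resource drift becomes $Z_{t,k} = B_{t,k} - \widetilde{N}_t A_{t,k}$, i.e., exactly the drift of a single-resource system with budget equal to the $k$-th marginal of $\BDist$, demand $\widetilde{N}_t$, and store capacity $M/K$.

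First I would verify that the hypotheses of Theorem~\ref{thm:bang-bang_inefficiency_bound} are inherited by each per-resource subsystem: the condition $\Delta \in (0,\, 2\min_k \mu_{\BDist,k}/\Nmean)$ ensures $A_{t,\theta,k} > 0$ for every $k$, and the stated moment conditions on the per-resource drifts are exactly~\eqref{eq:thm-cond} with $N_t$ replaced by $\widetilde{N}_t$. Applying Theorem~\ref{thm:bang-bang_inefficiency_bound} to each resource independently with capacity $M/K$ yields $\overline{W}_k, \overline{V}_k = e^{-\Omega(\Delta M/K)}$. Since $K$ is a fixed constant, the $1/K$ factor absorbs into the $\Omega(\cdot)$, and summing over the $K$ resources gives $\overline{W}, \overline{V} = e^{-\Omega(\Delta M)}$ and hence $\Deff = e^{-\Omega(\Delta M)}$. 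For the static proportional policy, the same decomposition applies but with mean-zero per-resource drifts, so Theorem~\ref{thm:Inefficient_for_fix_0_mean} gives $\overline{W}_k, \overline{V}_k = \Theta(K/M) = \Theta(1/M)$ per resource, and summing preserves the $\Theta(1/M)$ scaling.

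The main technical obstacle is handling the end-of-period capacity constraint $\norm{S_t}_1 \leq M$, which couples the resources. For the upper bounds, I would argue that the per-resource analysis above corresponds to a strictly more wasteful bookkeeping rule that keeps each $S_{t,k} \in [0,M/K]$ (a valid choice, since then $\norm{S_t}_1 \leq M$ automatically), so the policy's actual minimization at the end of the period can only reduce $\overline{W}$ relative to this bound; meanwhile, the per-resource allocations --- and hence all stockout events --- are determined \emph{before} the end-of-period step, so $\overline{V}$ is unaffected by the coupling. For the matching lower bound on the static policy, note that a stockout of resource $k$ depends only on $A_{t,\theta,k}$ and the realized supply and demand for resource $k$, and cannot be mitigated by any cross-resource reallocation; the $\Omega(1/M)$ lower bound from Theorem~\ref{thm:Inefficient_for_fix_0_mean} applied to a single resource therefore carries over to give $\overline{V} = \Omega(1/M)$, completing the $\Theta(1/M)$ characterization. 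Since the lower bound of \Cref{thm:main_lower_bound} applies verbatim (the single-resource instance is a special case of the generalized model with $K = 1$), this establishes that the same envy--inefficiency Pareto frontier holds in the generalized setting.
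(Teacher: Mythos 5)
Your proposal is correct and follows essentially the same route as the paper: decompose the warehouse into $K$ virtual stores of capacity $M/K = \Theta(M)$, observe that each is a one-dimensional system with budget $B_{t,k}$ and demand $\sum_\theta N_{t,\theta}$ to which Theorems~\ref{thm:bang-bang_inefficiency_bound} and~\ref{thm:Inefficient_for_fix_0_mean} apply, and bound the global inefficiency by the sum of the per-store inefficiencies. If anything, your treatment of the end-of-period coupling induced by $\norm{S_t}_1 \le M$ (the conservative bookkeeping argument and the per-resource stockout lower bound) is more explicit than the paper's, which simply asserts that the samplewise inefficiency is dominated by the virtual-store sum.
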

We prove the result in Appendix \ref{apx:extensions-proof}. In Appendix \ref{apx:extensions-improvement} we discuss how to incorporate type-dependent preferences in the allocation decisions.

\section{Numerical Experiments}
\label{sec:simulations}

In this section we perform extensive synthetic experiments to illustrate the robustness of our policies, particularly with respect to our modeling assumptions --- namely, that $(N_t, B_t)$ are bounded and i.i.d. over time.
The main questions we seek to answer through our experiments are:
\begin{itemize}[nosep,leftmargin=0.5cm]
    \item {(I, \cref{sec:experiment_m_scale,sec:experiments_delta_scale})} {\em Impact of Capacity and Fairness Constraints.} How does increasing $M$ and $\Delta$ impact the performance of the \ALG($\Delta)$ policy?
    \item {(II, \cref{sec:experiments_time_varying})} {\em Distribution Variability.}  How well do our policies perform across different distributions? In particular, what is the impact of the tail behavior of the demand and supply distributions on the envy-inefficiency trade-off?
    Do our insights hold when $(N_t, B_t)$ is time-varying?
\end{itemize}
We defer experiments treating the extension to multiple resources to \cref{app:experiments_multi_resources}.
Across all simulations we compare the static policy which allocates $\Bmean/\Nmean$ in each period (referred to as \Static) to the \ALG($\Delta$) policy, for various values of $\Delta$.  {Note that when $\Dfair=0$, \ALG($\Dfair$) reduces to \Static.}\footnote{See~\url{https://github.com/seanrsinclair/Inventory-Based-Shared-Resource-Allocation} for the code.} 

\paragraph{Experimental Setup.}  Across all experiments, we let $h = b = 1$ (results for other values of $(h,b)$ are similar and omitted as such).  {We test 20 values of the capacity $M$, equally spaced in $[10, 100]$, and 20 values of $\Dfair$, equally spaced in $[0, 0.5]$}.  We moreover consider the following supply and demand distributions:
\begin{itemize}
    \item {\bf Truncated Normal}: $N_t \sim \Norm(\Bmean, \sigma_b^2)^+$ and $B_t \sim \Norm(\Nmean, \sigma_n^2)^+$ as truncated normal random variables, with $\Bmean = \Nmean = 5$, $\sigma_b = \sigma_n = 1$.
    \item {\bf Poisson and Exponential}: $N_t \sim \Poisson(\Nmean)$, $B_t \sim \Exponential(1 / \Bmean)$ for $\Nmean = \Bmean = 5$.
    \item {\bf Time-Varying Normal}: $N_t \sim \Norm(\Bmean(t), \sigma_b^2)^+$ and $B_t \sim \Norm(\Nmean(t), \sigma_n^2)^+$ as truncated normal random variables.  {We assume $\Bmean(t)$ and $\Nmean(t)$ are periodic, with cycle length $C$ (i.e., $\Bmean(t) = \Bmean(t+C)$ for all $t$; similarly for $\Nmean(t)$).}
\end{itemize}
Note that our simulated supply and demand distributions have unbounded support (contrary to the boundedness assumption in our basic model); this allows us to test the robustness of our results to differences in tail behavior.

\paragraph{Algorithm Evaluation.} We run $100$ replications for each experiment.  To evaluate the algorithm's inefficiency $\Deff$, we assume a time horizon of $T = 10,000$ and average results over all replications. We plot $\log(\Deff)$ for all experiments, capping $\Deff$ at $10^{-4}$ to avoid numerical precision issues. 
See~\cref{sec:experiment_details} for further details. 

\subsection{Impact of $M$}
\label{sec:experiment_m_scale}

\begin{figure}[t]
\centering
\begin{subfigure}[t]{0.48\linewidth}
    \centering
    \includegraphics[width=\textwidth]{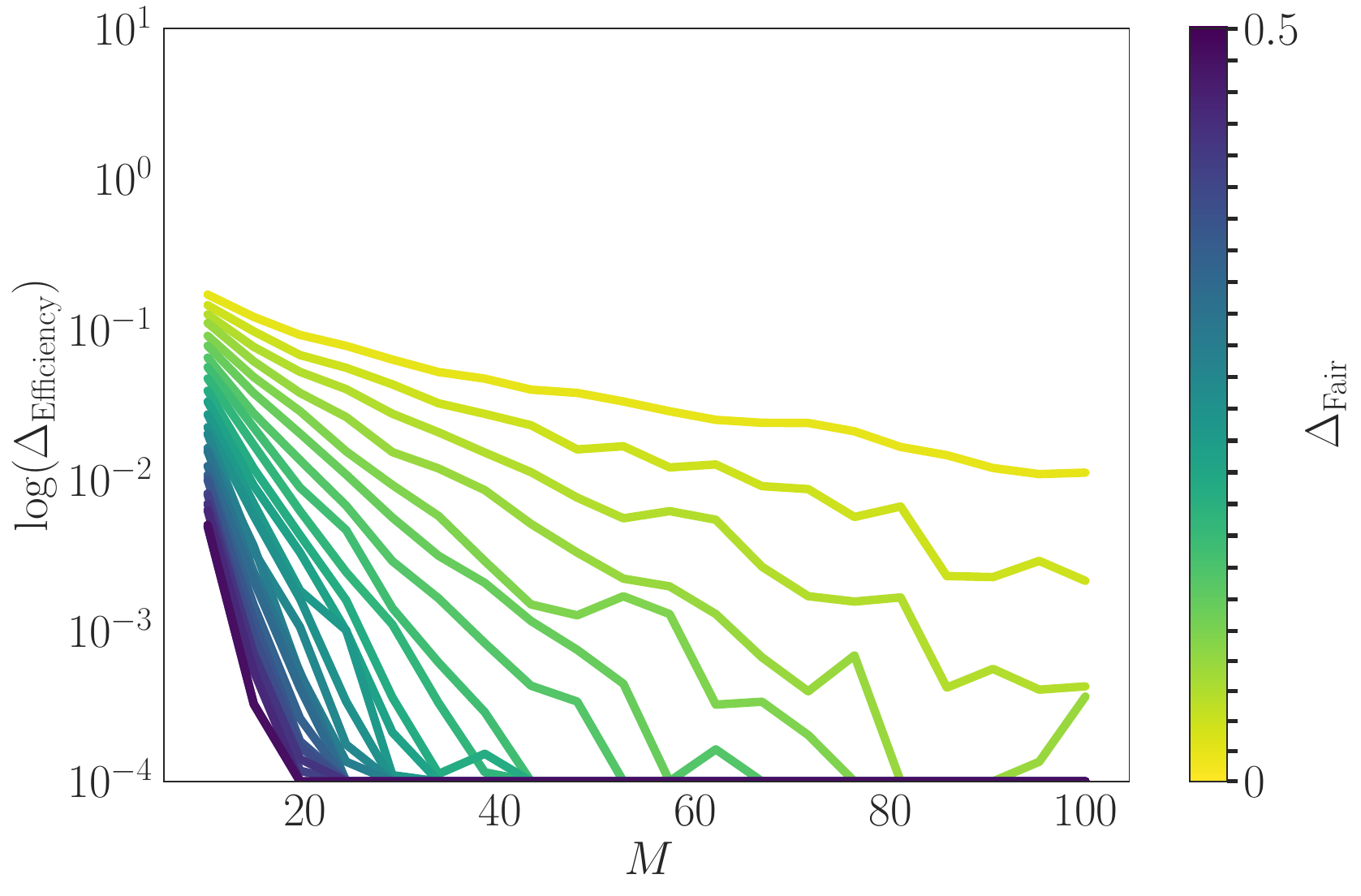}
    \caption{\small Truncated Normal demand and donations}
    \label{fig:normal_M}
\end{subfigure}
\hfill
\begin{subfigure}[t]{0.48\linewidth}
    \centering
    \includegraphics[width=\textwidth]{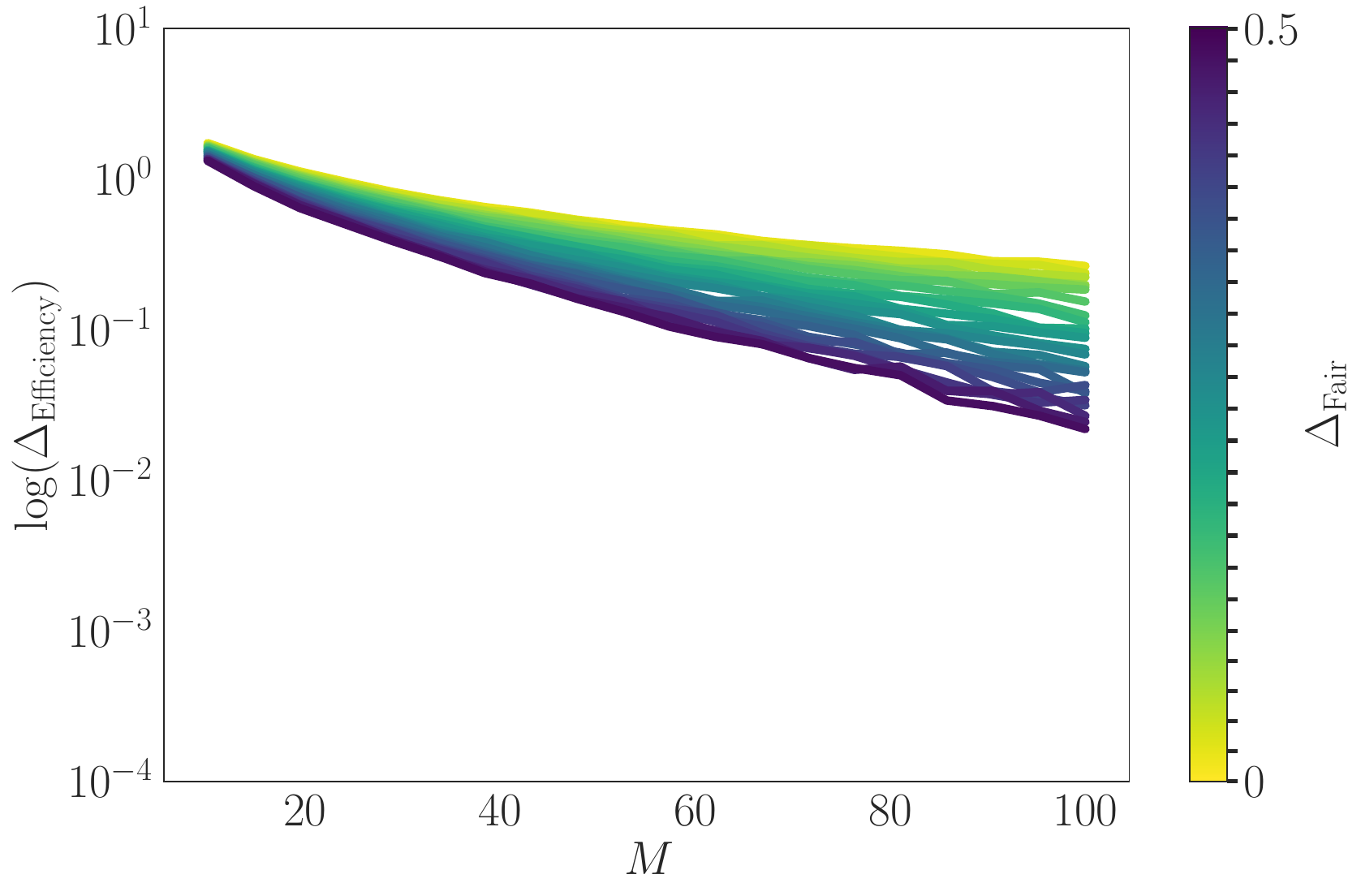}
    \caption{\small Poisson demand and Exponential donations}
    \label{fig:poisson_M}
\end{subfigure}
\caption{{$\log(\Deff)$ vs. $M$ under \ALG($\Dfair$), for different values of $\Dfair \in [0,0.5]$.} Under sub-Gaussian demand/supply, the phase transition close to $\Dfair=0$ is clearly visible; the trade-off is less dramatic under heavier-tailed demand/supply distributions in~\Cref{fig:poisson_M}.}
\label{fig:change_M}
\end{figure}

We first investigate the performance of our benchmark policies as we scale the capacity $M$. 

\paragraph{Summary.} \Cref{fig:change_M} illustrates the gains in efficiency as we scale $M$. For small values of $M$, inefficiency remains high across all values of $\Dfair$ due to frequent stockouts and overflows from the insufficient capacity.  As $M$ increases, we observe exponential improvement in efficiency. \Cref{fig:poisson_M} shows the exponential scaling is present, but much less pronounced under heavier tails.  Moreover, \(\Deff\) is markedly worse under heavier-tailed distributions, reflecting more frequent boundary hits (stockouts and overflows).

These plots highlight an important practical insight --- increasing storage capacity has {\em sharply diminishing returns} on efficiency unless fairness constraints are also adjusted. In low-capacity systems, strict fairness (i.e. $\Dfair = 0$) leads to severe inefficiency, whereas allowing a small amount of flexibility ($\Dfair > 0$) dramatically improves performance (as illustrated in \Cref{fig:normal_M}).  We emphasize that \ALG leverages this relaxation in fairness by adapting allocations based on inventory levels, achieving near-optimal efficiency while simultaneously maintaining bounded fairness.  In contrast, the static policy ensures fairness but incurs significant inefficiency.

\subsection{Impact of $\Dfair$}
\label{sec:experiments_delta_scale}

\begin{figure}[t]
\centering
\begin{subfigure}[t]{0.48\linewidth}
    \centering
    \includegraphics[width=\textwidth]{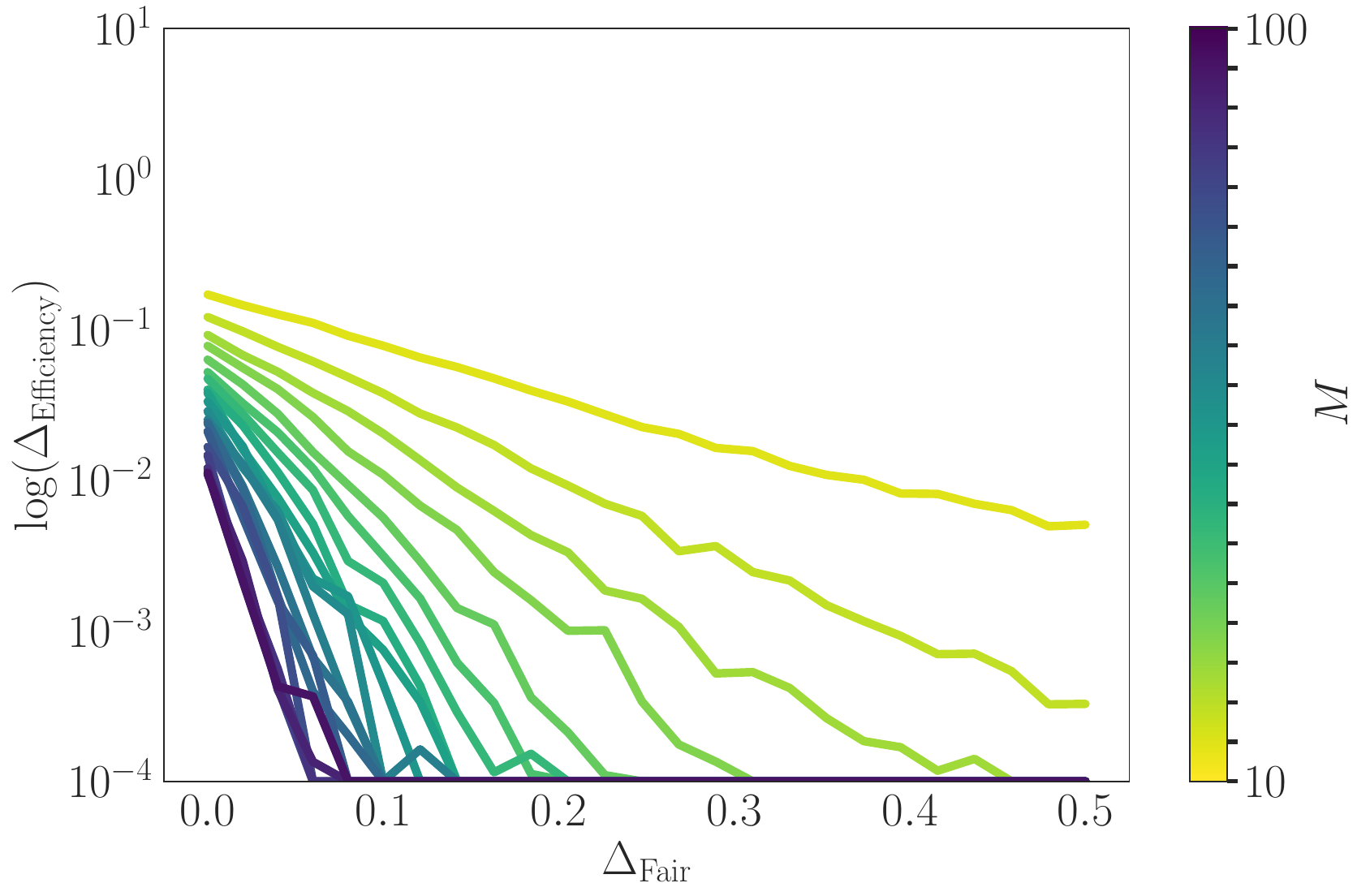}
    \caption{\small Truncated Normal demand and donations}
    \label{fig:normal_delta}
\end{subfigure}
\hfill
\begin{subfigure}[t]{0.48\linewidth}
    \centering
    \includegraphics[width=\textwidth]{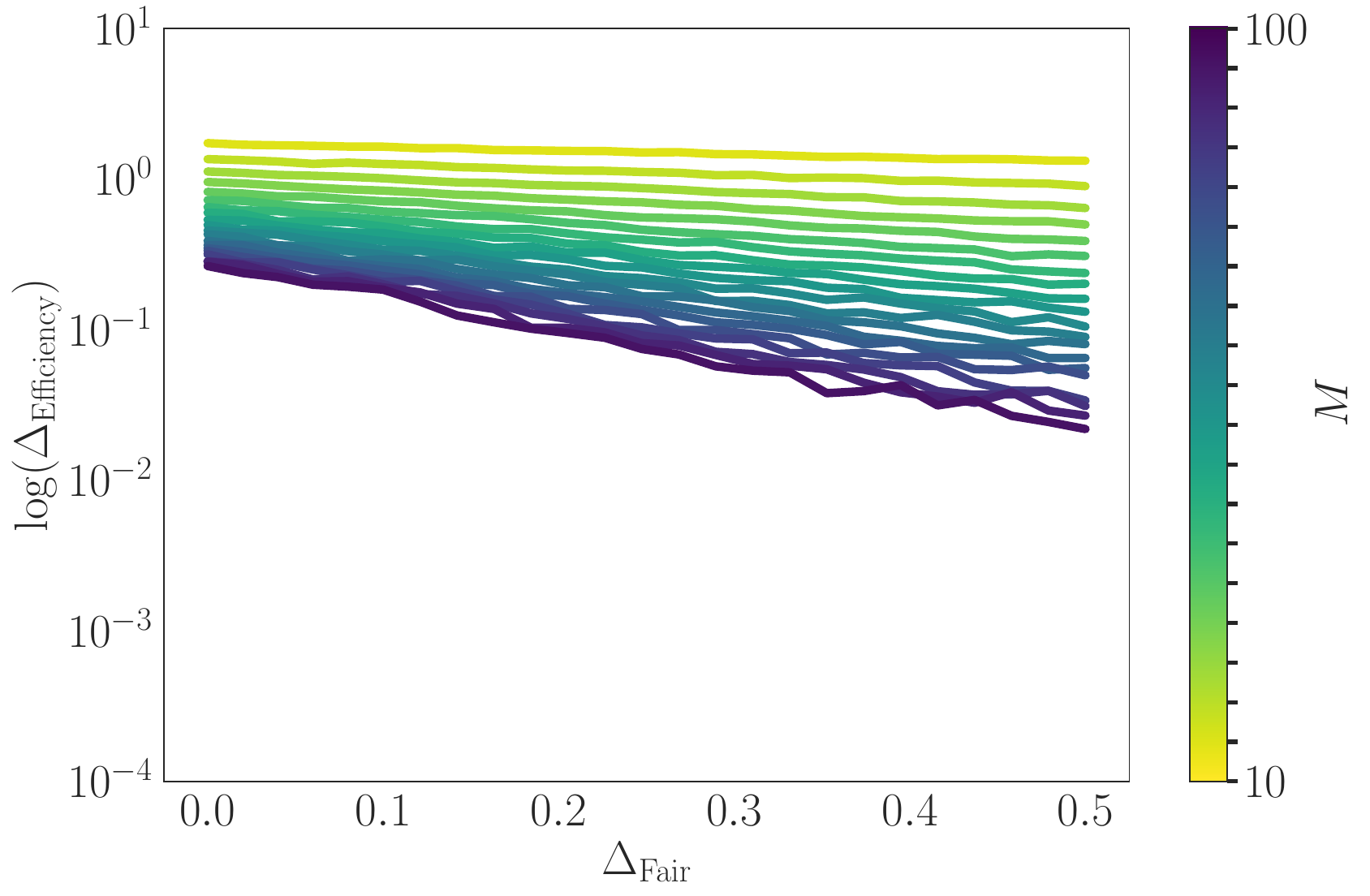}
    \caption{\small Poisson demand and Exponential donations}
    \label{fig:poisson_delta}
\end{subfigure}
\caption{{$\log(\Deff)$ vs. $\Dfair$ under $\ALG(\Dfair)$, for different values of $M \in [10,100]$.}  Under sub-Gaussian demand/supply, the phase transition close to $\Dfair=0$ is clearly visible; the exponential scaling is still present with heavier-tailed distributions, but without the sharp transition close to $\Dfair=0$.}
\label{fig:change_Delta}
\end{figure}

We next study the performance of the \ALG policy as a function of $\Dfair$, for various values of $\Dfair$.  We again evaluate the performance on truncated Normal demand and supply, as well as subexponential demand/supply distributions.

\paragraph{Summary.} \Cref{fig:change_Delta} illustrates the performance improvement for $\Deff$ under larger values of $\Dfair$ while holding the capacity constraint $M$ fixed.  The results again highlight a clear phase transition: when $\Dfair = 0$, inefficiency remains consistently high across all values of $M$, indicating the substantial cost of strict fairness.  However, even a slight relaxation of fairness ($\Dfair > 0$) yields significant efficiency improvements. Similar to our results in \cref{sec:experiment_m_scale}, we see that efficiency gains are nonlinear, with the largest improvements occurring  for small increases in $\Dfair$.  This suggests that a modest relaxation of fairness constraints can lead to disproportionately large efficiency gains.  {However, as before, these gains depend on the tails of the supply and demand distributions, and in the case of subexponential tails in~\Cref{fig:poisson_delta}, the gains in efficiency are more modest.}

\subsection{Impact of Time-Varying Distributions}
\label{sec:experiments_time_varying}

\begin{figure}[t]
\centering
\begin{subfigure}[t]{0.48\linewidth}
    \centering
    \includegraphics[width=\textwidth]{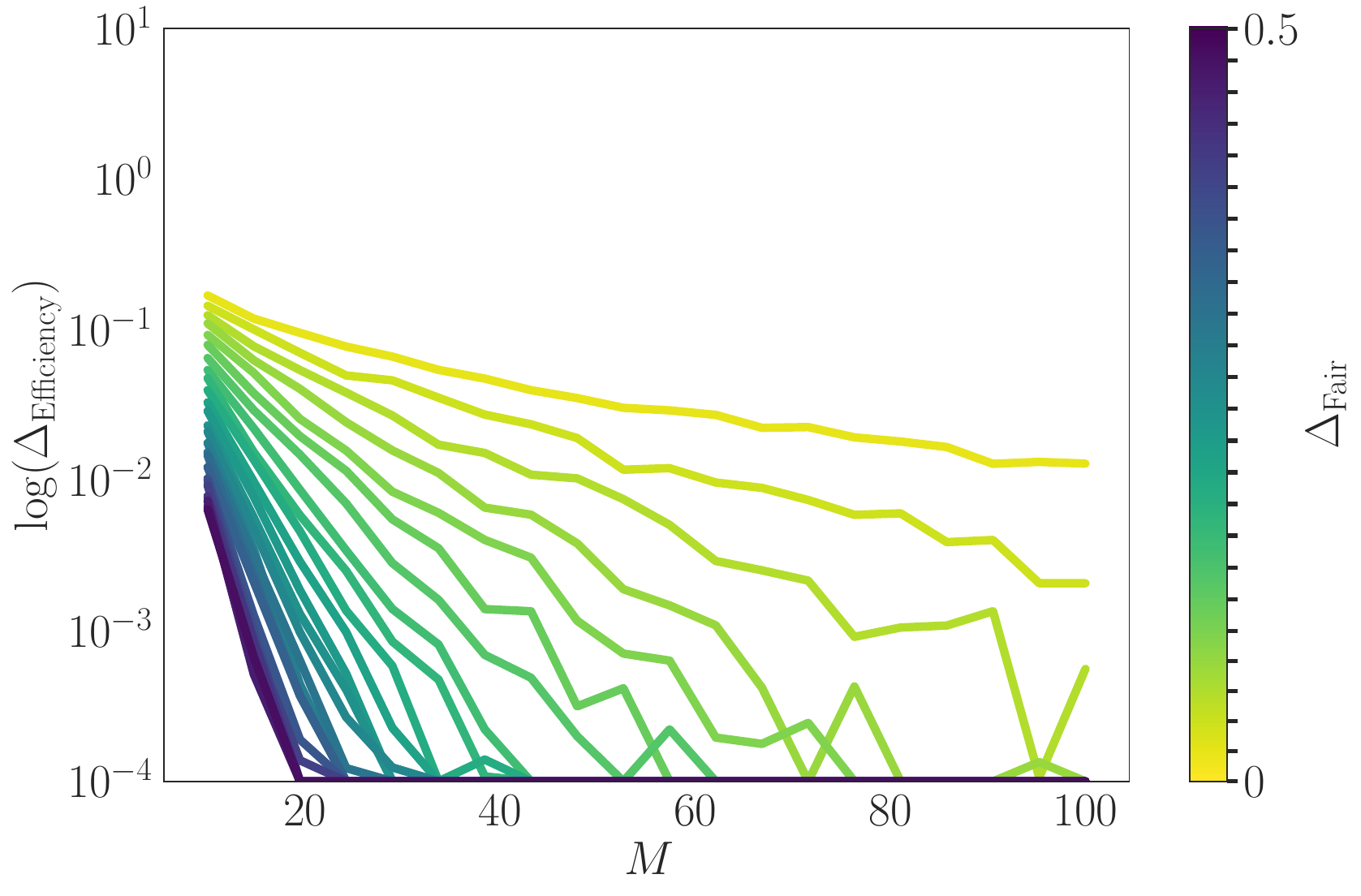}
    \caption{\small $\log(\Deff)$ vs $M$}
    \label{fig:time_varying_M}
\end{subfigure}
\hfill
\begin{subfigure}[t]{0.48\linewidth}
    \centering
    \includegraphics[width=\textwidth]{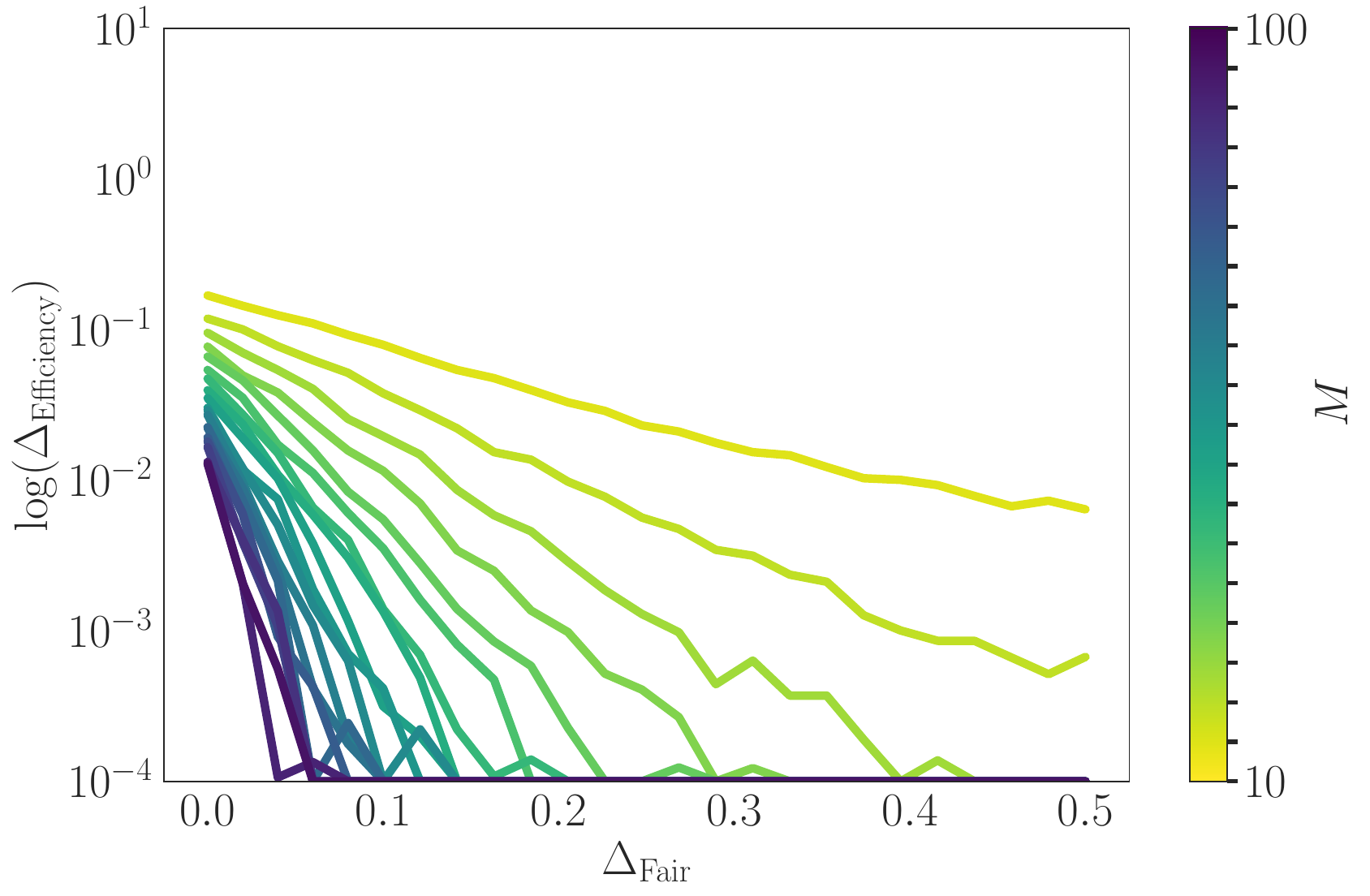}
    \caption{\small $\log(\Deff)$ vs $\Dfair$}
    \label{fig:time_vary_delta}
\end{subfigure}
\caption{{Impact of nonstationarity on inefficiency.} The exponential scaling and sharp phase transition near $\Dfair=0$ are still clearly visible under periodic supply and demand distributions.}
\label{fig:time_varying}
\end{figure}

While our theoretical results focused on settings where supply and demand are i.i.d. across time, real-world resource allocation problems often involve non-stationary demand/supply.
In this section, we test the robustness of our policy to nonstationary demand and supply processes.  Specifically, we evaluate how our proposed policies perform when donations and demand are periodic.
To account for this periodicity, we modify \ALG as follows:
\begin{equation}
    \label{eq:alg_modification}
    A_t = \begin{cases}
        \frac{\sum_{t \in C} \Bmean(t)}{\sum_{t \in C} \Nmean(t)} + \frac{\Dfair}{2} & S_{t-1} \geq \frac{M}{2} \\
        \frac{\sum_{t \in C} \Bmean(t)}{\sum_{t \in C} \Nmean(t)} - \frac{\Dfair}{2} & S_{t-1} < \frac{M}{2}.
    \end{cases}
\end{equation}
This is identical to the \ALG policy where the reference level $\Bmean / \Nmean$ is modified to be the cumulative supply divided by cumulative demand over a cycle.
We again evaluate the performance on values of $\Dfair \in [0,0.5]$ and $M \in [10, 100]$.

\paragraph{Summary.} \Cref{fig:time_varying} demonstrates that the key envy-inefficiency trade-off observed in the i.i.d. setting persists, even when donations and demand are cyclical.
Specifically, we find that the phase transition in inefficiency ($\Deff$) as a function of fairness ($\Dfair$) remains intact. As in the i.i.d. case, enforcing strict fairness ($\Dfair = 0$) leads to high inefficiency, whereas allowing a small relaxation ($\Dfair > 0$) significantly improves performance. Additionally, the benefits of increased capacity $M$ remain evident.

\section{Conclusion}
\label{sec:conclusion}

In this work, we studied fundamental trade-offs between fairness and efficiency in repeated resource allocation problems under stochastic replenishments with capacity constraints.  Our results demonstrate that while strict fairness ($\Dfair = 0$) leads to significant inefficiency, even a small relaxation of fairness ($\Dfair > 0$) can dramatically improve performance, with inefficiency decreasing exponentially as both ex-post envy and capacity increase.  We provided both theoretical guarantees and empirical validation, showing that our proposed allocation policy, \ALG, effectively balances fairness and efficiency.  

Our work suggests several open avenues, including extending the insights to non-i.i.d. donation and demand models, and exploring fairness notions beyond envy-freeness. From a technical perspective, while our results establish exponentially decaying inefficiency of our \ALG policy, they do not guarantee the best constant in the exponent. Establishing this (at least in the large-deviations sense), or finding better policies under other natural scalings, are important and promising open questions.

\medskip

{\noindent\textbf{Acknowledgments.} The authors would like to thank Ziv Scully for helpful comments on a preliminary version of our manuscript.}

\newpage

\bibliographystyle{informs2014} 
\bibliography{references} 


%
%
%

\newpage

\AtBeginEnvironment{APPENDICES}{%
  \renewcommand{\theHsection}{appendix.\Alph{section}}%
  \renewcommand{\theHsubsection}{\theHsection.\arabic{subsection}}%
  \renewcommand{\theHsubsubsection}{\theHsubsection.\arabic{subsubsection}}%
  \renewcommand{\theHfigure}{\theHsection.\arabic{figure}}%
  \renewcommand{\theHtable}{\theHsection.\arabic{table}}%
  \renewcommand{\theHequation}{\theHsection.\arabic{equation}}%
  \renewcommand{\theHtheorem}{\theHsection.\arabic{theorem}}%
}
\makeatother

\crefalias{section}{appendix}
\begin{APPENDICES}
\OneAndAHalfSpacedXI 
\section{Omitted Derivations from \Cref{ssec:warmup}}\label{mc:deriv}

\begin{proposition}
\label{prop:mc_stationary_measure}
Let $0 < \Delta < 2p$ with $p \leq 1/2$ and set $\rho = \frac{2p+\Delta}{2p-\Delta}$.  Then, the stationary distribution of the Markov chain in \Cref{fig:biased_gamblers_ruin_chain} is given by:
\[
\pi[0] = \pi[M] = \left(2 \cdot \frac{\rho^{M/2} - 1}{\rho - 1} + \rho^{M/2}\right)^{-1} \quad \pi[i] = \rho^{i}\pi[0] \, \text{ for } i \leq M/2 \quad \pi[i] = \rho^{M-i}\pi[0] \, \text{ for } i > M/2.
\]
\end{proposition}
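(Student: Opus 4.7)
The plan is to solve for the stationary distribution directly using detailed balance, which is applicable because the chain is a (perturbed) birth–death chain on a finite state space with strictly positive transition probabilities on each directed edge in the support graph, hence irreducible and reversible. Detailed balance will reduce the problem to a simple one-step recursion whose solution I can read off, after which normalization completes the computation.

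First, I would write down the detailed balance equations $\pi[i]\,P(i,i{+}1)=\pi[i{+}1]\,P(i{+}1,i)$ for every $i\in\{0,1,\ldots,M{-}1\}$, reading the transition probabilities off \Cref{fig:biased_gamblers_ruin_chain}. For $i<M/2$ the forward rate is $p+\Delta/2$ and the backward rate is $p-\Delta/2$, so $\pi[i{+}1]=\rho\,\pi[i]$ with $\rho=(2p+\Delta)/(2p-\Delta)$. For $i\geq M/2$ the forward rate drops to $p-\Delta/2$ while the backward rate becomes $p+\Delta/2$, so $\pi[i{+}1]=\rho^{-1}\pi[i]$. In particular, the ``boundary'' equation at $i=M/2$ already behaves like a right-side edge, which I will verify explicitly by reading the transition probabilities $P(M/2,M/2{+}1)=p-\Delta/2$ and $P(M/2{+}1,M/2)=p+\Delta/2$ off the figure (this is the only nonobvious case, but self-loop probabilities do not enter detailed balance so it reduces to the same recursion).

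Iterating these recursions starting from $\pi[0]$ gives $\pi[i]=\rho^{i}\pi[0]$ for $i\leq M/2$ and $\pi[i]=\rho^{M/2}\cdot\rho^{-(i-M/2)}\pi[0]=\rho^{M-i}\pi[0]$ for $i>M/2$, matching the stated formulas. The fact that $\pi[M]=\pi[0]$ then follows by setting $i=M$ in the second formula, consistent with the symmetry of the chain about $M/2$.

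For normalization, I would sum the geometric series:
\begin{equation*}
\sum_{i=0}^{M}\pi[i]
=\pi[0]\Bigl(\sum_{i=0}^{M/2}\rho^{i}+\sum_{i=M/2+1}^{M}\rho^{M-i}\Bigr)
=\pi[0]\Bigl(2\sum_{i=0}^{M/2-1}\rho^{i}+\rho^{M/2}\Bigr)
=\pi[0]\Bigl(2\cdot\tfrac{\rho^{M/2}-1}{\rho-1}+\rho^{M/2}\Bigr),
\end{equation*}
and setting this equal to $1$ yields the claimed expression for $\pi[0]=\pi[M]$. The only mild obstacle is bookkeeping at the middle state $M/2$: one must confirm that the kink in the drift (the self-loop probability changing from $1-2p$ to $1-2p+\Delta$) is exactly what detailed balance predicts, which it is since self-loops drop out of the detailed balance equations and the one-step ratio at $i=M/2$ is governed by $P(M/2,M/2{+}1)/P(M/2{+}1,M/2)=\rho^{-1}$.
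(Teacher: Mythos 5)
Your proof is correct and reaches the proposition by essentially the same route as the paper: both reduce to the one-step ratio $\pi[i+1]/\pi[i]=\rho^{\pm 1}$ on either side of $M/2$ and then normalize the resulting geometric sums. The only difference is cosmetic --- the paper writes out the full global balance equations and asserts their solution, whereas you obtain the ratios via detailed balance, which is the standard (and arguably cleaner) way to justify that solution for a finite birth--death chain, including the correct handling of the edge at $i=M/2$.
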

\begin{proof}
The steady-state equations for this birth-death chain are given by:
\begin{align*}
\pi[i] = 
\begin{cases}
 \pi[i]\left(1-\left(p+\frac\Delta2\right)\right) + \pi[i+1]\left(p-\frac\Delta2\right) \quad & i = 0\\
\pi[i-1]\left(p+\frac\Delta2\right) + \pi[i](1-2p) + \pi[i+1]\left(p-\frac\Delta2\right) \quad & i \in \{1,\ldots,M/2-1\}\\
 \pi[i-1]\left(p+\frac\Delta2\right) + \pi[i](1-2p+\Delta) + \pi[i+1]\left(p+\frac\Delta2\right) \quad & i = M/2\\
\pi[i-1]\left(p-\frac\Delta2\right) + \pi[i](1-2p) + \pi[i+1]\left(p+\frac\Delta2\right)  \quad &i \in \{M/2+1,\ldots,M-1\} \\
\pi[i-1]\left(p-\frac\Delta2\right) + \pi[i]\left(1-\left(p+\frac\Delta2\right)\right) \quad & i = M.
\end{cases}
\end{align*}
Let $\rho = \frac{2p+\Delta}{2p-\Delta}$. Solving for $\pi[i]$, we obtain:
\begin{align*}
\pi[i] = \begin{cases}
\rho^i \pi[0] & i \leq M/2 \\
\rho^{M-i}\pi[0] & i > M/2.
\end{cases}
\end{align*}
Plugging in these expressions into the normalizing constraint $\sum_{i=0}^{M}\pi[M] = 1$, we obtain:
\begin{align*}
\pi[0]\left(\sum_{i=0}^{M/2}\rho^i + \sum_{i=M/2+1}^{M}\rho^{M-i}\right) = 1 &\iff \pi[0]\left( 2 \sum_{i=0}^{M/2-1}\rho^i + \rho^{M/2}\right) = 1 \\
&\iff \pi[0]\left(2 \cdot \frac{\rho^{M/2}-1}{\rho-1}+\rho^{M/2}\right) = 1.
\end{align*}
Solving for $\pi[0]$ and noting that $\pi[0] = \pi[M]$, we obtain the result.
\end{proof}

\section{Omitted Proofs from \Cref{ssec:timehomogen}}\label{apx:4.1-proof}

\subsection{Auxiliary Results}\label{apx:aux-res}

\MainCorr*

\begin{proof}
We first argue that for any $t \in \mathbb{N}$, we have:
\begin{align}\label{eq:as-bounds}
(Z_t(A_t))^+ \cdot \ind{S_{t-1} = M} \leq W_t \leq   Z_{\max} \cdot \ind{S_t = M} \quad \text{and} \quad
(Z_t(A_t))^-\mathds{1}\{S_{t-1} = 0\} \leq V_t \leq Z_{\min}\mathds{1}\{S_t = 0\}.
\end{align}
To see this, note that in every period $t$ such that $S_{t-1} + B_{t} - N_t A_t \geq M$, we have $S_t = M$, and incur overflow
\[W_t = S_{t-1} + B_{t} - N_t A_t - M \leq B_t-N_t A_t \leq Z_{\max},\] where the final inequality follows from the fact that $S_{t-1} \in [0,M]$.
Since $W_t = 0$ in any round $t$ such that $S_t < M$, we obtain $W_t \leq Z_{\max} \cdot \ind{S_t = M}.$ 

Consider now any round $t$ such that $S_{t-1} = M$. Then, in this round \mbox{$W_t = (B_{t} - N_t A_t)^+ = (Z_t(A_t))^+$}. Hence, 
\begin{align}\label{eq:wt-as-lb}
W_t \geq (Z_t(A_t))^+ \cdot \ind{S_{t-1} = M}.
\end{align}
Similar arguments give us the bounds on $V_t$.

\smallskip 

We now use \eqref{eq:as-bounds} to bound $\overline{W}$ and $\overline{V}$. The fact that $\overline{W} \leq Z_{\max} H_M$ follows immediately from the almost sure upper bound on $W_t$. Similarly for the fact that $\overline{V} \leq Z_{\min} H_0$.

For the lower bounds, define the random variable $X_t = \left(Z_t(\alpha_M)^+ - \Exp{Z_t(\alpha_M)^+}\right) \Ind{S_{t-1} = M}.$ 
The sequence $(X_t, t \in \mathbb{N})$ satisfies the following properties:
\begin{enumerate}
\item $\mathbb{E}[X_t \mid X_1,\ldots, X_{t-1}] = 0$: This follows from the fact that $Z_t(\alpha_M)$ is independent of $S_{t'}$, for all $t' < t$.
\item $\lim_{T\to\infty}\sum_{t=1}^Tt^{-2}\mathbb{E}[X_t^2] < +\infty$: This follows from the fact that $\mathbb{E}[X_t^2]\leq {(Z_{\max}+Z_{\min})^2}$. 
\end{enumerate}
Applying the Strong Law of Large Numbers for Martingales (Theorem 1 in \citet{csorgHo1968strong}) to $X_t$, we have:
\begin{align*}
\lim_{T\to\infty}\frac1T\sum_{t=1}^TX_t = 0, \quad \text{almost surely}.
\end{align*}

Applying this to $\overline{W}$ and \Cref{eq:as-bounds}, we have:
\begin{align*}
    \overline{W} = \limsup_{T \rightarrow \infty} \frac{1}{T} \sum_{t=1}^T W_t & \geq \limsup_{T \rightarrow \infty} \frac{1}{T} \sum_{t=1}^T Z_t(\alpha_M)^+ \Ind{S_{t-1} = M} \\
    & \geq \limsup_{T \rightarrow \infty} \frac{1}{T} \sum_{t=1}^T X_t + \liminf_{T \rightarrow \infty} \frac{1}{T} \sum_{t=1}^T \Exp{Z_t(\alpha_M)^+} \Ind{S_{t-1} = M} \\
    & {= \Exp{Z_1(\alpha_M)^+} \liminf_{T \rightarrow \infty} \frac{1}{T} \sum_{t=1}^T \Ind{S_{t} = M} = \Exp{Z_1(\alpha_M)^+} H_M,}
\end{align*}
where the second equality uses the fact that $(B_t, N_t)$ are i.i.d. across $t$.
\end{proof}

\RandomWalkEquivalence*
\begin{proof}
Recall, $\tau(S)$ is used to denote the first time $S_t \in \{0,M\}$.  We first claim that $Q_t = S_t$ for all $t < \tau(S)$. We prove this by induction. Indeed, $Q_0 = S_0$ by definition. Now, fix $t' < \tau(S)-1$, and suppose $Q_{t'} = S_{t'}$. Then:
\[
Q_{t'+1} = Q_{t'} + Z_{t'+1} =   S_{t'} + Z_{t'+1} = \left.(S_{t'} + Z_{t'+1})\right|_0^M = S_{t'+1},
\]
where the second equality follows from the inductive hypothesis, and the third equality follows from the fact that $t'+1 < \tau(S)$, which implies that $S_{t'+1} \in (0,M)$.

We now argue that $\tau(S) \geq \tau$. For all $t < \tau$, $Q_t = S_t$. Therefore, for all $t < \tau$, $S_t \in (0,M)$, which implies that $\tau(S) \geq \tau$. 

Putting these two inequalities together, we obtain the fact that $\tau = \tau(S)$, and the result follows.
\end{proof}

The following lemma provides conditions on the budget and demand distributions $B_t$ and $N_t$ to ensure that the stopping times defined in \cref{ssec:timehomogen} are well-defined and finite.

\begin{restatable}{lemma}{PolicyBounds}
    \label{lem:policy_constants}
    Fix an allocation policy $\mathcal{A}$.  Suppose there exist strictly positive constants $\epsilon$ and $\delta$ such that, for any allocation $\alpha$ made by the policy at time $t$:
    \[
    \Pr(Z_t(\alpha) \geq \epsilon) \geq \delta \quad \text{ and } \quad \Pr(Z_t(\alpha) \leq - \epsilon) \geq \delta.
    \]
    Then, the following properties hold:
    \begin{enumerate}[label=(\roman*)]
    \item  $E(S) < \infty$ for any $S \in [0,M]$,
    \item $\tau_M(S) < \infty$ and $\tau_0(S) < \infty$, almost surely, for any $S \in [0,M]$,
    \item $p_0 \in (0,1)$ and $p_M (0,1)$, and
    \item $\Exp{Z_t(\alpha)^+} \geq \epsilon \delta$ and $\Exp{Z_t(\alpha)^-} \geq \epsilon \delta$.
    \end{enumerate}
\end{restatable}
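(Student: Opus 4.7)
The plan is to handle parts (i)--(iv) via a single workhorse observation: the hypothesis furnishes uniform-in-history lower bounds on the conditional probability of a large positive or negative drift in any single step, which then feeds a geometric trials argument. Part (iv) is immediate and I would dispose of it first: since $Z_t(\alpha)^+ \ge \epsilon \cdot \mathds{1}\{Z_t(\alpha) \ge \epsilon\}$ pointwise, taking expectations yields $\E[Z_t(\alpha)^+] \ge \epsilon \cdot \Pr(Z_t(\alpha) \ge \epsilon) \ge \epsilon\delta$, and symmetrically for $Z_t(\alpha)^-$.

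For parts (i) and (ii), I would set $k = \lceil M/\epsilon \rceil$ and argue the following key claim: for any time $t$ and any history $\mathcal{H}_t$, $\Pr(\tau_M(S_t) \le k \mid \mathcal{H}_t) \ge \delta^k$, with the symmetric statement for $\tau_0$. To see this, observe that on the event $\{Z_{t+1} \ge \epsilon, \ldots, Z_{t+k} \ge \epsilon\}$, the reflected dynamics $S_t = (S_{t-1}+Z_t)\big|_0^M$ force $S_{t+j} \ge S_t + j\epsilon$ for every $j \le k$ at which $M$ has not yet been hit; since $k\epsilon \ge M$, one of these values must reach $M$, so $\tau_M(S_t) \le k$. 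By iterated application of the tower property (using that $\Pr(Z_{s} \ge \epsilon \mid \mathcal{H}_{s-1}) \ge \delta$ for every $s$, regardless of the allocation chosen by $\mathcal{A}$ at step $s$), the joint probability of $k$ consecutive ``large-positive-drift'' steps is at least $\delta^k$. Iterating over disjoint blocks of $k$ consecutive steps then stochastically dominates $\tau_M(S)$ by $k$ times a Geometric$(\delta^k)$ random variable, which is finite almost surely and has finite mean. The same argument applied to $\{Z_s \le -\epsilon\}$ yields the analogous bound on $\tau_0(S)$. Since $\tau(S) \le \min(\tau_M(S),\tau_0(S))$, both $E(S) < \infty$ and almost-sure finiteness of $\tau_M(S), \tau_0(S)$ follow.

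For part (iii), I would bound $p_M$ from both sides. For the lower bound, a one-step argument suffices: starting from $S_0 = M$, on the event $\{Z_1 \ge 0\}$ we have $S_1 = M$ (by reflection), so $\tau(M) = 1$ and $S_{\tau(M)} = M$; hence $p_M \ge \Pr(Z_1 \ge 0) \ge \Pr(Z_1 \ge \epsilon) \ge \delta > 0$. For the upper bound $p_M < 1$, I would reuse the ``straight-down'' variant of the block argument: on the event $\{Z_1 \le -\epsilon, \ldots, Z_k \le -\epsilon\}$, starting from $S_0 = M$, an easy induction gives $S_j \le M - j\epsilon < M$ for every $1 \le j \le k$ until $S_j = 0$ for the first time, which must occur by step $k$. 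Thus on this event $\tau(M) \le k$ and $S_{\tau(M)} = 0$, so $1 - p_M \ge \delta^k > 0$. Symmetric arguments (swapping the roles of $0$ and $M$, and of the positive/negative drift events) yield both $p_0 > 0$ and $p_0 < 1$.

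The only subtlety I anticipate is the tower-property step, since $Z_s$ is not an i.i.d.\ sequence: its conditional distribution at step $s$ depends on $\mathcal{H}_{s-1}$ through the state-dependent allocation $\alpha_s$. The hypothesis, however, is phrased precisely to absorb this dependence, giving $\Pr(Z_s \ge \epsilon \mid \mathcal{H}_{s-1}) \ge \delta$ (and the analogous negative-side bound) for every $s$ and every realization of $\mathcal{H}_{s-1}$; this is exactly what makes the product lower bound $\delta^k$ valid without any independence assumption. Beyond this point, the remaining work is routine bookkeeping.
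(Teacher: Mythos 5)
Your proposal is correct and follows essentially the same route as the paper: part (iv) by the pointwise bound $Z^+\ge\epsilon\,\mathds{1}\{Z\ge\epsilon\}$, parts (i)--(ii) via blocks of $\lceil M/\epsilon\rceil$ consecutive same-sign drift steps each occurring with probability at least $\delta^{\lceil M/\epsilon\rceil}$ (yielding geometric domination of the hitting times), and part (iii) via a one-step return for the lower bound on $p_M$ and the same ``straight-down'' block for the upper bound. Your explicit tower-property treatment of the history-dependence of the allocation is a slightly more careful rendering of the step the paper justifies by the i.i.d.-ness of $(B_t,N_t)$, but it is not a different argument.
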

\begin{proof}
\noindent\textbf{(i) Finite expected hitting time.} Recall, $\tau_M(S)$ and $\tau_0(S)$ respectively denote the first time the process reaches $M$ or $0$, starting from state $S$. To show that $E(S) < \infty$ it suffices to show that $E_0(S) \coloneqq \Exp{\tau_0(S)} < \infty$ for any $S \in [0,M]$, since $\tau(S) \leq \min\{\tau_M(S), \tau_0(S)\}$. In the remainder of the proof, we omit dependence of all quantities on the initial state $S$, for notational simplicity.

By assumption, there exist $\epsilon$ and $\delta$ such that $\Pr(Z_t(\alpha) \leq -\epsilon) \geq \delta$.  Let $L=\lceil M/\epsilon\rceil$.
By the tail-sum formula,
\begin{align*}
\E[\tau_0(S)] = \sum_{k = 0}^\infty \sum_{l=0}^{L-1} \Pr\big(\tau_0(S)> kL + l\big) \leq \sum_{k = 0}^\infty \sum_{l=0}^{L-1} \Pr\big(\tau_0(S)> kL\big) &= L\sum_{k=0}^{\infty}\mathbb{P}\left(\tau_0(S) > kL\right).
\end{align*}
By the law of total probability, and the fact that $\tau_0(S) \leq (k-1)L \implies \tau_0(S) \leq kL$, we have:
\begin{align}\label{eq:expect-tau0}
\mathbb{P}\left(\tau_0(S) \leq kL\right) &= \mathbb{P}\left(\tau_0(S) \leq (k-1)L\right) + \mathbb{P}\left(\tau_0(S) \leq kL , \tau_0(S) > (k-1)L\right)
\end{align}

Consider the event $
E^-_L \;:=\; \{Z_1\le -\epsilon,\; Z_2\le -\epsilon,\ldots, Z_L\le -\epsilon\}.$ Conditioning the second summand on $E_L^-$, we have:
\begin{align*}
\mathbb{P}\left(\tau_0(S) \leq kL\right) &\geq \mathbb{P}\left(\tau_0(S) \leq (k-1)L\right) + \mathbb{P}\left(\tau_0(S) \leq kL , \tau_0(S) > (k-1)L \mid E_L^-\right)\mathbb{P}(E_L^-)
\end{align*}

Note that, if $\tau_0(S) > (k-1)L$, by definition $S_t$ has not reached 0 by time $(k-1)L$. However, on the event $E_L^-$, the unreflected process $Q_t$ decreases by at least $L\epsilon \geq M$. Therefore, it must be the case that $\tau_0(S) \leq kL$, independent of the starting state at $(k-1)L$. Moreover, since $(B_t,N_t)$ are independent across time,
\(\Pr(E^-_L)\ge \delta^L\). Putting these two facts together, we obtain:
\begin{align*}
\mathbb{P}\left(\tau_0(S) \leq kL\right) &\geq \mathbb{P}\left(\tau_0(S) \leq (k-1)L\right) + \mathbb{P}\left( \tau_0(S) > (k-1)L \mid E_L^-\right)\delta^L\\
&= 1-(1-\delta^L)\mathbb{P}\left(\tau_0(S)>(k-1)L\right)
\\
\implies \mathbb{P}\left(\tau_0(S) > kL\right) &\leq (1-\delta^L)\mathbb{P}\left(\tau_0(S) > (k-1)L\right) \leq (1-\delta^L)^k,
\end{align*}
where the final inequality follows from the fact that $\mathbb{P}(\tau_0(S) > 0) = 1$.

Plugging this back into \eqref{eq:expect-tau0}, we obtain:
\begin{align*}
\mathbb{E}[\tau_0(S)] \leq L\sum_{k=0}^{\infty}(1-\delta^L)^k = \frac{L}{\delta^L} < \infty.
\end{align*}

An analogous argument with $E^+_L := \{Z_1\ge \epsilon,\ldots,Z_L\ge \epsilon\}$ gives a finite bound for the expected time to hit $M$, i.e. $\Exp{\tau_M(S)} < \infty$.

\medskip
\noindent\textbf{(ii) Finite hitting times almost surely.}
The previous argument established that $\Exp{\tau_M(S)} < \infty$ and $\Exp{\tau_0(S)} < \infty$. This immediately implies that $\tau_M(S)$ and $\tau_0(S)$ are finite almost surely.

\medskip
\noindent\textbf{(iii) Boundary hit probabilities.} We show $p_M\in(0,1)$; the proof for $p_0$ is symmetric.
By definition,
\[
p_M \;=\; \Pr(S_{\tau(M)}=M \,\big|\, S_0 = M).
\]
A one-step return to $M$ occurs whenever $Z_1\ge \epsilon$, since $M+Z_1 > M \implies S_1 := (M+Z_1)\vert_0^M=M$. Hence, $\tau(M) = 1$, with $S_{\tau(M)} = M$.
Therefore, $p_M \ge \Pr(Z_1\ge \epsilon\mid S_0=M)\ge \delta>0$.

To see that $p_M<1$, note that under event $E^-_L$, the process hits 0 in the first $L$ periods (since the cumulative drift is at most $-L\epsilon \leq -M$), before being able to return to $M$. Moreover, this occurs with probability at least
$\Pr(E^-_L\mid S_0=M)\ge \delta^L$.
Therefore $\Pr(S_{\tau(M)}=0\mid S_0=M)\ge \delta^L>0$, which implies $p_M\le 1-\delta^L<1$.
A symmetric argument starting from $S_0=0$ gives $p_0\in(0,1)$.

\medskip
\noindent\textbf{(iv) Positivity of $\Exp{Z_t(\alpha)^+}$ and $\Exp{Z_t(\alpha)^-}$.}
By assumption:
\[
\mathbb{E}\big[ Z(\alpha)^+ \big] 
\ \ge\ \epsilon \cdot \Pr(Z(\alpha) \geq \epsilon) 
\ \ge\ \epsilon \delta.
\]
An analogous argument gives $\mathbb{E}\big[ Z(\alpha)^- \big] \ge \epsilon\delta$.  
\end{proof}

\subsection{Proof of \Cref{Thm:Main_expression_for_inefficiency}}\label{apx:h-thm}

\InefficiencyExpression*
\begin{proof}
We prove the result for $H_M$; the argument for $H_0$ is analogous. Note that $\tau_M(S_0) < \infty$ almost surely; hence we can assume without loss of generality that $S_0 = M$, since this initial time will have no impact on $\overline{W}$.

We decompose the horizon into cycles in which the inventory level begins at $M$, hit zero, and return to $M$.
In more detail, we define a sequence of stopping times $0 = \tau_0,\tau_1,\tau_2,\ldots$ such that the $n$-th renewal cycle of the process begins in round $\tau_{n-1}$ and ends in round $\tau_n-1$, with
\[\tau_n = \inf\{t > \tau_{n-1}: S_t = M \text{ and there exists } \tau_{n-1} < t' < t \text{ such that } S_{t'} = 0\}.\]
Note that $S_t$ may hit 0 or $M$ multiple times within a cycle. One such cycle is depicted in \Cref{fig:renewal}.

Let $L_n = \tau_{n} - \tau_{n-1}$ be the length of the $n$-th renewal cycle and set $R_n = \sum_{t=\tau_{n-1}}^{\tau_n-1} \ind{S_t = M}$ be the number of times in which $S_t = M$ in the cycle. Since $\policy$ is time-homogeneous, 
$(L_n, R_n)$ form a renewal-reward pair when $\mathbb{E}[L_n] < \infty$.\footnote{We prove $\mathbb{E}[L_n] < \infty$ by providing a closed-form expression for $\Exp{L_n}$ below.}
By the Renewal-Reward Theorem, we then have
\begin{equation}\label{eq:renewal-reward_expression_appx}
    \lim_{T \rightarrow \infty}\frac{\sum_{t=1}^T\mathds{1}\{S_t = M\}}{T} \stackrel{a.s.}{=} \frac{\E[R_1]}{\E[L_1]}.
\end{equation} 
By \Cref{eq:renewal-reward_expression_appx}, it suffices to compute $\E[R_1]$ and $\E[L_1]$. 

Consider first the random variable $R_1$. Note that $R_1$ corresponds to the number of times $S_t$ hits $M$ before hitting 0 (including the initial state $S_0 = M$); this follows from the fact that, if it hits $M$ after 0, then this would correspond to the beginning of a new cycle, and therefore not be counted in $R_1$. To compute $\mathbb{E}[R_1]$, consider the counting process for which periods $t > 0$ when $S_t \in \{0,M\}$ are trials; each trial for which $S_t = M$ is a failure, and the first trial for which $S_t = 0$ is a success. Moreover, since $p_{M}$ is the probability that, starting from $S_{0}=M$, the next visit to $\{0,M\}$ is again at $M$, the success probability for this process is exactly $1-p_M$. Then, $R_1-1$ corresponds to the number of failures before success, and is geometrically distributed with mean $\E[R_1-1] = \frac{1-(1-p_M)}{1-p_M} = \frac{p_M}{1-p_M} \implies \E[R_1] = \frac{1}{1-p_M}$.

We now provide a closed-form expression for $\E[L_1]$. Let $\kappa = \inf\{t < \tau_1: S_t = 0\}$ be the first time $S_t = 0$ in the renewal cycle. 
By definition, $\E[L_1] = \E[\tau_1] = \E[\kappa] + \E[\tau_1 - \kappa ]$. We first deduce an expression for $\E[\kappa]$.

To do so, we consider all periods before $\kappa$ for which $S_t = M$. Formally, let $t_1 = 0$; for $n \geq 2$, define $t_n = \inf\{t > t_{n-1}: S_t \in \{0,M\}\}$. Notice that, by definition, $t_{R_1}$ is the last time the inventory level hits $M$ before first hitting 0, and $\kappa = t_{R_1+1}$. Decomposing $\kappa$ into the lengths of these successive boundary hits, we have:
$$\E[\kappa] = \E\left[\sum_{n=2}^{R_1+1}t_n - t_{n-1}\right].$$
Note that, for all $n \in \{2,\ldots,R_1+1\}$, $t_n - t_{n-1}$ are i.i.d. random variables distributed as $\tau(M)$. Moreover, using the interpretation described above, $R_1+1$ corresponds to the number of trials required for first success and has finite mean (assuming $p_M < 1$), {we apply Wald's equation~\citep[Proposition 1.8.1]{resnick2013adventures}:}
\[\E[\kappa] = \E[R_1]\cdot E(M) = \frac{1}{1-p_M} E(M).\]

The expression for $\E[\tau_1-\kappa]$ proceeds similarly. In particular, let $R_0$ denote the number of times $S_t$ hits 0 before hitting $M$ after $\kappa$. Consider again the sequence $(t_n, n\in\mathbb{N})$. By definition, $\tau_1 = t_{(R_1+1)+(R_0+1)} = t_{R_1+R_0+2}$. We then have: 
\[\E[\tau_1-\kappa] = \sum_{n=R_1+2}^{R_0+R_1+2}t_n-t_{n-1}.\]
In this case, for all $n \in \{R_1+2, \ldots ,R_0+R_1+2\}$, $t_n - t_{n-1}$ are i.i.d. random variables distributed as $\tau(0)$. Moreover, by the same arguments as above, $R_0$ is geometrically distributed with mean $\E[R_0] = \frac{p_0}{1-p_0}$. Applying Wald's equation to the above, we obtain:
\[\E[\tau_1-\kappa] = \E[R_0+1]\cdot E(0) = \frac{1}{1-p_0} E(0).\]
Therefore:
\begin{align*}
&\E[L_1] = \frac{1}{1-p_M} E(M) +\frac{1}{1-p_0} E(0)\\ \implies &\frac{\E[R_1]}{\E[L_1]} = \frac{\frac{1}{1-p_M}}{\frac{1}{1-p_M} E(M) +\frac{1}{1-p_0} E(0)} = \frac{1}{E(M) + \frac{1-p_M}{1-p_0}E(0)}.
\end{align*}

The derivation of $H_0$ is entirely analogous and omitted as such.
\end{proof}
\section{Omitted Proofs from \Cref{ssec:fixed_policy}}\label{apx:static}

\subsection{Proof of \Cref{thm:Inefficient_for_fix_0_mean}}\label{apx:thm-2-proof}

The primary technical tool we will apply to show our result will be the optional stopping theorem.

\InefficiencyZeroMean*

\begin{proof}
For ease of notation, we let $Z_t = Z_t(\alpha^*)$, omitting the dependence on $\alpha^*$ throughout. 
By \Cref{Cor:Main_expression_for_inefficiency}, it suffices to bound $H_M$, $H_0$, $\mathbb{E}[Z_1^+]$, and $\mathbb{E}[Z_1^-]$, if $H_M$ and $H_0$ exist. \Cref{Thm:Main_expression_for_inefficiency} guarantees existence of $H_M$ and $H_0$ as long as $E(M) < \infty$, $E(0) < \infty$, $p_M, p_0 \in (0,1)$, and $\tau_M(S_0)$ and $\tau_0(S_0)$ are finite almost surely. By \Cref{lem:policy_constants}, these properties hold as long as there exist $\epsilon > 0, \delta > 0$ such that $\Pr(Z_t \geq \epsilon) \geq \delta$ and $\Pr(Z_t \leq - \epsilon) \geq \delta$. To see why this would be the case for the static policy defined by $\alpha^*$, note that $\Exp{Z_t} = 0$, and $\Var{Z_t} = \Bstd^2 + (\alpha^*)^2 \Nstd^2 > 0$, since $(B_t, N_t)$ are independent and $\Bstd > 0$ by assumption. Therefore, $\Pr(Z_t > 0) > 0$ and $\Pr(Z_t < 0) > 0$. This also immediately establishes that $\E[Z_1^+]$ and $\E[Z_1^-]$ are lower bounded by positive constants. Hence, it suffices to bound $H_M$ and $H_0$ to obtain bounds on $\overline{W}$ and $\overline{V}$, respectively.

In the remainder of the proof we provide upper and lower bounds for $H_M$. Arguments for $H_0$ are entirely analogous; we omit them as such. Recall, by \Cref{Thm:Main_expression_for_inefficiency},
\[H_M = \frac{1}{E(M) + E(0)\cdot \frac{1-p_{M}}{1-p_{0}}}.\]
We bound each of these four quantities separately.

\medskip 
{\bf Bounding $p_M$ and $p_0$.} We first establish that $p_M$ and $p_0$ scale as $1-\Theta(M^{-1})$. 

By \Cref{lem:Random_walk_equivalence_1}, $p_M = \mathbb{P}(Q_\tau \geq M \mid S_0 = M)$. Then, by the law of total probability:
\[
p_M = \Pr(Q_\tau \geq M \mid Z_1 < 0, S_0 = M) \Pr(Z_1 < 0 \mid S_0 = M) + \Pr(Q_\tau \geq M \mid Z_1 \geq 0, S_0 = M) \Pr(Z_1 \geq 0 \mid S_0 = M).
\]
Note that, if $Z_1 \geq 0$, $Q_1 \geq M$; therefore $\tau = 1$ and $\Pr(Q_\tau \geq M \mid Z_1 \geq 0, S_0 = M) = 1$. Hence:
\begin{align}\label{eq:pM}
p_M &= \Pr(Q_\tau \geq M \mid Z_1 < 0, S_0 = M) \Pr(Z_1 < 0 \mid S_0 = M) + 1 - \Pr(Z_1 < 0 \mid S_0 = M) \notag \\
&= 1-\Pr(Z_1 < 0)\left(1-\Pr(Q_\tau \geq M \mid Z_1 < 0, S_0 = M)\right).
\end{align}

Given $Z_1 < 0$, it is easy to see that $Q_1,\ldots,Q_t$ form a martingale with bounded increments. \Cref{lem:opt-stopping-static-1} applies the optional stopping theorem to obtain a closed-form expression for $\Pr(Q_\tau \geq M \mid Z_1 < 0, S_0 = M)$. We defer its proof to Appendix \ref{appsec:martingales}.

\begin{restatable}{lemma}{OptStoppingStatic}\label{lem:opt-stopping-static-1}
    \[\Pr\left(Q_\tau \geq M \mid Z_1 < 0, S_0 = M\right) = \frac{M+\E[Z_1 \mid Z_1 < 0]-\E[Q_\tau \mid S_0 = M, Z_1 < 0, Q_\tau < M]}{\E[Q_\tau \mid S_0 = M, Z_1 < 0, Q_\tau \geq M]-\E[Q_\tau \mid S_0 = M, Z_1 < 0, Q_\tau < M]}.\]
\end{restatable}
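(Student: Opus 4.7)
The plan is to apply the optional stopping theorem to the process $(Q_t)_{t \geq 0}$, which under the proportional allocation policy $\alpha^* = \mu_{\BDist}/\mu_{\NDist}$ has zero-mean increments $Z_t$ and is therefore a martingale with respect to its natural filtration. Once OST yields a closed form for $\E[Q_\tau \mid Z_1 < 0, S_0 = M]$, I would decompose this expectation over the two possible exit events $\{Q_\tau \geq M\}$ and $\{Q_\tau < M\}$ and solve the resulting linear equation for the probability in question.

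To set up OST rigorously, I first verify its hypotheses. The increments satisfy $|Z_t| \leq Z_{\max} + Z_{\min}$, a constant independent of $M$, so $(Q_t)$ has uniformly bounded increments. Moreover, by \Cref{lem:policy_constants} (whose hypotheses are satisfied under $\alpha^*$ since $\Bstd > 0$ implies the existence of $\epsilon, \delta$ with $\Pr(Z_1 \geq \epsilon) \geq \delta$ and $\Pr(Z_1 \leq -\epsilon) \geq \delta$), the exit time $\tau$ has finite expectation starting from any state in $[0,M]$. The bounded-increments version of OST then gives
\[
\E[Q_\tau \mid Z_1 < 0, S_0 = M] \;=\; \E[Q_1 \mid Z_1 < 0, S_0 = M] \;=\; M + \E[Z_1 \mid Z_1 < 0].
\]

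Next, on the conditioning event $\{Z_1 < 0, S_0 = M\}$ (and for $M > Z_{\min}$, which we may assume since we care about scaling), we have $Q_1 = M + Z_1 \in (0,M)$ almost surely, so $\tau \geq 2$ and $Q_\tau \notin (0,M)$; thus $\{Q_\tau \geq M\}$ and $\{Q_\tau < M\}$ partition the probability space under this conditioning. Writing $p := \Pr(Q_\tau \geq M \mid Z_1 < 0, S_0 = M)$ and applying the tower property to the expression above,
\[
M + \E[Z_1 \mid Z_1 < 0] \;=\; p\,\E[Q_\tau \mid S_0 = M, Z_1 < 0, Q_\tau \geq M] + (1-p)\,\E[Q_\tau \mid S_0 = M, Z_1 < 0, Q_\tau < M].
\]
Solving for $p$ yields exactly the stated identity.

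The main technical obstacle is the justification of OST itself: $(Q_t)$ is an unbounded random walk, so finiteness of $\E[\tau]$ is not automatic and relies on the two-sided drift conditions that \Cref{lem:policy_constants} extracts from $\Bstd > 0$. A secondary subtlety is ensuring that both conditional expectations on the right-hand side are well-defined, i.e., that both exit events have positive probability under the conditioning; this also follows from \Cref{lem:policy_constants} which gives $p_M, p_0 \in (0,1)$, and hence (together with $\Pr(Z_1 < 0) > 0$) positivity of $\Pr(Q_\tau \geq M, Z_1 < 0 \mid S_0 = M)$ and $\Pr(Q_\tau \leq 0, Z_1 < 0 \mid S_0 = M)$.
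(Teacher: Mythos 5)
Your proposal is correct and follows essentially the same route as the paper: apply the optional stopping theorem to the martingale $(Q_t)$ conditional on $\{Z_1<0\}$ to get $\E[Q_\tau \mid Z_1<0, S_0=M]=M+\E[Z_1\mid Z_1<0]$, then decompose over the exit events $\{Q_\tau\ge M\}$ and $\{Q_\tau<M\}$ and solve for the probability. The only difference is that you spell out the OST hypotheses (bounded increments, $\E[\tau]<\infty$ via \Cref{lem:policy_constants}) and the positivity of both exit events, which the paper leaves implicit.
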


Applying \Cref{lem:opt-stopping-static-1} to \eqref{eq:pM}, we have:
\begin{align*}
1-\Pr\left(Q_\tau \geq M \mid Z_1 < 0, S_0 = M\right) = \frac{\E[Q_\tau \mid S_0 = M, Z_1 < 0, Q_\tau \geq M]-M-\E[Z_1 \mid Z_1 < 0]}{\E[Q_\tau \mid S_0 = M, Z_1 < 0, Q_\tau \geq M]-\E[Q_\tau \mid S_0 = M, Z_1 < 0, Q_\tau < M]},
\end{align*}
which implies:
\begin{align*}
p_M &= 1-\Pr(Z_1<0)\cdot \frac{\E[Q_\tau \mid S_0 = M, Z_1 < 0, Q_\tau \geq M]-M-\E[Z_1 \mid Z_1 < 0]}{\E[Q_\tau \mid S_0 = M, Z_1 < 0, Q_\tau \geq M]-\E[Q_\tau \mid S_0 = M, Z_1 < 0, Q_\tau < M]}\\
&= 1- \frac{\left(\E[Q_\tau \mid S_0 = M, Z_1 < 0, Q_\tau \geq M]-M\right)\Pr(Z_1<0)+\E[Z_1^-]}{\E[Q_\tau \mid S_0 = M, Z_1 < 0, Q_\tau \geq M]-\E[Q_\tau \mid S_0 = M, Z_1 < 0, Q_\tau < M]},
\end{align*}
where the second equality follows from the fact that $\E[Z_1^-] = -\E[Z_1\ind{Z_1 < 0}]=-\Pr(Z_1 < 0)\E[Z_1 \mid Z_1 < 0]$.


For the upper bound on $p_M$, use the fact that
$\E[Q_\tau \mid S_0 = M, Z_1 < 0, Q_\tau \geq M] \geq M$
in the numerator, and $\E[Q_\tau \mid S_0 = M, Z_1 < 0, Q_\tau \geq M] \leq M+Z_{\max}$ in the denominator. Therefore:
\begin{align*}
p_M \leq 1-\frac{\E[Z_1^-]}{M+Z_{\max}} = 1-\Omega(M^{-1}).
\end{align*}

The lower bound on $p_M$ follows from the fact that $\E[Q_\tau \mid S_0 = M, Z_1 < 0, Q_\tau \geq M] \leq M+Z_{\max}$ and $\E[Z_1^-] \leq {Z_{\min}}$ in the numerator. In the denominator, we have $\E[Q_\tau \mid S_0 = M, Z_1 < 0, Q_\tau \geq M] \geq M$, and $\E[Q_\tau \mid S_0 = M, Z_1 < 0, Q_\tau < M] \leq 0$. Hence:
\begin{align*}
p_M \geq 1-\frac{{\max\{Z_{\min}, Z_{\max}\}}\left(1+\Pr(Z_1 < 0)\right)}{M} = 1-O(M^{-1}),
\end{align*}
where here we use the fact that $\Pr(Z_1 < 0) < 1$.

Putting these two bounds together, we obtain $p_M = 1-\Theta(M^{-1})$. The proof that $p_0 = 1-\Theta(M^{-1})$ is entirely symmetric; we omit it as such.

Using the above bounds on $p_M$ and $p_0$, we have:
\begin{align*}
H_M = \frac{1}{E(M) + E(0)\cdot \frac{\Theta(1/M)}{\Theta(1/M)}} = \Theta\left(\left(E(M)+E(0)\right)^{-1}\right).
\end{align*}
Hence, it suffices to show that $E(M) + E(0) = \Theta(M)$. We argue this for $E(M)$; the argument for $E(0)$ is analogous.

\medskip

\textbf{Upper bounding $E(M)$.} Consider the random variable $Q_t^2-\sigma^2t$, where $\sigma^2 = \E[Z_1^2]$. We have the following lemma, whose proof we defer to Appendix \ref{appsec:martingales}.

\begin{restatable}{lemma}{MeanZeroMartingale} \label{lem:Mean_0_case_marginales}
    $Q_t^2-\sigma^2 t$ is a martingale with bounded increments on the event $\{\tau > t\}$.
\end{restatable}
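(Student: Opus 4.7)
The plan is to verify the two defining properties directly from the recursion $Q_{t+1} = Q_t + Z_{t+1}$. For the martingale property, I would let $\mathcal{F}_t = \sigma(Z_1, \ldots, Z_t)$ denote the natural filtration and expand
\[Q_{t+1}^2 = Q_t^2 + 2 Q_t Z_{t+1} + Z_{t+1}^2.\]
Since $Z_{t+1}$ is independent of $\mathcal{F}_t$ with $\mathbb{E}[Z_{t+1}] = \mu_{\BDist} - \alpha^* \mu_{\NDist} = 0$ (by the choice $\alpha^* = \mu_{\BDist}/\mu_{\NDist}$) and $\mathbb{E}[Z_{t+1}^2] = \sigma^2$, taking conditional expectations gives
\[\mathbb{E}[Q_{t+1}^2 \mid \mathcal{F}_t] = Q_t^2 + 2 Q_t \mathbb{E}[Z_{t+1}] + \sigma^2 = Q_t^2 + \sigma^2,\]
so $\mathbb{E}[Q_{t+1}^2 - \sigma^2 (t+1) \mid \mathcal{F}_t] = Q_t^2 - \sigma^2 t$, establishing the martingale property. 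Both $\mathbb{E}[|Z_{t+1}|]$ and $\sigma^2$ are finite because $B_t, N_t$ are bounded by assumption, which ensures integrability of $Q_t^2 - \sigma^2 t$ at each finite $t$.

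For bounded increments, I would note that on the event $\{\tau > t\}$ we have, by \Cref{lem:Random_walk_equivalence_1}, $Q_s = S_s \in (0,M)$ for all $s \leq t$, and in particular $|Q_t| \leq M$. The one-step increment can then be written as
\[\left(Q_{t+1}^2 - \sigma^2 (t+1)\right) - \left(Q_t^2 - \sigma^2 t\right) = 2 Q_t Z_{t+1} + Z_{t+1}^2 - \sigma^2,\]
and combining $|Q_t| \leq M$ with $|Z_{t+1}| \leq \max\{Z_{\min}, Z_{\max}\}$ yields the uniform bound
\[\left|2 Q_t Z_{t+1} + Z_{t+1}^2 - \sigma^2\right| \leq 2 M \max\{Z_{\min}, Z_{\max}\} + \max\{Z_{\min}, Z_{\max}\}^2 + \sigma^2,\]
which depends only on $M$ and the primitives of $\BDist$ and $\NDist$. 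This gives the desired bounded-increment property on $\{\tau > t\}$, and in particular implies that the stopped process $Q_{t \wedge \tau}^2 - \sigma^2 (t \wedge \tau)$ has uniformly bounded increments, as needed for the optional stopping argument invoked in the proof of \cref{thm:Inefficient_for_fix_0_mean}.

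This is a routine verification, and I do not anticipate a significant obstacle; the key point is simply that the mean-zero, bounded-variance nature of $Z_t$ (which itself is a direct consequence of the proportional allocation $\alpha^* = \mu_{\BDist}/\mu_{\NDist}$) turns $Q_t$ into a square-integrable martingale whose predictable quadratic variation grows linearly at rate $\sigma^2$.
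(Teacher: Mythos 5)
Your proof is correct and follows essentially the same route as the paper's: expand $Q_{t+1}^2 = Q_t^2 + 2Q_tZ_{t+1} + Z_{t+1}^2$, use independence of $Z_{t+1}$ from the past together with $\mathbb{E}[Z_{t+1}] = 0$ and $\mathbb{E}[Z_{t+1}^2] = \sigma^2$ for the martingale property, and bound the one-step increment by $2M\max\{Z_{\min},Z_{\max}\} + \max\{Z_{\min},Z_{\max}\}^2 + \sigma^2$ using $Q_t \in (0,M)$ on $\{\tau > t\}$. No gaps; your explicit remark that boundedness of $B_t, N_t$ gives integrability is a small addition the paper leaves implicit.
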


Hence, by the optional stopping theorem:
\begin{align*}
\E[Q_\tau^2 \mid S_0 = M] - \sigma^2\E[\tau] = \E[Q_0^2 \mid S_0 = M] = M^2.
\end{align*}
Using the fact that $\E[\tau] = E(M)$ by \cref{lem:Random_walk_equivalence_1} and solving for $E(M)$, we obtain:
\begin{align}\label{eq:eM-static}
\sigma^2 E(M) = \E[Q_\tau^2 \mid S_0 = M]-M^2\implies E(M) = \frac{\E[Q_\tau^2 \mid S_0 = M]-M^2}{\sigma^2}.
\end{align}
We have:
{
\begin{align*}
\E[Q_\tau^2 \mid S_0 = M] &\leq \max\{(M+Z_{\max})^2, Z_{\min}^2\} = (M+Z_{\max})^2,
\end{align*}
for large enough $M$. Here, the first inequality follows from the fact that, by definition of $\tau$, $Q_\tau = Q_{\tau-1} + Z_t \leq M+Z_{\max}$. Similarly, $Q_{\tau} \geq -Z_{\min}$.
} Plugging this into \eqref{eq:eM-static}, we obtain $E(M) = O(M)$.

\medskip 

\textbf{Lower bounding $E(M)$.} Recall, $\tau(S)$ corresponds to the first time $Q_t \not\in (0,M)$, given initial state $S$. Partitioning based on the event $Z_1 \geq 0$, we have:
\begin{align}\label{eq:em-static-2}
E(M) = \Pr(Z_1 \geq 0) + \E[(1+\tau(M+Z_1)) \ind{Z_1 < 0}] = 1+\E[\tau(M+Z_1) \ind{Z_1 < 0}].
\end{align}
The following lemma will allow us to bound $\E[\tau(M+Z_1) \ind{Z_1 < 0}]$. We defer its proof to the end of the section.
\begin{restatable}{lemma}{OneStepStopping}\label{eq:one-step-stopping-time}
For any $S \in [0,M]$, $\E[\tau(S)] \geq \frac{S(M-S)}{\sigma^2}$.
\end{restatable}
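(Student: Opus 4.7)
The plan is to apply the optional stopping theorem to the auxiliary process $M_t := Q_t(M - Q_t) + \sigma^2 t$, which the main theorem sketch already identifies as the right martingale to use.

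First I would verify the martingale property of $M_t$ on $\{\tau > t\}$. Using $Q_t = Q_{t-1} + Z_t$ with $\E[Z_t] = 0$ and $\E[Z_t^2] = \sigma^2$ (since we are working under the proportional static policy $\alpha^\ast$), a direct expansion gives
\[
\E[Q_t(M-Q_t) \mid \mathcal{F}_{t-1}] = M Q_{t-1} - \E[Q_t^2 \mid \mathcal{F}_{t-1}] = M Q_{t-1} - Q_{t-1}^2 - \sigma^2 = Q_{t-1}(M - Q_{t-1}) - \sigma^2,
\]
so adding $\sigma^2 t$ yields $\E[M_t \mid \mathcal{F}_{t-1}] = M_{t-1}$ on $\{\tau > t\}$. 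Increments are bounded on this event because $Q_{t-1} \in (0,M)$ and $|Z_t| \leq \max\{Z_{\min}, Z_{\max}\}$.

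Next I would invoke the optional stopping theorem at the stopping time $\tau = \tau(S)$, which is legal since $\E[\tau] < \infty$ by \Cref{lem:policy_constants} and the increments of $M_t$ are uniformly bounded on $\{\tau > t\}$. This yields
\[
\E[Q_\tau(M - Q_\tau)] + \sigma^2 \E[\tau] = S(M - S),
\]
so $\sigma^2 \E[\tau(S)] = S(M-S) - \E[Q_\tau(M - Q_\tau)]$.

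The final step is to note that at stopping, $Q_\tau \notin (0, M)$: either $Q_\tau \leq 0$, in which case $Q_\tau \leq 0$ and $M - Q_\tau \geq M > 0$, or $Q_\tau \geq M$, in which case $Q_\tau > 0$ and $M - Q_\tau \leq 0$. In both cases $Q_\tau(M - Q_\tau) \leq 0$, so $\E[Q_\tau(M - Q_\tau)] \leq 0$, which gives $\sigma^2 \E[\tau(S)] \geq S(M - S)$, as desired. The only mild subtlety is checking the integrability conditions for optional stopping; this is not really an obstacle given the tools already established (finiteness of $\E[\tau]$ in \Cref{lem:policy_constants} and bounded one-step increments of $M_t$ before $\tau$), so the argument is essentially a direct martingale computation rather than requiring a new idea.
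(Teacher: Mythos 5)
Your proposal is correct and follows essentially the same route as the paper: both define the martingale $Q_t(M-Q_t) + \sigma^2 t$, apply the optional stopping theorem at $\tau(S)$ (justified by bounded increments and finiteness of the expected hitting time), and conclude from the observation that $Q_\tau \notin (0,M)$ forces $Q_\tau(M-Q_\tau) \le 0$. No meaningful differences.
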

Hence, for fixed $Z_1 < 0$, $
\E[\tau(M+Z_1) \mid Z_1] \geq -\frac{(M+Z_1)Z_1}{\sigma^2}.$
Integrating over all $Z_1 < 0$, we obtain:
\begin{align*}
\E[\tau(M+Z_1) \ind{Z_1 < 0}] \geq \frac{1}{\sigma^2}\left(M\E[Z_1^-]-\E\left[\left(Z_1^-\right)^2\right]\right) = \Omega(M)
\end{align*}
Plugging this back into \eqref{eq:em-static-2}, we obtain $E_M = \Omega(M)$, thereby completing the proof.
\end{proof}

\subsubsection{Auxiliary Lemmas}\label{appsec:martingales}

\OptStoppingStatic*
\begin{proof}
By the optional stopping theorem:
\begin{align}\label{eq:opt1}
\E\left[Q_\tau \mid S_0 = M, Z_1 < 0\right] &= \E[Q_1 \mid Z_1 < 0] = \Exp{S_0 + Z_1 \mid Z_1 < 0} = M + \Exp{Z_1 \mid Z_1 < 0}.
\end{align}
We have:
\begin{align*}
&\E\left[Q_\tau \mid S_0 = M, Z_1 < 0\right]\\ &= \E[Q_\tau \mid S_0 = M, Z_1 < 0, Q_\tau \geq M]\Pr\left(Q_\tau \geq M \mid Z_1 < 0, S_0 = M\right)\\
&\quad +\E[Q_\tau \mid S_0 = M, Z_1 < 0, Q_\tau < M]\left(1-\Pr\left(Q_\tau \geq M \mid Z_1 < 0, S_0 = M\right)
\right) \\
&= \E[Q_\tau \mid S_0 = M, Z_1 < 0, Q_\tau < M]\\&\quad + \Pr\left(Q_\tau \geq M \mid Z_1 < 0, S_0 = M\right) \left(\E[Q_\tau \mid S_0 = M, Z_1 < 0, Q_\tau \geq M]-\E[Q_\tau \mid S_0 = M, Z_1 < 0, Q_\tau < M]\right)
\end{align*}
Plugging the above into \eqref{eq:opt1} and solving for $\Pr\left(Q_\tau \geq M \mid Z_1 < 0, S_0 = M\right)$, we obtain:
\begin{align*}
&\Pr\left(Q_\tau \geq M \mid Z_1 < 0, S_0 = M\right) = \frac{M+\E[Z_1 \mid Z_1 < 0]-\E[Q_\tau \mid S_0 = M, Z_1 < 0, Q_\tau < M]}{\E[Q_\tau \mid S_0 = M, Z_1 < 0, Q_\tau \geq M]-\E[Q_\tau \mid S_0 = M, Z_1 < 0, Q_\tau < M]}.
\end{align*}
\end{proof}

\MeanZeroMartingale*
\begin{proof}
   Let $Y_{t} = Q_{t}^2 - \sigma^2 t = \left(\sum_{s=1}^t Z_s + S_0\right)^2 - \sigma^2 t$, and denote by $\mathcal{F}_t$ the natural filtration up to time $t$.
   We have: 
    \begin{align*}
        \E[Y_{t+1} - Y_{t} \mid \mathcal{F}_{t}] & = \E\left[\left(\sum_{s=1}^{t+1} Z_s + S_0\right)^2 - \left(\sum_{s=1}^t Z_s + S_0\right)^2 - \sigma^2 \mid \mathcal{F}_{t}\right]\\
        & = \E\left[\left(\sum_{s=1}^{t+1} Z_s- \sum_{s=1}^t Z_s\right)\left( 2Q_{t} + Z_{t+1} \right) \mid \mathcal{F}_{t}\right] - \sigma^2\\
        & = \E[Z_{t+1}\left(2Q_{t} + Z_{t+1} \right) \mid \mathcal{F}_{t}] - \sigma^2\\
        & = \E[Z^2_{t+1}]- \sigma^2 = 0,
    \end{align*}
    where we have used the fact that $\E[Z_{t+1}Q_t \mid \mathcal{F}_t] = \E[Z_{t+1}]\E[Q_t \mid \mathcal{F}_t] = 0$. Thus, $Y_t$ is a martingale. 
    
    We now show that $Y_t$ has bounded increments with respect to $\tau = \inf\{t > 0 \mid Q_t \not\in (0,M)\}$. Namely,
    \begin{align*}
        |Y_{t+1}-Y_{t}| = |Z_{t+1}\left(2Q_t+Z_{t+1}\right)-\sigma^2| \leq 2MZ_{\max} + \left(\max\{Z_{\max},Z_{\min}\}\right)^2 + \sigma^2 < \infty.
    \end{align*}
\end{proof}

\OneStepStopping*
\begin{proof}
Let $\Phi(x):=x(M-x)$. We claim that $Y_t = \Phi(Q_t) + \sigma^2 t$ is a martingale. Letting $\mathcal{F}_t$ denote the natural filtration as before, we have:
\begin{align*}
\E[Y_{t+1} \mid \F_t]
&=\E\big[ (Q_{t}+Z_{t+1})(M-(Q_{t}+Z_{t+1})) \,\big|\, \F_t \big] + \sigma^2(t+1) \\
&=\Phi(Q_t) + \E\big[(M-2Q_t)Z_{t+1} - Z_{t+1}^2 \,\big|\, \F_t\big] + \sigma^2(t+1) \\
&=\Phi(Q_t) + \sigma^2t = Y_t,
\end{align*}
where we use the facts that $\Exp{Z_t} = 0$ and $\Exp{Z_t^2} = \sigma^2$. Moreover, $Y_t$ has bounded increments, since $|Z_t| \leq Z_{\max} + Z_{\min}$ almost surely, and
\begin{align*}
    |Y_{t+1} - Y_t| & = |\Phi(Q_{t+1}) - \Phi(Q_t) + \sigma^2| \\
    & \leq |(M-2Q_t)Z_{t+1} - Z_{t+1}^2| + \sigma^2 \\
    & \leq M(Z_{\max} + Z_{\min}) + (Z_{\max} + Z_{\min})^2 + \sigma^2.
\end{align*}
Hence, by the optional stopping theorem:
\begin{align*}
    \Phi(S) = \Exp{\Phi(Q_{\tau(S)})} + \sigma^2 \E[\tau(S)] \implies \E[\tau(S)] = \frac{\Phi(S)-\Exp{\Phi(Q_{\tau(S)})}}{\sigma^2} \geq \frac{\Phi(S)}{\sigma^2} = \frac{S(M-S)}{\sigma^2},
\end{align*}
where the inequality follows from the fact that $Q_{\tau(S)} \not\in (0,M)$ by definition, and therefore $\Phi(Q_{\tau(S)}) \leq 0$.
\end{proof}

\subsection{Proof of \Cref{thm:Inefficient_for_fix_non-0_mean}}\label{apx:thm-3-proof}

\InefficiencyDrift*

\begin{proof}
We prove the fact for $\alpha < \frac{\Bmean}{\Nmean}$; $\alpha > \frac{\Bmean}{\Nmean}$ follows similarly. As before, for ease of notation we let $Z_t = Z_t(\alpha)$.  Note that $\mu = \Exp{Z_s} > 0$. 

We partition our analysis into two cases.

\smallskip 

 {\bf Case 1: There exist $\epsilon > 0$, $\delta > 0$ such that $\Pr(Z_t \geq \epsilon) \geq \delta$ and $\Pr(Z_t \leq - \epsilon) \geq \delta$.} By \Cref{Thm:Main_expression_for_inefficiency}, $H_M$ and $H_0$ exist as long as $E(M) < \infty$, $E(0) < \infty$, $p_M, p_0 \in (0,1)$. These conditions are satisfied in this case, by \Cref{lem:policy_constants}. Therefore, by \Cref{Thm:Main_expression_for_inefficiency} it suffices to bound $E(M)$, $E(0)$, $p_M$ and $p_0$.

We first exhibit upper bounds on $E(0)$ and $E(M)$. Consider the random variable $Q_t - \mu t$. It is easy to verify that $Q_t - \mu t$ is a martingale with bounded increments on the event $\{\tau > t\}$, since $|Q_t - Q_{t-1}| = |Z_t| \leq Z_{\max} + Z_{\min}$. Applying the optional stopping theorem to $Q_t - \mu t$, we have:
\begin{align*}
&\E[Q_\tau \mid S_0 = M] - \mu \E[\tau \mid S_0 = M] = M \\
\implies &E(M) := \E[\tau \mid S_0 = M] = \frac{\E[Q_\tau \mid S_0 = M]-M}{\mu} \leq \frac{Z_{\max}}{\mu},
\end{align*}
where the inequality uses the fact that $Q_{\tau} = Q_{\tau - 1}+Z_t \leq M + Z_{\max}$, by definition of $\tau$ and the fact that $Z_t \leq Z_{\max}$.

Similarly,
\begin{align*}
&\E[Q_\tau \mid S_0 = 0] - \mu\E[\tau \mid S_0 = 0] = 0 
\implies E(0) := \E[\tau \mid S_0 = 0] = \frac{\E[Q_\tau \mid S_0 = 0]}{\mu}.
\end{align*}
We have:
\begin{align*}
\E[Q_\tau \mid S_0 = 0] &= p_0 \E[Q_\tau \mid S_0 = 0, Q_\tau \leq 0] + (1-p_0)\E[Q_\tau \mid S_0 = 0, Q_\tau \geq M] \leq (1-p_0)\left(M+Z_{\max}\right).
\end{align*}
Hence, by \Cref{Thm:Main_expression_for_inefficiency},
\begin{equation}
\label{eq:h_m_bound_non_zero_mean}
H_M = \frac{1}{E(M) + E(0)\cdot \frac{1-p_M}{1-p_{0}}} \geq \frac{\mu}{ Z_{\max} + (M + Z_{\max}) \cdot(1-p_M)}.
\end{equation}

We now argue that $p_M = 1-O(M^{-1})$, which will establish that $H_M = \Omega(1)$. As in the proof of \Cref{thm:Inefficient_for_fix_0_mean}, we have:
\begin{align}\label{eq:pM-static-bad}
p_M &= \pr(Q_\tau \geq M \mid S_0 = M) \notag \\ &= \pr(Q_\tau \geq M \mid Z_1 < 0, S_0 = M){\Pr(Z_1 < 0)} + \pr(Q_\tau \geq M \mid Z_1 \geq 0,  S_0 = M)\pr(Z_1 \geq 0) \notag \\
    &= \pr(Q_\tau \geq M \mid Z_1 < 0, S_0 = M)\pr(Z_1 < 0) + \pr(Z_1 \geq 0) \notag \\
    &= 1-\Pr(Z_1 < 0) \left(1-\pr(Q_\tau \geq M \mid Z_1 < 0, S_0 = M)\right).
\end{align}

The following lemma provides a lower bound for $\pr(Q_\tau \geq M \mid Z_1 < 0, S_0 = M)$. We defer its proof to Appendix \ref{apx:static-bad}.

\begin{restatable}{lemma}{BoundStaticBad}\label{lem:static-bad}
\[\Pr(Q_\tau \geq M \mid Z_1 < 0, S_0 = M) \geq \frac{M + \E[Z_1 \mid Z_1 < 0]}{M+Z_{\max}}.\]
\end{restatable}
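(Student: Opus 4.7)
The plan is to mirror the strategy used in \Cref{lem:opt-stopping-static-1}, but to exploit the fact that we are now in the positive-drift regime $\mu := \E[Z_1] > 0$. In that regime, $Q_t$ is no longer a martingale, but it is a \emph{submartingale}, and this is exactly the right tool because we only need a lower bound on $\Pr(Q_\tau \geq M \mid Z_1 < 0, S_0 = M)$.

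First, I would set up the optional stopping argument. Since $\E[Z_s] = \mu > 0$ and the $Z_s$ are i.i.d., the process $(Q_t)_{t \geq 0}$ is a submartingale with increments bounded by $Z_{\max} + Z_{\min}$ almost surely. In this case of the theorem, we are in the regime where \Cref{lem:policy_constants} applies, so $\tau$ has finite expectation under $\Pr(\,\cdot \mid S_0 = M, Z_1 < 0)$. Together, these conditions let me invoke the submartingale version of the optional stopping theorem to obtain
\[
\E[Q_\tau \mid S_0 = M, Z_1 < 0] \;\geq\; \E[Q_1 \mid S_0 = M, Z_1 < 0] \;=\; M + \E[Z_1 \mid Z_1 < 0].
\]

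Next, I would decompose the left-hand side according to which boundary is hit. Letting $p := \Pr(Q_\tau \geq M \mid S_0 = M, Z_1 < 0)$, the law of total expectation gives
\[
\E[Q_\tau \mid S_0 = M, Z_1 < 0] \;=\; p\,\E[Q_\tau \mid S_0 = M, Z_1 < 0, Q_\tau \geq M] + (1-p)\,\E[Q_\tau \mid S_0 = M, Z_1 < 0, Q_\tau \leq 0].
\]
Because $Q_\tau = Q_{\tau-1} + Z_\tau$ and $Q_{\tau-1} \in (0, M)$ by definition of $\tau$, I have the deterministic bounds $Q_\tau \leq M + Z_{\max}$ on $\{Q_\tau \geq M\}$ and $Q_\tau \leq 0$ on $\{Q_\tau \leq 0\}$. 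Substituting these upper bounds yields
\[
\E[Q_\tau \mid S_0 = M, Z_1 < 0] \;\leq\; p\,(M + Z_{\max}).
\]

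Finally, combining the two displays and solving for $p$ gives
\[
p \;\geq\; \frac{M + \E[Z_1 \mid Z_1 < 0]}{M + Z_{\max}},
\]
which is exactly the stated inequality. The main subtlety will be verifying the regularity conditions for the submartingale optional stopping theorem, but the bounded-increment and finite expected hitting time properties established earlier in the proof of \Cref{thm:Inefficient_for_fix_non-0_mean} (via \Cref{lem:policy_constants}) make this routine; the rest is a one-line arithmetic rearrangement. Note also that the bound is only useful once $M \geq -\E[Z_1 \mid Z_1 < 0]$, i.e., for sufficiently large $M$, which is the regime of interest.
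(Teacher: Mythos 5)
Your proposal is correct and follows essentially the same route as the paper: apply the optional stopping theorem to the submartingale $Q_t$ conditioned on $\{Z_1<0\}$, decompose $\E[Q_\tau \mid \cdot]$ over the two exit events, and use the almost-sure bounds $Q_\tau \le M+Z_{\max}$ on $\{Q_\tau \ge M\}$ and $Q_\tau \le 0$ otherwise. Your rearrangement is in fact marginally cleaner (you bound both conditional expectations upfront rather than solving for $p$ and then invoking monotonicity in $\E[Q_\tau \mid \cdot, Q_\tau < M]$ as the paper does), but the argument is the same.
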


Plugging this bound into \eqref{eq:pM-static-bad}, we have:
\begin{align*}
p_M \geq 1-\Pr(Z_1 < 0)\left(1-\frac{M+\E[Z_1 \mid Z_1 < 0]}{M+Z_{\max}}\right) = 1-\Pr(Z_1 < 0)\left(\frac{Z_{\max}-\E[Z_1 \mid Z_1 < 0]}{M+Z_{\max}}\right) = 1-O(M^{-1}). 
\end{align*}

Having established that $H_M = \Omega(1)$ in this case, by \Cref{Cor:Main_expression_for_inefficiency}, $\overline{W} = \Omega(1)$. Using the fact that $\overline{W} = O(1)$ trivially, we obtain $\overline{W} = \Theta(1)$.

\medskip

\noindent {\bf Case 2: There do not exist $\epsilon > 0$, $\delta > 0$ such that $\Pr(Z_t \geq \epsilon) \geq \delta$ and $\Pr(Z_t \leq - \epsilon) \geq \delta$.} Since $\E[Z_t] > 0$, it must be that $\Pr(Z_t > 0) = 1$, with $\Pr(Z_t \geq \epsilon) \geq \delta$, for some $\epsilon > 0$, $\delta > 0$. By the same arguments as those used in the proof of \Cref{lem:policy_constants}, $\tau_M(S_0) < \infty$  almost surely. Moreover, once $S_t = M$, $S_{t'} = M$ for all $t' > t$, since $Z_t > 0$ almost surely. 
This gives us that:
\[
H_M = \limsup_{T \rightarrow \infty} \frac{1}{T} \sum_{t=1}^T \Ind{S_t = M} = \limsup_{T \rightarrow \infty} \frac{T - \tau_M(S_0)}{T} = 1.
\]
Thus we have $\overline{W} = \Theta(1)$ by  \Cref{Cor:Main_expression_for_inefficiency}.
\end{proof}

\subsubsection{Auxiliary Lemmas}\label{apx:static-bad}

\BoundStaticBad*

\begin{proof}
It is easy to see that, conditioned on $Z_1 < 0$, $Q_t$ is a submartingale. Hence, by the optional stopping theorem for submartingales:
\begin{align}\label{eq:opt-stop-submart}
\E[Q_\tau \mid Z_1 < 0, S_0 = M] \geq M + \E[Z_1 \mid Z_1 < 0].
\end{align}
Moreover:
\begin{align*}
\E[Q_\tau \mid Z_1 < 0, S_0 = M] &= \E[Q_\tau \mid Z_1 < 0, S_0 = M, Q_\tau \geq M] \Pr(Q_\tau \geq M \mid Z_1 < 0, S_0 = M)\\
&\quad + \E[Q_\tau \mid Z_1 < 0, S_0 = M, Q_\tau < M] \left(1-\Pr(Q_\tau {\geq} M \mid Z_1 < 0, S_0 = M)\right)
\end{align*}
Solving the above for $\Pr(Q_\tau \geq M \mid Z_1 < 0, S_0 = M)$, we obtain:
\begin{align*}
&\Pr(Q_\tau \geq M \mid Z_1 < 0, S_0 = M)\left(\E[Q_\tau \mid Z_1 < 0, S_0 = M, Q_\tau \geq M]-\E[Q_\tau \mid Z_1 < 0, S_0 = M, Q_\tau < M]\right)\\ &= \E[Q_\tau \mid Z_1 < 0, S_0 = M]-\E[Q_\tau \mid Z_1 < 0, S_0 = M, Q_\tau < M] \\
&\geq M + \E[Z_1 \mid Z_1 < 0] - \E[Q_\tau \mid Z_1 < 0, S_0 = M, Q_\tau < M],
\end{align*}
where the inequality follows from \eqref{eq:opt-stop-submart}. Using the fact that $\E[Q_\tau \mid Z_1 < 0, S_0 = M, Q_\tau \geq M] \leq M + Z_{\max}$, we have:
\begin{align*}
\Pr(Q_\tau \geq M \mid Z_1 < 0, S_0 = M) &\geq \frac{M + \E[Z_1 \mid Z_1 < 0] - \E[Q_\tau \mid Z_1 < 0, S_0 = M, Q_\tau < M]}{M+Z_{\max}-\E[Q_\tau \mid Z_1 < 0, S_0 = M, Q_\tau < M]}\\
&\geq \frac{M + \E[Z_1 \mid Z_1 < 0]}{M+Z_{\max}},
\end{align*}
since $\E[Q_\tau \mid Z_1 < 0, S_0 = M, Q_\tau < M] = \E[Q_\tau \mid Z_1 < 0, S_0 = M, Q_\tau \leq 0 ] \leq 0$ and the right-hand side of the above is decreasing in $\E[Q_\tau \mid Z_1 < 0, S_0 = M, Q_\tau < M]$.
\end{proof}


\section{Omitted Proofs from \Cref{ssec:Bang-Bang_Policy}}
\label{apx:bang-bang-full}

\subsection{\Cref{thm:bang-bang_inefficiency_bound}: Sufficient Conditions}\label{apx:delta-cond}

\begin{proposition}\label{prop:delta-cond}
Fix any $c > 0$ such that
\[\min\left\{\Pr\left(B_t \geq \Bmean+c\right), \Pr\left(B_t \leq \Bmean-c\right), \Pr\left(N_t \geq \Nmean+c/2\right), \Pr\left(N_t \leq \Nmean-c/2\right)\right\} > 0.\] Then, $0 < \Delta \leq \frac{c\left(\Bmean+\Nmean\right)}{\Nmean(\Nmean+c/2)}$ satisfies \cref{eq:thm-cond} in \Cref{thm:bang-bang_inefficiency_bound}.
\end{proposition}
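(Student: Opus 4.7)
The plan is to verify \cref{eq:thm-cond} by exhibiting, for each of its two inequalities, a joint tail event for $(B_t, N_t)$ that has positive probability and on which the required inequality holds with a uniform deterministic margin. Independence of $B_t$ and $N_t$ will let us factor the joint probabilities, and the four one-sided tail assumptions on $c$ will guarantee each factor is positive.

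First, I would introduce
\[
E_+ \;=\; \{B_t \geq \Bmean + c\} \cap \{N_t \leq \Nmean - c/2\}, \qquad
E_- \;=\; \{B_t \leq \Bmean - c\} \cap \{N_t \geq \Nmean + c/2\},
\]
and set $\delta := \min\{\Pr(E_+), \Pr(E_-)\}$, which is strictly positive by independence and the hypothesis on $c$. The event $E_+$ will witness the first inequality in \cref{eq:thm-cond}, and $E_-$ the second.

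Next, on $E_+$, I would lower-bound $B_t - N_t(\Bmean/\Nmean + \Delta/2)$ by substituting $B_t \mapsto \Bmean + c$ and $N_t \mapsto \Nmean - c/2$; this is a valid lower bound because the coefficient of $N_t$ in the expression is negative. The analogous upper bound on $E_-$ for $B_t - N_t(\Bmean/\Nmean - \Delta/2)$ uses $B_t \mapsto \Bmean - c$ and $N_t \mapsto \Nmean + c/2$, where the sign check that $-(\Bmean/\Nmean - \Delta/2) < 0$ reduces to $\Delta < 2\Bmean/\Nmean$, which holds by the theorem's hypothesis. Both resulting expressions are affine in $\Delta$ and monotone in the direction that makes the constraint worst at the endpoint.

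The final step is an algebraic verification at $\Delta^{\ast} := c(\Bmean + \Nmean)/[\Nmean(\Nmean + c/2)]$. Using the identity $\Nmean + c/2 = (2\Nmean + c)/2$ to clear denominators, the $E_-$ upper bound evaluated at $\Delta^{\ast}$ telescopes exactly to $-c/2$; substituting $\Delta^{\ast}$ into the $E_+$ lower bound yields an even larger positive margin (a short calculation gives $c(2\Nmean^2 + 3\Nmean c + 2\Bmean c)/[2\Nmean(2\Nmean + c)] \geq c/2$). Taking $\epsilon := c/2$ therefore witnesses \cref{eq:thm-cond} for every $\Delta \in (0, \Delta^{\ast}]$. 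The main obstacle is merely bookkeeping the arithmetic; the specific form $c(\Bmean + \Nmean)/[\Nmean(\Nmean + c/2)]$ in the statement is chosen precisely so that the cancellation in the $E_-$ computation is exact, leaving no further cleverness required.
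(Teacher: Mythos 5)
Your proposal is correct and follows essentially the same route as the paper: both intersect the two one-sided tail events for $B_t$ and $N_t$ (using independence for positivity of the joint probability), substitute the extreme values into the affine expression, and take $\epsilon = c/2$, with the threshold on $\Delta$ determined by making the $E_-$ computation tight. Your write-up is in fact slightly more careful than the paper's (which contains a sign typo in its second display), explicitly checking the sign condition $\Delta < 2\Bmean/\Nmean$ and the monotonicity in $\Delta$.
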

 
\begin{proof}
Let $\delta = \min\left\{\Pr\left(B_t \geq \Bmean+c\right), \Pr\left(B_t \leq \Bmean-c\right), \Pr\left(N_t \geq \Nmean+c/2\right), \Pr\left(N_t \leq \Nmean-c/2\right)\right\} > 0$. Then, with probability at least $\delta^2$, we have:
\begin{align*}
B_t - N_t\left(\frac{\Bmean}{\Nmean}+\Delta/2\right) \geq \Bmean +c -(\Nmean-c/2)\left(\frac{\Bmean}{\Nmean}+\Delta/2\right) \geq c/2,
\end{align*}
for $\Delta \leq \frac{c\left(\Bmean+\Nmean\right)}{\Nmean(\Nmean+c/2)}$.
Similarly, with probability at least $\delta^2$:
\begin{align*}
B_t - N_t\left(\frac{\Bmean}{\Nmean}+\Delta/2\right) \leq \Bmean -c -(\Nmean+c/2)\left(\frac{\Bmean}{\Nmean}-\Delta/2\right) \geq -c/2,
\end{align*}
for $\Delta \leq \frac{c\left(\Bmean+\Nmean\right)}{\Nmean(\Nmean+c/2)}$.

Letting $\epsilon = c/2$ completes the proof of the claim.
\end{proof}

\subsection{Auxiliary Lemmas for Proof of \Cref{thm:bang-bang_inefficiency_bound}}
\label{apx:bang-bang-aux}

\HitProbabilityLemma*

\begin{proof}
We first show \cref{eq:hit-prob-from-M}. 
Conditioning on whether $Z_1 < 0$, we have:
\begin{align}\label{eq:first-lem}
\Pr\left(Q_{\Ttauhigh} < \frac{M}{2} \mid S_0 = M\right) &= \Pr\left(Q_{\Ttauhigh} < \frac{M}{2} \mid S_0 = M, Z_1 < 0\right) \Pr(Z_1 < 0 \mid S_0 = M) \notag \\ &\quad + \Pr\left(Q_{\Ttauhigh} < \frac{M}{2} \mid S_0 = M, Z_1 \geq 0\right)\left(1-\Pr(Z_1 < 0 \mid S_0 = M)\right) \notag \\
&= \Pr\left(Q_{\Ttauhigh} < \frac{M}{2} \mid S_0 = M, Z_1 < 0\right) \Pr(Z_1 < 0 \mid S_0 = M),
\end{align}
where the second equality follows from the fact that, if $Z_1 \geq 0$, $Q_1 \geq M$; therefore $\Ttauhigh = 1$ with $Q_{\Ttauhigh} \geq M$.

Consider now the event $\{Z_1 < 0\}$. Under the \ALG policy, $A_t = \frac{\Bmean}{\Nmean} + \Delta$ for all $s \leq  \Ttauhigh$. Therefore, \mbox{$\E[Z_s] = \Bmean - (\frac{\Bmean}{\Nmean} + \Delta)\Nmean =  -\Delta \cdot \Nmean$}, and $Q_t = \sum_{s=1}^t Z_s + M$ is a supermartingale for all $t \leq \Ttauhigh$. Therefore, by the optional stopping theorem for supermartingales,
\begin{align}\label{eq:opt-stop-supermart}
\E[Q_{\Ttauhigh} \mid Z_1 < 0, S_0 = M] \leq \E[Q_1 \mid Z_1 < 0, S_0 = M] = M + \E[Z_1 \mid Z_1 < 0, S_0 = M].
\end{align}
Moreover, by the law of total expectation,
\begin{align*}
\E[Q_{\Ttauhigh} \mid Z_1 < 0, S_0 = M] &= \E[Q_{\Ttauhigh} \mid Z_1 < 0, S_0 = M, Q_{\Ttauhigh} < M/2]\Pr\left(Q_{\Ttauhigh} < M/2 \mid Z_1 < 0, S_0 = M\right) \\& \quad+ \E[Q_{\Ttauhigh} \mid Z_1 < 0, S_0 = M, Q_{\Ttauhigh} \geq M/2]\left(1-\Pr\left(Q_{\Ttauhigh} < M/2 \mid Z_1 < 0, S_0 = M\right)\right) \\
\implies \Pr\left(Q_{\Ttauhigh} < M/2 \mid Z_1 < 0, S_0 = M\right) &= \frac{\E[Q_{\Ttauhigh} \mid Z_1 < 0, S_0 = M, Q_{\Ttauhigh} \geq M/2]-\E[Q_{\Ttauhigh} \mid Z_1 < 0, S_0 = M]}{\E[Q_{\Ttauhigh} \mid Z_1 < 0, S_0 = M, Q_{\Ttauhigh} \geq M/2]-\E[Q_{\Ttauhigh} \mid Z_1 < 0, S_0 = M, Q_{\Ttauhigh} < M/2]}.
\end{align*}

Observe that the event $Q_{\Ttauhigh} \geq M/2$ is equivalent to the event that $Q_{\Ttauhigh} \geq M$. Using the fact that the right-hand side of the above is increasing in $\E[Q_{\Ttauhigh} \mid Z_1 < 0, S_0 = M, Q_{\Ttauhigh} \geq M/2]$, and moreover $\E[Q_{\Ttauhigh} \mid Z_1 < 0, S_0 = M, Q_{\Ttauhigh} < M/2] \geq M/2-Z_{\min}$, we have:
\begin{align*}
\Pr\left(Q_{\Ttauhigh} < M/2 \mid Z_1 < 0, S_0 = M\right) &\geq \frac{M-\E[Q_{\Ttauhigh} \mid Z_1 < 0, S_0 = M]}{M-(M/2-Z_{\min})}\geq \frac{-\E[Z_1 \mid Z_1 < 0, S_0 = M]}{M/2+Z_{\min}},
\end{align*}
where the final inequality uses \eqref{eq:opt-stop-supermart}.

Plugging this back into \eqref{eq:first-lem}, we obtain:
\begin{align*}
\Pr\left(Q_{\Ttauhigh} < M/2 \mid S_0 = M\right) \geq \frac{\E[(Z_1)^- \mid S_0 = M]}{M/2+Z_{\min}} \geq C/M,
\end{align*}
for some $C > 0$, for large enough $M$, since $\E[(Z_1)^- \mid S_0 = M] \geq \epsilon \delta$. 

\medskip 

We now show \cref{eq:hit-prob-below}. Fix $S_0 = S \in [\frac{M}{2} - Z_{\min}, \frac{M}{2}]$. The following lemma will allow us to use the optional stopping theorem to bound $\Pr\left(Q_{\Ttaulow} \geq M/2 \mid S_0 = S\right)$. We defer its proof to the end of the section.

\begin{restatable}{lemma}{MeanNonZeroMartingale}  \label{lem:Mean_non-0_case_marginales}
    There exists a constant $C >0$ such that $e^{-C \Delta Q_t}$ is a super-martingale. Moreover, $e^{-C \Delta Q_t}$ has bounded increments on the event $\{\Ttaulow > t\}$.
\end{restatable}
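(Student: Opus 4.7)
The plan is to reduce both claims to a one-dimensional exponential-moment estimate for the single-step drift. On the event $\{\Ttaulow > t\}$ one has $Q_t \in (0, M/2)$, so $S_t < M/2$, and the Bang-Bang policy (with the $2\Delta$ reparameterization used in the enclosing proof of Theorem~\ref{thm:bang-bang_inefficiency_bound}) allocates $A_{t+1} = \Bmean/\Nmean - \Delta$. Hence $Z_{t+1} = B_{t+1} - N_{t+1}(\Bmean/\Nmean - \Delta)$ is independent of $\F_t$, has mean $\Nmean\Delta > 0$, and is bounded in absolute value by $R := Z_{\max} + Z_{\min}$. Since $Q_{t+1} = Q_t + Z_{t+1}$, verifying the supermartingale property reduces to finding a constant $C > 0$ (independent of $M$ and $\Delta$) such that $\E\!\left[e^{-C\Delta Z_{t+1}}\right] \le 1$.

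To produce this inequality I will apply the pointwise Taylor bound $e^{-y} \le 1 - y + \tfrac{y^2}{2}\,e^{|y|}$, which holds for all $y \in \R$ by Lagrange remainder. Substituting $y = C\Delta Z_{t+1}$, using $|Z_{t+1}| \le R$, and taking expectations yields
\begin{equation*}
\E\!\left[e^{-C\Delta Z_{t+1}}\right] \;\le\; 1 \,-\, C\Delta \cdot \Nmean\Delta \,+\, \tfrac{(C\Delta)^2}{2}\,\E[Z_{t+1}^2]\,e^{C\Delta R}.
\end{equation*}
Since $\E[Z_{t+1}^2]$ is bounded by a constant depending only on the distributional primitives, and the theorem hypothesis caps $\Delta$ above (so $C\Delta R$ is bounded for any fixed $C$), the right-hand side is at most $1$ whenever $\tfrac{C}{2}\,\E[Z_{t+1}^2]\,e^{C\Delta R} \le \Nmean$; any sufficiently small universal $C$ satisfies this condition. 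The bounded-increments claim on $\{\Ttaulow > t\}$ then follows routinely from the factorization $|e^{-C\Delta Q_{t+1}} - e^{-C\Delta Q_t}| = e^{-C\Delta Q_t}\,|e^{-C\Delta Z_{t+1}} - 1| \le e^{C\Delta R} + 1$, combined with $e^{-C\Delta Q_t} \le 1$ on $\{Q_t > 0\}$.

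The main obstacle is ensuring the constant $C$ in the supermartingale step can be chosen independently of both $M$ and $\Delta$. The two competing terms in the display above both scale as $\Delta^2$, so after dividing out the common factor the battle reduces to comparing the pure constants $C\,\Nmean$ and $\tfrac{C^2}{2}\,\E[Z_{t+1}^2]\,e^{C\Delta R}$; making $C$ small decouples these, and the $e^{C\Delta R}$ factor is controlled because $\Delta$ is bounded. This uniform choice of $C$ is precisely what powers the subsequent optional-stopping argument — applied between the starting state $S \in [M/2 - Z_{\min}, M/2)$ and the exit time $\Ttaulow$ — to deliver the $1 - e^{-\Omega(\Delta M)}$ hitting-probability bound asserted in~\eqref{eq:hit-prob-below}.
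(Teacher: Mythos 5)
Your proposal is correct and follows essentially the same route as the paper: both reduce the supermartingale property to the one-step inequality $\E[e^{-C\Delta Z}]\le 1$, exploit the drift $\E[Z]=\Nmean\Delta>0$ together with boundedness of $Z$ to show the linear term dominates the quadratic one for a universal small $C$, and then verify bounded increments on $\{\Ttaulow>t\}$. The only difference is cosmetic — you use a second-order Taylor bound with Lagrange remainder where the paper invokes Hoeffding's lemma for the centered variable, and you bound the increment via the multiplicative factorization $e^{-C\Delta Q_t}\lvert e^{-C\Delta Z_{t+1}}-1\rvert$ where the paper uses the mean value theorem; both yield the same conclusion with a constant depending only on the distributional primitives.
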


Then, by \Cref{lem:Mean_non-0_case_marginales}, $\E\left[e^{-C\Delta Q_{\Ttaulow}} \mid S_0 = S\right] \leq e^{-C\Delta S} \leq e^{-C\Delta\left(M/2-Z_{\min}\right)}$. Applying the law of total expectation on $\E\left[e^{-C\Delta Q_{\Ttaulow}}\mid S_0 = S\right]$, we obtain:
\begin{align*}
e^{-C\Delta\left(M/2-Z_{\min}\right)} &\geq \E\left[e^{-C\Delta Q_{\Ttaulow}} \mid S_0 = S, Q_{\Ttaulow} \geq M/2\right]\Pr\left( Q_{\Ttaulow} \geq M/2 \mid S_0 = S\right)\\
&\quad + \E\left[e^{-C\Delta Q_{\Ttaulow}} \mid S_0 = S, Q_{\Ttaulow} \leq 0\right]\left(1-\Pr\left( Q_{\Ttaulow} \geq M/2 \mid S_0 = S\right)\right)\\
&\geq 1 - \Pr\left( Q_{\Ttaulow} \geq M/2 \mid S_0 = S\right),
\end{align*}
where we use the fact that $\E\left[e^{-C\Delta Q_{\Ttaulow}} \mid S_0 = S, Q_{\Ttaulow} \leq 0\right] \geq 1$ for the second inequality. Solving for $\Pr\left( Q_{\Ttaulow} \geq M/2 \mid S_0 = S\right)$, we obtain:
\begin{align*}
\Pr\left( Q_{\Ttaulow} \geq M/2 \mid S_0 = S\right) \geq 1-e^{-C\Delta (M/2-Z_{\min})}.
\end{align*}
Noting that $Z_{\min} \leq M/4$ for large enough $M$, and setting $C_1 = C/4$, we obtain the result.

Finally, \cref{eq:hit-prob-above} follows almost identical lines as the derivation of \cref{eq:hit-prob-below}. In particular, one can show that there exists $C > 0$ such that $e^{C\Delta Q_t}$ is a supermartingale. Then, applying the optional stopping theorem in the same way, the lower bound on $\Pr\left(Q_{\Ttauhigh} < M/2 \mid S_0 = S\right)$ emerges. We omit this argument for brevity.
\end{proof}

\MeanNonZeroMartingale*
\begin{proof}
Fix $C > 0$, and let $Y_t = e^{-C \Delta \cdot Q_t}$. For ease of notation, we let $\lambda = C \Delta$. We have:
    \begin{align*}
    \E[Y_t-Y_{t-1} \mid \mathcal{F}_{t-1}] &= \E\left[e^{-\lambda Q_t}-e^{-\lambda Q_{t-1}} \mid \mathcal{F}_{t-1}\right] \\
    &= \E\left[e^{-\lambda Q_{t-1}}\left(e^{-\lambda \left(B_t-\left(\alpha^*-\Delta\right)N_t\right)}-1\right) \mid \mathcal{F}_{t-1}\right]\\
    &= Y_t\E\left[e^{-\lambda \left(B_t-\left(\alpha^*-\Delta\right)N_t\right)}-1\right],
    \end{align*}
    where the second equality uses the fact that, given $S_0 \in [M/2-Z_{\min}, M/2]$, for all $t < \Ttaulow$, $A_t = \alpha^*-\Delta$.

    We now argue that $\E[e^{-\lambda \left(B_t-\left(\alpha^*-\Delta\right)N_t\right)}] < 1$ for some $C > 0$, and therefore $\E[Y_t-Y_{t-1} \mid \mathcal{F}_{t-1}] \leq 0$. For ease of notation, let $X_t = B_t-\left(\alpha^*-\Delta\right)N_t$, and $\hat{X}_t = X_t-\E[X_t] = B_t-\left(\alpha^*-\Delta\right)N_t-\Delta\mu_{\NDist}$. Since $B_t$ and $N_t$ are both bounded, $\hat{X}_t \in \left[-\alpha^*N_{\max}-\Delta\mu_{\NDist},B_{\max}-\Delta\mu_{\NDist}\right]$ is sub-Gaussian. Then, by Hoeffding's lemma, for any $s \in \mathbb{R}$:
    \begin{align*}
    &\E[e^{s\hat{X}_t}] \leq \exp\left(\frac{s^2\left(\left(B_{\max}-\Delta\mu_{\NDist}\right)-\left(-\alpha^*N_{\max}-\Delta\mu_{\NDist}\right)\right)^2}{8}\right)=\exp\left(\frac{s^2\left(B_{\max}+\alpha^*N_{\max}\right)^2}{8}\right)\\
    \implies &\E[e^{sX_t}] \leq \exp\left(\frac{s^2\left(B_{\max}+\alpha^*N_{\max}\right)^2}{8}+s\E[X_t]\right) = \exp\left(\frac{s^2\left(B_{\max}+\alpha^*N_{\max}\right)^2}{8}+s\cdot \Delta\mu_{\NDist}\right).
    \end{align*}
    Letting $s = -C\Delta$, it suffices to show that there exists $C > 0$ such that 
    \begin{align*}
\exp\left(\frac{(C\Delta)^2\left(B_{\max}+\alpha^*N_{\max}\right)^2}{8}-C\Delta^2\mu_{\NDist}\right) < 1 
&\iff \frac{(C\Delta)^2\left(B_{\max}+\alpha^*N_{\max}\right)^2}{8}-C\Delta^2\mu_{\NDist} < 0 \\
&\iff C\cdot \frac{\left(B_{\max}+\alpha^*N_{\max}\right)^2}{8}-\mu_{\NDist} < 0 \\
&\iff C < \frac{8\mu_{\NDist}}{\left(B_{\max}+\alpha^*N_{\max}\right)^2}.
    \end{align*}

    We conclude by arguing that $Y_t$ has bounded increments on the event $\Ttaulow > t$. Note that, for $t < \Ttaulow$, $Q_t \in (0,M/2)$. Therefore, by the mean value theorem,
    \begin{align*}
    |e^{-C\Delta Q_t}-e^{-C\Delta Q_{t-1}}| \leq \max_{c\in (0,M/2)} C\Delta e^{-C\Delta c}  |Q_t-Q_{t-1}| \leq C\Delta\max\{Z_{\max},Z_{\min}\} < \infty.
    \end{align*}
\end{proof}

\section{Extensions}
\label{app:extensions}

\subsection{Proof of \Cref{thm:extensions}}\label{apx:extensions-proof}

\ExtensionThm*
\begin{proof}
The result follows directly from \cref{thm:bang-bang_inefficiency_bound}.  Indeed, recall that we split the warehouse into $K$ virtual stores of capacity $M/K$ and run the same \ALG policy independently on each virtual store. For any sample path, the global inefficiency in the system (whether overflow or stockouts) is no larger than the sum of the inefficiencies accrued inside the virtual stores. Hence we have that $\overline{W} \leq \sum_k \overline{W}_k$ where:
\[
\overline{W}_k = \lim \sup_{T \rightarrow \infty} \sum_{t=1}^T W_{t,k},
\]
(and similarly for $\overline{V}$).  {This reduction is conservative, but sufficient: each virtual store is now a one-dimensional system with capacity $M / K = \Theta(M)$, budget process $B_{t,k}$, and demand process $\sum_\theta N_{t,\theta}$, to which the single-resource analysis from \cref{thm:bang-bang_inefficiency_bound} applies.}  Therefore, $\overline{W}_k = e^{-\Omega(\Delta M)}$, and as a result, $\overline{W} = e^{- \Omega(\Delta M)}$. Similarly for $\overline{V}$.

The result for $\Delta = 0$ follows from an identical argument and applying \cref{thm:Inefficient_for_fix_0_mean}.
\end{proof}

\subsection{Extending \ALG to Type-Dependent Allocations}\label{apx:extensions-improvement}

We close out with showing how to incorporate type-dependent preferences in the baseline allocations. 

We define the {\em fluid Eisenberg-Gale} (EG) allocation  $\{a_\theta^{\EG}\}_{\theta\in\Theta}$ as the solution to
\begin{equation}
\label{eq:EG}
\begin{aligned}
\max_{\{a_\theta\}_{\theta\in\Theta}} \quad & \sum_{\theta\in\Theta} \mu_{\mathcal{N}, \theta} \,\log\bigl(w_\theta^\top a_\theta\bigr) \\
\text{s.t.} \quad & \sum_{\theta\in\Theta}\mu_{\mathcal{N},\theta}\, a_\theta \;\le\; \Bmean, \\
& a_\theta \;\ge\; 0,\qquad \forall \theta\in\Theta.
\end{aligned}
\end{equation}

\cref{eq:EG} represents a fluid-limit version of the classical Eisenberg-Gale program, which is known to produce allocations that are Pareto-efficient, envy-free, and proportional in offline resource allocation settings~\citep{eisenberg1961aggregation,varian1973equity,jain2010eisenberg}. In our context, we use \cref{eq:EG} to define {\em type-specific} allocations $a_{\theta,k}^{\EG}$, in contrast to the earlier baseline of $\mu_{\mathcal{B},k} / \mu_{\mathcal{N}}$, which does not account for differences in type-specific weights.

\paragraph{Modified Bang-Bang policy over virtual stores.}
Partition the warehouse into $K$ virtual stores, each with capacity $M/K$. Fix a fairness budget $\Delta>0$.
Let $S_{t-1,k}$ denote the inventory in virtual store $k$ at the start of period $t$. For every arriving type $\theta$,
\begin{equation}
\label{eq:bb-eg}
A_{t,\theta,k} \;=\;
\begin{cases}
a_{\theta,k}^{\EG} - \frac{\Delta}{2} & \text{if } S_{t-1,k} < \dfrac{M}{2K},\\[2mm]
a_{\theta,k}^{\EG} + \frac{\Delta}{2} & \text{if } S_{t-1,k} \ge \dfrac{M}{2K}.
\end{cases}
\end{equation}

It is again easy to see that the resulting algorithm satisfies $\Dfair = \Theta(\Delta)$.  Additionally, one can also show it achieves {\em counterfactual envy} (again first considered in \citet{sinclair2021sequential} as a ``regret'' benchmark against the fluid EG solution) as:
\[
\max_{t,\theta} |u(A_{t,\theta}, \theta) - u(a_{\theta}^{\EG}, \theta)| = \Theta(\Delta).
\]
We conjecture that a straightforward (albeit technical) modification to our analysis (by adjusting the `center' points in the technical results around $a_{\theta,k}^{\EG}$ vs $\Bmean / \Nmean$) shows the envy-inefficiency bounds for this modified algorithm.

\section{Numerical Experiments: Additional Details}
\label[appendix]{app:simulation_details}

\subsection{Simulation Details}
\label[appendix]{sec:experiment_details}
\paragraph{Computing Infrastructure:} The experiments were conducted on a personal computer with an Apple M2, 8-core processor and 16.0GB of RAM.

\paragraph{Experiment Setup:} Each experiment was run with $100$ iterations where the various plots and metrics were computed with respect to the mean of the various quantities.  To evaluate the performance of the algorithms on $\Deff$ we use a time horizon of $T = 10,000$ and evaluate:
\[
\Deff = \frac{1}{T} \sum_{\tau < t} h W_t + b V_t.
\]
To avoid errors in numerical precision when plotting $\log(\Deff)$ we cap all values less than $10^{-4}$ to be $10^{-4}$.

\subsection{Additional Results with Multiple Resources}
\label[appendix]{app:experiments_multi_resources}

To complement our simulation results in \cref{sec:simulations}, we include additional simulations for the model in \cref{sec:extensions} (and discussed in further details in \cref{app:extensions}) with multiple resources, multiple types of customers, and a single warehouse with a fixed capacity constraint $M$.

\begin{table}[!t]
\caption{Weights $w_k$ for the different products considered in the \textbf{Multi-FBST} experiments.  Here we use the weights taken from the historical prices used in the market mechanism to distribute food resources to food pantries across the United States~\citep{prendergast2017food}.}
\label{tab:weights}
\setlength\tabcolsep{0pt} 
\centering
\begin{tabular*}{\columnwidth }{@{\extracolsep{\fill}}rccccc}
\toprule
Resource & Cereal &  Pasta & Prepared Meals & Rice & Meat\\
\midrule
Weights (type $\theta = $ omnivore) & 3.9 & 3 & 2.8 & 2.7 & 1.9\\
Weights (type $\theta = $vegetarian) & 3.9 & 3 & .1 & 2.7 & .1\\
Weights (type $\theta = $prepared-only) & 3.9 & 3 & 2.8 & 2.7 & .1 \\
\bottomrule
\end{tabular*}
\end{table}

\begin{figure}[t]
\centering
\begin{subfigure}[t]{0.48\linewidth}
    \centering
    \includegraphics[width=\textwidth]{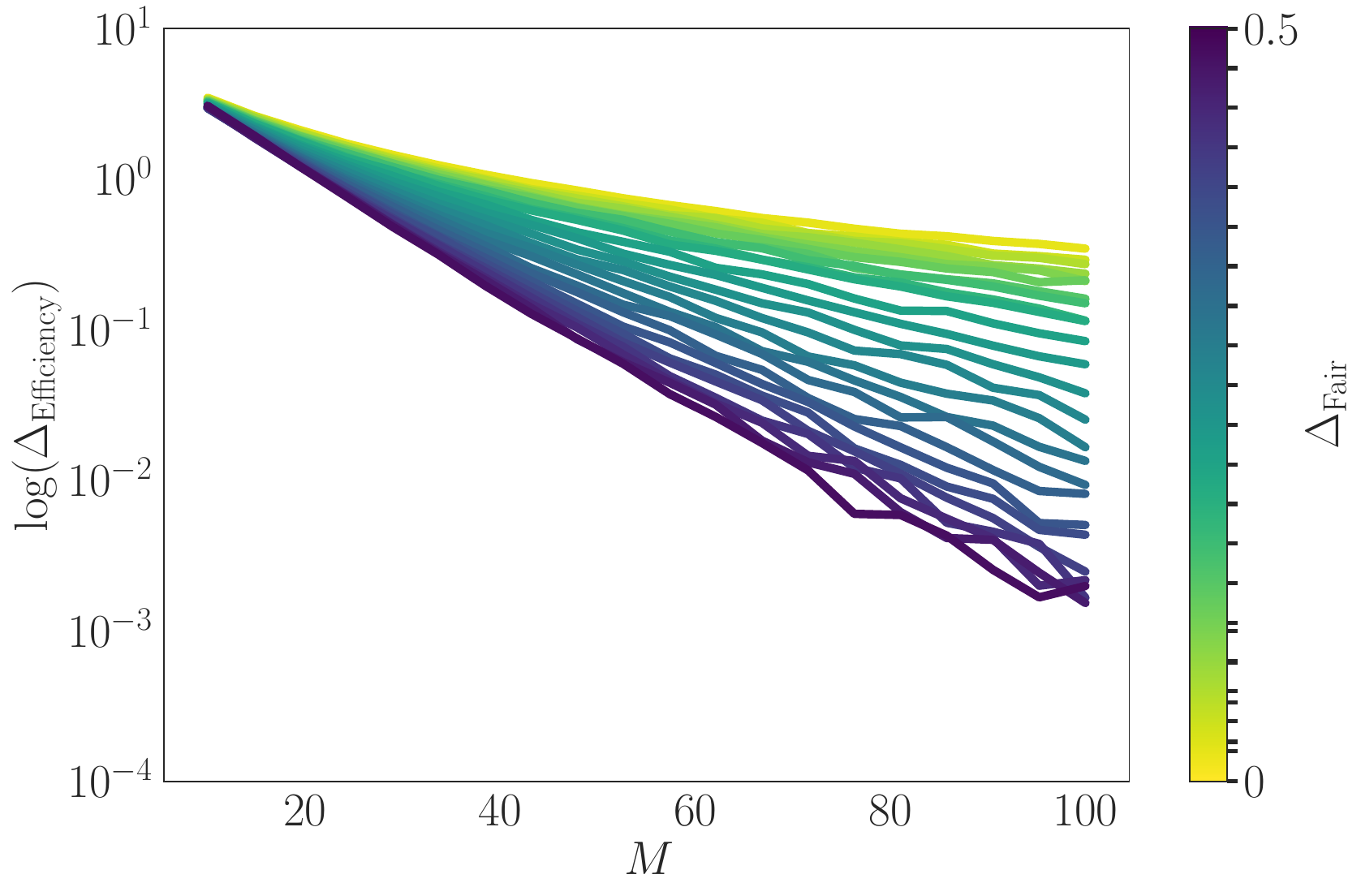}
    \caption{\small $\Deff$ vs $M$}
    \label{fig:multi_M}
\end{subfigure}
\hfill
\begin{subfigure}[t]{0.48\linewidth}
    \centering
    \includegraphics[width=\textwidth]{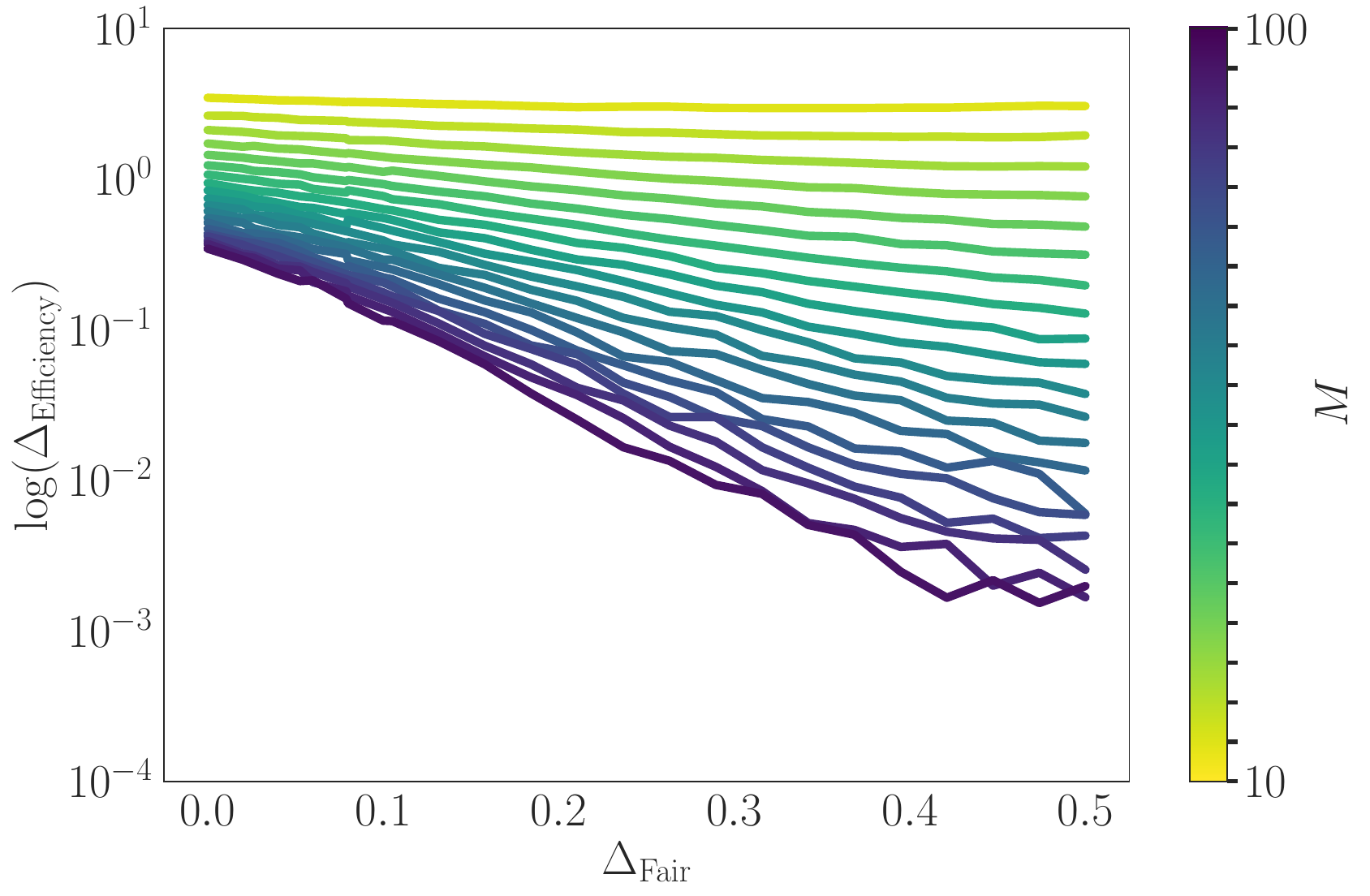}
    \caption{\small $\Deff$ vs $\Dfair$}
    \label{fig:multi_Delta}
\end{subfigure}
\caption{{Envy-inefficiency scaling with multi-resources and multi-types}: As before, we compare inefficiency on a log scale ($\log(\Deff)$, y-axis) against $\Dfair$ (y-axis) or $M$ (y-axis).  Each curve represents the performance of $\ALG(\Dfair)$ for $M \in [10, 100]$ and $\Dfair \in [0, 0.5]$.  Darker curves correspond to larger values of $M$ (\cref{fig:time_vary_delta}) or $\Dfair$ (\cref{fig:time_varying_M}). The exponential scaling, and sharp phase transition near $\Dfair=0$ are still clearly visible.}
\label{fig:multi_resource}
\vspace{-0.5em}
\end{figure}

\paragraph{Experimental Setup.} When evaluating $\Deff$ we consider the cost $h$ for overflows and $b$ for stockouts as $h = b = 1$ (results for other values of $(h,b)$ are similar and omitted as such). We test twenty different capacity constraints $M$ equally spaced in $[10,100]$ and twenty five different $\Dfair$ values equally spaced in $[0,0.5]$.

We consider the setting of five resources $K$ (corresponding to cereal, pasta, prepared meals, rice, and meat) and three types $\Theta$ (corresponding to vegetarians, omnivores, and ``prepared-food only'' individuals).  To estimate the utility functions for the different types we use the historical prices $p_k$ used in the market mechanism to distribute food resources to food pantries across the United States \citep{prendergast2017food}.  In particular, for each of the types $\theta$ we set the weight $w_{\theta, k} = p_k \Ind{\text{type } \theta \text{ uses resource } k}$, with an additional weight of $0.1$ for the latent carry-over need.  For example, the indicator is zero for the vegetarian type and the meat resource (see \cref{tab:weights} for the full table of weights).

We also follow the {\bf Truncated Normal} simulation set-up, and assume $B_{t,k} \sim \Norm(\mu_{\BDist, k}, \sigma_n^2)^+$ and $N_t \sim \Norm(\mu_{\NDist, \theta}, \sigma_b^2)^+$ for $\sigma_b = \sigma_n = 1$ and $\mu_{\BDist, k} = 5$ for all $k$, and $\mu_{\NDist} = [1.25, 1.5, 2.25]$ to reflect the portion of estimated customers with those food preferences.  We note that this is a stochastic budget replenishment extension of the simulations studied in \citet{sinclair2021sequential}.

\paragraph{Summary} The multi-resource, multi-type simulations exhibit the same qualitative envy–inefficiency trade-off as in the single-resource, single-type model.  
In \cref{fig:multi_resource} (left) we compare the performance of $\Deff$ as we vary the storage capacity $M$.  For small values of $M$, inefficiency remains high across all values of $\Dfair$ due to frequent stockouts and overflow costs. However, as $M$ increases, we observe exponential improvement in efficiency.
In \cref{fig:multi_resource} (right) we compare the performance of $\Deff$ as we vary the fairness parameter $\Dfair$.  We again see performance improvements for $\Deff$ under larger values of $\Dfair$, while holding the capacity constraints $M$ fixed.  Moreover, even with a slight relaxation of fairness ($\Dfair > 0$) yields significant efficiency improvements.  We note that on the whole, the performance of $\Deff$ is worse than in the single-resource setting, due to the fact that the algorithm is run via virtual-stores, so with the same capacity $M$ the ``effective'' capacity is reduced by $M/K$ to account for the multiple resources.

\end{APPENDICES}







\end{document}